\documentclass[11pt]{amsart}

\usepackage{amsmath,amssymb,verbatim}
\usepackage{amsthm}
\usepackage{enumerate}
\usepackage{url}
\usepackage{mathtools}
\usepackage{color}
\usepackage{multicol}

\newtheorem{theorem}{Theorem}[section]
\newtheorem{lemma}[theorem]{Lemma}
\newtheorem{corollary}[theorem]{Corollary}
\newtheorem{proposition}[theorem]{Proposition}

\newtheorem{fact}[theorem]{Fact}

\theoremstyle{definition}
\newtheorem{definition}[theorem]{Definition}

\newtheorem{remark}[theorem]{Remark}

\def\VC{\operatorname{VC}}
\def\lVC{\operatorname{\ell VC}}
\def\rVC{\operatorname{\emph{r}VC}}
\def\Stab{\operatorname{Stab}}
\def\St{\operatorname{St}}
\def\cov{\operatorname{cov}}

\def\cC{\mathcal C}
\def\cF{\mathcal F}

\def\cS{\mathcal S}

\def\Z{\mathbb Z}
\def\F{\mathbb F}

\def\T{\mathbb T}

\def\T{\textnormal{T}}
\def\U{\textnormal{U}}

\newcommand{\seq}{\subseteq}
\newcommand{\inv}{^{\text{-}1}}
\newcommand{\nv}{\text{-}}

\newcommand{\clqed}{\hfill$\dashv_{\text{\scriptsize{claim}}}$}
\newcommand{\smd}{\raisebox{.75pt}{\textrm{\scriptsize{~\!$\triangle$\!~}}}}
\newcommand{\bone}{\boldsymbol{1}}


\AtBeginDocument{%
   \def\MR#1{}
}

\title{Stabilizers and NIP arithmetic regularity}

\author{G. Conant}
\address{Department of Mathematics, Statistics, and Computer Science\\
University of Illinois Chicago
}
\email{gconant@uic.edu}

\author{C. Terry}
\address{Department of Mathematics, Statistics, and Computer Science\\
University of Illinois Chicago
}
\email{caterry@uic.edu}

\date{September 4, 2025}

\thanks{GC was partially supported by an NSF grant (DMS-2452816); CT was partially supported by an NSF CAREER Award (DMS-2115518) and a Sloan Research Fellowship.}

\begin{document}

\begin{abstract}
We give a new proof of the NIP arithmetic regularity lemma for finite groups (due to the authors and Pillay), which describes the approximate structure of ``NIP sets" in finite groups, i.e., subsets  whose  collection of left translates  has bounded VC-dimension.  Our new proof avoids  sophisticated ingredients from the model theory of NIP formulas (e.g., Borel definability and generic compact domination). The key tool is an  elaboration on an elementary lemma due to Alon, Fox, and Zhao concerning the behavior of subgroups contained in stabilizers. We adapt this lemma to arbitrary subsets of stabilizers using  technical (but elementary) maneuvers based on work of Sisask. Using another trick from Alon, Fox, and Zhao, we then give an effective proof of a related result of the first author and Pillay on finite NIP sets of bounded tripling in arbitrary groups. Along the way, we show that NIP sets satisfy a strong form of the Polynomial Bogolyubov-Ruzsa Conjecture.   
\end{abstract}

\maketitle

\section{Introduction}

\subsection{Background} Given a group $G$ and an integer $d\geq 1$, we say that a subset $A\seq G$ is \textbf{$d$-NIP} if 
if the collection of  left translates of $A$ has VC-dimension less than $d$.\footnote{VC-dimension is defined in Section \ref{sec:VC}. The initialism ``NIP" comes from the model theoretic literature; see \cite[p. 2]{CPTNIP} for details.}  
In \cite{CPTNIP}, the authors and Pillay proved the following ``structure and regularity" result for $d$-NIP subsets of finite groups. 

\begin{theorem}[Conant, Pillay, Terry \cite{CPTNIP}]\label{thm:CPTNIP}
Suppose $G$ is a finite group and $A\seq G$ is $d$-NIP. Then for any $\epsilon>0$, there is a normal subgroup $H\leq G$ of index $O_{d,\epsilon}(1)$ and a $(\delta,m)$-Bohr neighborhood $B$ in $H$ with $\delta\inv,m\leq O_{d,\epsilon}(1)$ satisfying the following properties:
\begin{enumerate}[$(i)$]
\item \textnormal{(structure)} There is a set $F\seq G$ with $|F|\leq O_{d,\epsilon}(1)$ such that
\[
|A\smd FB|<\epsilon|G|.
\]
\item \textnormal{(regularity)}There is a set $Z\seq G$ with $|Z|\leq \epsilon|G|$ such that for all $g\in G\backslash Z$, either $|gB\cap A|<\epsilon|B|$ or $|gB\backslash A|<\epsilon|B|$.
\end{enumerate}
\end{theorem}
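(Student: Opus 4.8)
The plan is to replace the model-theoretic core of \cite{CPTNIP} --- generic stable measures, Borel definability, generic compact domination --- by three elementary ingredients: Haussler's packing lemma, a Bogolyubov--Ruzsa/Sanders-type passage from dense approximate groups to Bohr neighborhoods, and a VC-shattering argument generalizing the Alon--Fox--Zhao lemma on subgroups contained in stabilizers. Fix a finite group $G$, a $d$-NIP set $A\seq G$ of density $\alpha=|A|/|G|>0$, and $\epsilon>0$; let $\mu$ be the normalized counting measure on $G$, and for $\eta>0$ put $S_\eta=\{g\in G:\mu(gA\smd A)<\eta\}$ and $T_\eta=\{g\in G:\mu(Ag\smd A)<\eta\}$, the left and right $\eta$-stabilizers of $A$. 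Each is symmetric, contains $1$, and satisfies $S_\eta S_{\eta'}\seq S_{\eta+\eta'}$ (and likewise for $T$); moreover $d$-NIP of $A$ forces the right translates of $A$ to also have bounded VC dimension (pass to the dual family and invert the ground set), so everything below applies to $T_\eta$ as well after replacing $d$ by a bounded function of $d$, and I will not belabor the left/right distinction.

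The first step is that $S_\eta$ (hence $T_\eta$) is \emph{wide}: $G$ is covered by $O_{d,\eta}(1)$ left translates of $S_\eta$, so in particular $\mu(S_\eta)\geq 1/O_{d,\eta}(1)$. This is Haussler's packing lemma in disguise. The set system $\{gA:g\in G\}$ has VC dimension $<d$, hence $\eta$-packing number $O_{d,\eta}(1)$ on $(G,\mu)$; and since $\mu$ is bi-invariant, the pseudometric $\rho(g,g')=\mu(gA\smd g'A)$ equals $\mu(g\inv g'A\smd A)$, so the $\rho$-ball of radius $\eta$ about $g$ is exactly the translate $gS_\eta$, whence a bounded $\eta$-net of $(G,\rho)$ is a bounded family of translates covering $G$. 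Feeding the (symmetrized, slightly enlarged) dense approximate group $S_\eta$ into a quantitative Bogolyubov--Ruzsa theorem --- run, following Sisask, in a form tailored to NIP sets --- produces a normal subgroup $H\trianglelefteq G$ of index $O_{d,\eta}(1)$ together with a $(\delta,m)$-Bohr neighborhood $B$ in $H$ with $\delta\inv,m\leq O_{d,\eta}(1)$ and $B\seq T_{O(\eta)}$ (hence $B\inv B\seq T_{O(\eta)}$); here $H$ absorbs the ``discrete'' part of the structure of $S_\eta$ and $B$ its ``toral'' part. On its own, the resulting fact --- that a bounded product set of $A$, in fact $A\inv A$, contains a Bohr neighborhood of bounded complexity --- is the strong form of the Polynomial Bogolyubov--Ruzsa statement for NIP sets advertised in the abstract.

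It remains to deduce $(i)$ and $(ii)$ from the inclusion $B\seq T_{O(\eta)}$. Let $F_B(g)=\mu(gB\cap A)/\mu(B)=\mathbb{E}_{b\in B}\,1_A(gb)$ be the density of $A$ in the translate $gB$; a short computation gives $\mathbb{E}_{g\in G}\,F_B(g)(1-F_B(g))=\tfrac12\,\mathbb{E}_{b,b'\in B}\,\mu(A\smd A(b\inv b'))$, which is $O(\eta)$ since $B\inv B\seq T_{O(\eta)}$. So $F_B$ is close to $\{0,1\}$-valued on average, but $(ii)$ demands this for all but an $\epsilon$-fraction of $g$ --- and here the $d$-NIP hypothesis, via the elaboration of Alon--Fox--Zhao, is essential. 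In its original form their lemma treats the case where $B$ is a \emph{subgroup}: its cosets tile $G$, the stabilizer hypothesis makes $A$ nearly invariant within each coset, and a shattering argument --- if too many cosets were mixed, suitable translates of $A$ would shatter a $d$-element set --- bounds the number of mixed cosets by $O_{d,\epsilon}(1)$. Adapting this to our Bohr neighborhood $B$ (see the obstacle below), one obtains $\mu(\{g:\epsilon\leq F_B(g)\leq 1-\epsilon\})<\epsilon$, which is the exceptional set $Z$ of $(ii)$, and for $g\notin Z$ the translate $gB$ is $\epsilon$-regular for $A$. For $(i)$, take $F$ to consist of one representative from each ``full'' translate (those with $F_B(g)>1-\epsilon$); these translates lie in a bounded union of translates of $B$ across the cosets of $H$ in $G$, so $|F|\leq O_{d,\epsilon}(1)$, and summing the regularity estimates of $(ii)$ over this cover gives $|A\smd FB|<\epsilon|G|$. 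All free parameters --- $\eta$ and the internal thresholds --- are fixed at the end in terms of $\epsilon$ and the shattering bounds, so that every output quantity is $O_{d,\epsilon}(1)$.

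The main obstacle is exactly this passage from subgroups to arbitrary subsets of stabilizers in the Alon--Fox--Zhao step. When $B$ is merely a Bohr neighborhood its translates overlap, the map $g\mapsto gB$ is many-to-one in an uncontrolled way, and the exact averaging identities available for cosets survive only as inequalities carrying error terms; the clean counting that drives the shattering bound for cosets then collapses. The remedy, following Sisask, is to manufacture an ``approximate coset partition'' of $G$ refined by $B$ and to push the shattering argument through it while tracking every slack; a further trick of Alon, Fox, and Zhao is what upgrades an a priori $O_{d,\epsilon}(1)$-bounded exceptional mass to the sharp bound $\mu(Z)<\epsilon$. Managing this bookkeeping --- so that the losses from packing, from Bogolyubov--Ruzsa, and from shattering all compose into a single $O_{d,\epsilon}(1)$ --- is where essentially all of the work lies.
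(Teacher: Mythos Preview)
Your overall architecture matches the paper: Haussler's packing lemma shows the stabilizers $S_\eta,T_\eta$ are dense, noncommutative Bogolyubov produces a Bohr neighborhood $B$ inside the (right) stabilizer, and $(i)$, $(ii)$ are then deduced from the containment $B\seq T_{O(\eta)}$ alone. But you misidentify both where the difficulty lies and what mechanism resolves it.

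For regularity $(ii)$, you already have everything you need. Your variance identity $\mathbb{E}_g\,F_B(g)(1-F_B(g))=\tfrac12\,\mathbb{E}_{b,b'}\,\mu(A\smd Ab\inv b')=O(\eta)$ is correct, and Markov's inequality gives $\mu\{g:\epsilon\le F_B(g)\le 1-\epsilon\}\le O(\eta)/\epsilon$ immediately; choosing $\eta$ of order $\epsilon^2$ finishes $(ii)$. No shattering and no further NIP input are required --- the paper's Lemma~\ref{lem:regularity} is exactly this argument via a slightly different double-count, stated for an \emph{arbitrary} finite $A$ with no VC hypothesis. The $d$-NIP assumption is spent entirely at the Haussler step and never reappears.

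The real work is structure $(i)$, and here your proposed mechanism --- a shattering argument generalizing Alon--Fox--Zhao --- is the wrong tool. (The original AFZ Stabilizer Lemma for subgroups $H\seq S_\eta$ is itself pure counting, no shattering; see the paper's Lemma~\ref{lem:AFZH}.) The paper's Lemma~\ref{lem:structure} instead exploits a \emph{left/right} interplay: with $X=\St^\ell_{\epsilon^2/162}(A)$ and $S=\St^r_{\epsilon\nu/9}(A)$ where $\nu\le|X|/|A|$, one sets $A'=\{a\in A:|Xa\backslash A|<\tfrac{\epsilon}{9}|X|\}$ and proves by two elementary double-counts that $|A'|\ge(1-O(\epsilon))|A|$ and $|A'S|\le(1+O(\epsilon))|A|$, so \emph{any} $D$ with $A'\seq D\seq A'S$ satisfies $|A\smd D|<\epsilon|A|$. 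One then takes $D=FB$ with $F$ obtained by Ruzsa-covering $A'$ via a half-Bohr set $C$ satisfying $C^2\seq B\seq S$. Your ``sum regularity over full translates'' cannot control $|FB\backslash A|$ because the translates overlap, as you yourself note; the sandwich $A'\seq FB\seq A'S$ is the missing idea, and it requires no approximate tiling and no shattering bookkeeping at all.
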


In Theorem \ref{thm:CPTNIP}, a $(\delta,m)$-Bohr neighborhood is a special kind of algebraically structured set obtained as the preimage of the open identity neighborhood of radius $\delta$ in the $m$-dimensional real torus. See Definition \ref{def:Bohrdef} and Remark \ref{rem:Bohrdef} for further details. We also  note that the main result of \cite{CPTNIP} actually asserts a  stronger form of $(i)$; see  Remark \ref{rem:NIPAR}$(3)$. 

Theorem \ref{thm:CPTNIP} is one of several closely related tame arithmetic regularity results proved  in a relatively short period of time. This began with a paper of the second author and Wolf \cite{TeWo} on ``stable" subsets of $\F_p^n$, which compares to Green's \cite{GreenSLAG} general arithmetic regularity lemma in $\F_2^n$ in direct analogy to how the Malliaris-Shelah \cite{MaShStab} regularity lemma for stable graphs compares to Szemer\'{e}di's \cite{SzemRL}   regularity lemma for all finite graphs. An ineffective generalization of \cite{TeWo} to all finite groups was obtained shortly after by the authors and Pillay \cite{CPT} (effective  results were later obtained in \cite{TeWo2} and \cite{CoQSAR}).

After the first results on stable sets in \cite{TeWo} and \cite{CPT}, three papers on NIP sets in finite groups were written in quick succession. First, Alon, Fox, and Zhao \cite{AFZ} proved an  arithmetic regularity result for NIP sets in finite abelian groups of bounded exponent. This was followed by work of Sisask \cite{SisNIP} on NIP sets in arbitrary finite abelian groups, and then finally Theorem \ref{thm:CPTNIP} above. 
None of these  results entirely subsumes the other due to various features arising from the different settings. The most significant tension  is that the results in \cite{AFZ} and \cite{SisNIP} for  abelian groups provide explicit and highly efficient bounds, whereas the bounds in Theorem \ref{thm:CPTNIP} are  ineffective due to the use of model-theoretic methods and an ultraproduct construction. Some progress on this issue was made in \cite{CoBogo} where the first author gave a ``99\% effective" generalization of Alon, Fox, Zhao \cite{AFZ} to arbitrary finite groups of bounded exponent (this will be  further explained in Section \ref{sec:methods}).

\subsection{Overview}
Our primary motivation is to further the progress toward an effective proof of Theorem \ref{thm:CPTNIP}. In this pursuit, we prove two main results, which we describe informally below (precise statements appear in the main body of the paper). 

\subsection*{Theorem \ref{thm:NIPAR}} We provide a new proof of Theorem \ref{thm:CPTNIP}. Although our bounds are still ineffective, we significantly simplify the model-theoretic machinery used in \cite{CPTNIP}, which was largely developed in a prequel paper \cite{CPpfNIP} by the first author and Pillay on NIP formulas in pseudofinite groups. That paper culminated in a ``generic compact domination" result which, as shown in \cite{CPTNIP}, corresponds precisely to NIP arithmetic regularity. Generic compact domination originated in  work of Hrushovski, Peterzil, and Pillay \cite{HPP} on NIP theories (which was motivated by conjectures about groups definable in o-minimal theories). In order to make this concept meaningful for NIP formulas, the work in \cite{CPpfNIP} used two  results of Simon \cite{SimRC,SimGCD} requiring sophisticated machinery from model theory. Our proof will not require any of these tools. In fact, the only nontrivial result about VC-dimension that we will need is Haussler's Packing Lemma (Lemma \ref{lem:HPL}). Moreover, the only use of model theory in our proof will be hidden in an application of the noncommutative version of Bogolyubov's Lemma (discussed below). This will also be the only source of ineffectiveness in our proof.  

\subsection*{Theorem \ref{thm:trip}} We give a new proof a result of the first author and Pillay \cite{CPfrNIP}, which adapts Theorem \ref{thm:CPTNIP} to the setting of finite NIP sets of bounded tripling in infinite groups. The methods in \cite{CPfrNIP} are based on the same connection to generic compact domination discussed above, but with additional complexity arising from the need to work in a locally compact setting. Our new proof again avoids these tools, and instead pushes all uses of model theory into an application of Breuillard-Green-Tao \cite{BGT} (which replaces the noncommutative Bogolyubov's Lemma used in Theorem \ref{thm:NIPAR}).  Most importantly, whereas the results in \cite{CPfrNIP} are ineffective, here we obtain bounds with a \emph{polynomial} dependence on the tripling constant and the error parameter. This improvement relies crucially on generalizations of ideas of Alon, Fox, and Zhao \cite{AFZ}, which we discuss below. Along the way, we prove a strong form of the Polynomial Bogolyubov-Ruzsa Conjecture for the special case of $d$-NIP sets (see Corollary \ref{cor:PBR}). When restricted to finite groups, Theorem \ref{thm:trip} also establishes a  variation of Theorem \ref{thm:NIPAR} with polynomial bounds in $1/\epsilon$, but with  Bohr neighborhoods replaced   by more complicated objects called ``coset nilprogressions" (see Corollary \ref{cor:CPTeff}). 

In Section \ref{sec:abelian}, we will refine Theorem \ref{thm:trip} in the special case of abelian groups.  We will also revisit the bounded exponent analogue of Theorem \ref{thm:trip} in Section \ref{sec:BE}.

\subsection{Methods}\label{sec:methods}
The heart of our arguments is a  strategy  based on the work of Alon, Fox, and Zhao \cite{AFZ} on NIP sets in finite abelian groups of bounded exponent. The general setting of this strategy is as follows.  
Let $G$ be any finite group, and fix $A\seq G$. Given $\epsilon>0$, define the ``stabilizer" $S_\epsilon=\{x\in G:|Ax\smd A|\leq\epsilon|G|\}$ (i.e., $S_\epsilon=\Stab^r_{\epsilon|G|}(A)$ in the notation of  Definition \ref{def:Stab}). The strategy now consists of three main ingredients. The first   is the following key insight from \cite{AFZ}, which says that the set $A$ can be well approximated by a union of cosets of any subgroup contained in $S_\epsilon$.\medskip

\noindent\textbf{Stabilizer Lemma.} Suppose $H$ is a subgroup of $G$ contained in $S_\epsilon$. Then there is some $F\seq G$ such that $|A\smd FH|\leq \epsilon|G|$.\medskip

\noindent This lemma yields a structure statement for $A$ in terms of $H$. A suitable regularity statement follows from the proof, but this is not made explicit in \cite{AFZ} (see Lemma \ref{lem:AFZH}, where we revisit this result in a more general setting). 

The second ingredient is the fact that the stabilizer $S_{\epsilon}$ is dense in $G$ when the set $A$ is $d$-NIP.  In particular, if $A$ is $d$-NIP then a result from VC-theory called Haussler's Packing Lemma immediately implies $|S_\epsilon|\geq(\epsilon/30)^d|G|$. This is another key observation in \cite{AFZ}, although the connection between NIP and large stabilizers is central in model theory as well (e.g., \cite[Lemma 6.3]{HPP}, \cite[Proposition 3.2]{CPpfNIP}; see also Remark \ref{rem:HPL}). 

The third ingredient is the fact that $S_\epsilon$ contains large subgroups.  For this, we move to the full setting of \cite{AFZ} where $G$ is abelian of  exponent $r$. In this case, a result of Ruzsa \cite{RuzBE} (typically called Bogolyubov's Lemma) says that if $S\seq G$ is nonempty, then the sumset $2S-2S$ contains a subgroup $H$ of index depending only on $r$ and $|G|/|S|$. Applying this to $S=S_{\epsilon/4}$, and importing the lower bound on $|S_{\epsilon/4}|$  from Haussler, we obtain $H\seq 4S_{\epsilon/4}\seq S_\epsilon$ of index $O_{d,r,\epsilon}(1)$. With the Stabilizer Lemma, this altogether yields Theorem \ref{thm:CPTNIP} for finite abelian groups of bounded exponent, but with the Bohr neighborhood $B$ \emph{equal} to the subgroup $H$ (as noted in \cite[Section 5]{AFZ}, this extra feature is not possible without the bound on the exponent).

Further, Alon, Fox, and Zhao  use a clever trick to obtain a polynomial bound in $1/\epsilon$. This requires the Bogolyubov-Ruzsa Lemma, which  is only known to hold with quasi-polynomial bounds. However, their argument is delicately tailored so that this does not  affect the overall polynomial dependence on $\epsilon$. In \cite{CoBogo}, the first author used Hrushovski's \cite{HruAG} non-commutative analogue of Bogolyubov-Ruzsa for groups of bounded exponent  to execute the same trick in the nonabelian case.

We can now summarize the main ideas of our work. The proof of the Stabilizer Lemma in \cite{AFZ} is  short and elementary, but very much relies on the partition structure coming from cosets of $H$. In Section \ref{sec:lemmas}, we will prove a suitable adaptation  applicable to arbitrary subsets of stabilizers (see Lemma \ref{lem:structure}). Here our arguments borrow heavily from Sisask's \cite{SisNIP} results on NIP sets in finite abelian groups. So while the work in Section \ref{sec:lemmas} represents the main technical obstacle required for our results, we stress that several key ideas are already present in Sisask's work, albeit embedded in  the abelian setting and  Fourier analytic techniques therein. Lemma \ref{lem:structure} also requires some subtle care that only arises in  nonabelian groups. 

With Section \ref{sec:lemmas} in hand, we can then approach our main results using the same Alon-Fox-Zhao strategy of finding well-structured  sets inside of stabilizers. 
For Theorem \ref{thm:NIPAR}, the key tool  is a noncommutative version of Bogolyubov's Lemma for arbitrary finite groups, proved by the first author in \cite{CoBogo}. However, there is no known effective proof of this result, which is the only reason our bounds in Theorem \ref{thm:NIPAR} remain ineffective. For Theorem \ref{thm:trip}, we combine a similar stabilizer strategy with  a generalization of the trick from \cite{AFZ} alluded to above (see Lemma \ref{lem:AFZ}). But this requires a noncommutative version of the Bogolyubov-Ruzsa Lemma, which is provided by the  Breuillard-Green-Tao \cite{BGT} structure theorem for approximate groups.

\subsection*{Outline} Section \ref{sec:pre} contains all of the preliminaries needed for our proofs. This includes background on VC-dimension and Haussler's Packing Lemma, as well as details on several ``Bogolyubov-Ruzsa-type" results from arithmetic combinatorics. In Section \ref{sec:lemmas}, we prove the  technical lemmas on stabilizers mentioned above. We then prove Theorem \ref{thm:NIPAR} in Section \ref{sec:ineff}, and compare and contrast our work with  \cite{CPTNIP}. Section \ref{sec:eff} contains the proof of Theorem \ref{thm:trip} and the related results discussed above. Finally, in Section \ref{sec:BE}, we revisit the bounded exponent case and prove some additional results. 

\subsection*{Acknowledgments} The authors thank Anand Pillay and Julia Wolf for their helpful comments on a preliminary draft of this paper. Thanks also to Tom Sanders for pointing us to the work in \cite{LovReg}.

\section{Preliminaries}\label{sec:pre}

Given the length of this section, we note that our first main result (Theorem \ref{thm:NIPAR}) only requires the preliminaries in Subsections \ref{sec:not}, \ref{sec:VC}, and \ref{sec:BL}. The remaining material will not be needed until Section \ref{sec:eff}, where we prove the second main result (Theorem \ref{thm:trip}) and related applications.

\subsection{Notation and basic definitions.}\label{sec:not}
Throughout the paper, $\log$ and $\exp$ denote the base $2$ logarithm and exponential. We will restrict the variable $\epsilon$ to the interval $(0,1)$, regardless of whether results are still true for larger values. (This is done to avoid irrelevant calculations.)

Let $G$ be a group. Given $A,B\seq G$, we let  $AB=\{ab:a\in A,~b\in B\}$ and $A\inv=\{a\inv:a\in A\}$. For $n\geq 1$, we inductively define $A^n$ by setting $A^1=A$ and $A^{n+1}=A^nA$. Following the conventions of \cite[Definition 2.1$(i)$]{BGT}, we call a set $A\seq G$ \textbf{symmetric} if $A=A\inv$ \emph{and} $A$ contains the identity of $G$.

When $G$ is abelian, we will switch to additive notation. For example, we write $\nv A$ rather than $A\inv$, $nA$ rather than $A^n$, $A+B$ rather than $AB$, etc.

\begin{definition}[covering bound]
Given nonempty sets $A,B\seq G$ and a real number $N\geq 1$, we write $\cov(A:B)\leq N$ to mean that $A\seq FB$ for some $F\seq A$ with $|F|\leq N$.
\end{definition}

The next result is a standard exercise (see \cite[Lemma 5.1]{BGT}\footnote{This reference inadvertently omits the necessary assumption that $B$ is symmetric. The result holds without symmetry if $B^2$ is replaced by $BB\inv$; see \cite[Lemma 3.6]{TaoPSE}.}).

\begin{lemma}[Ruzsa's Covering Lemma]\label{lem:RCL}
Suppose $A,B\seq G$ are  finite sets with $B$ symmetric. Then $\cov(A:B^2)\leq |AB|/|B|$.
\end{lemma}

Now we define left and right stabilizers of finite sets in groups.

\begin{definition}\label{def:Stab}
Let $A\seq G$ be finite. Given a real number $N>0$, define
\begin{align*}
\Stab^\ell_{N}(A) &= \{x\in G:|xA\smd A|\leq N\},\text{ and}\\
\Stab^r_{N}(A) &= \{x\in G:|Ax\smd A|\leq N\}.
\end{align*}
We will often set $N=\epsilon|A|$ for some $\epsilon\in(0,1)$. Thus for the sake of brevity, given $\epsilon\in(0,1)$ and $\bullet\in \{r,\ell\}$, we let
\[
\St^\bullet_\epsilon(A)=\Stab^\bullet_{\epsilon|A|}(A).
\]
\end{definition}

\begin{proposition}\label{prop:Stab}
Fix a finite set $A\seq G$ and $M,N>0$.
\begin{enumerate}[$(a)$]
\item $\Stab^\bullet_{N}(A)$ is symmetric (where $\bullet\in\{r,\ell\}$).
\item For any  $M>0$, $\Stab^\bullet_{M}(A)\Stab^\bullet_{N}(A)\seq \Stab^\bullet_{M+N}(A)$ (where $\bullet\in\{r,\ell\}$).
\item $\Stab^\ell_N(A)=\Stab^r_N(A\inv)$. Thus if  $\epsilon\in(0,1)$ then  $\St^\ell_\epsilon(A)=\St^r_\epsilon(A\inv)$.
\item If $N<2|A|$ then $\Stab^\ell_{N}(A)\seq AA\inv$ and $\Stab^r_{N}(A)\seq A\inv A$. Thus if $A\neq\emptyset$ and $\epsilon\in(0,1)$ then $\St^\ell_\epsilon(A)\seq AA\inv$ and $\St^r_\epsilon(A)\seq A\inv A$.
\end{enumerate}
\end{proposition}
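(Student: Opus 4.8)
The plan is to dispatch all four parts with elementary symmetric-difference manipulations, relying throughout on two observations. First, if $f\colon G\to G$ is a bijection then $f(X\smd Y)=f(X)\smd f(Y)$, hence $|X\smd Y|=|f(X)\smd f(Y)|$; we will use this for left translations, right translations, and inversion. Second, symmetric difference satisfies the triangle inequality $|X\smd Z|\leq|X\smd Y|+|Y\smd Z|$. I will only write the arguments for the left-hand versions, since the right-hand versions follow by the same reasoning with left translations replaced by right translations.

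For $(a)$, the identity belongs to $\Stab^\ell_N(A)$ because $|A\smd A|=0\leq N$; and if $x\in\Stab^\ell_N(A)$, then applying the bijection $y\mapsto x\inv y$ gives $|x\inv A\smd A|=|A\smd xA|=|xA\smd A|\leq N$, so $x\inv\in\Stab^\ell_N(A)$. For $(b)$, given $x\in\Stab^\ell_M(A)$ and $y\in\Stab^\ell_N(A)$, the triangle inequality gives $|xyA\smd A|\leq|xyA\smd xA|+|xA\smd A|$; applying $z\mapsto x\inv z$ to the first term shows it equals $|yA\smd A|\leq N$, while the second is at most $M$, so $xy\in\Stab^\ell_{M+N}(A)$.

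For $(c)$, applying the inversion bijection and using $(xA)\inv=A\inv x\inv$ gives $|xA\smd A|=|A\inv x\inv\smd A\inv|$, so $x\in\Stab^\ell_N(A)$ if and only if $x\inv\in\Stab^r_N(A\inv)$; since $\Stab^r_N(A\inv)$ is symmetric by $(a)$, this is equivalent to $x\in\Stab^r_N(A\inv)$, yielding $\Stab^\ell_N(A)=\Stab^r_N(A\inv)$. The assertion about $\St$ then follows because $|A\inv|=|A|$. For $(d)$, suppose $x\in\Stab^\ell_N(A)$ with $N<2|A|$. Since $|xA|=|A|$, we have $|xA\smd A|=2|A|-2|xA\cap A|$, so $|xA\cap A|>0$; choosing $a,a'\in A$ with $xa=a'$ gives $x=a'a\inv\in AA\inv$. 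The claim about $\St^\ell_\epsilon(A)$ holds because $\epsilon|A|<2|A|$ whenever $A\neq\emptyset$.

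None of the four parts presents a genuine obstacle; the only points requiring a moment of care are computing $(xA)\inv=A\inv x\inv$ correctly in $(c)$, so that the inversion argument lands in the \emph{right} stabilizer rather than the left one, and invoking the symmetry from $(a)$ to pass between $x$ and $x\inv$ there. Everything else is a direct application of the two observations above together with the cardinality identity used in $(d)$.
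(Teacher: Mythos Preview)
Your proof is correct and follows essentially the same approach as the paper's: both use translation-invariance of cardinality for $(a)$, the triangle inequality for symmetric difference for $(b)$, inversion combined with $(a)$ for $(c)$, and the observation that $|xA\smd A|<2|A|$ forces $xA\cap A\neq\emptyset$ for $(d)$. Your write-up is in fact slightly more complete, since you explicitly verify that the identity lies in $\Stab^\ell_N(A)$ (required by the paper's convention that ``symmetric'' includes containing the identity) and note $|A\inv|=|A|$ for the $\St$ claim in $(c)$.
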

\begin{proof}
Part $(a)$ follows from the fact that for any $x\in G$, $|xA\smd A|=|A\smd x\inv A|$.  Part $(b)$ follows from the fact that  for any $x,y\in G$, 
$$
|xyA\smd A|=|yA\smd x\inv A|\leq  |yA\smd A|+|A\smd x\inv A|= |yA\smd A|+|xA\smd A|.
$$
Part $(c)$ follows from part $(a)$ and the fact that for any $x\in G$, $|xA\smd A|=|A\inv x\inv\smd A\inv |$ (since $(xA\smd A)\inv=A\inv x\inv\smd A\inv$). 

For part $(d)$, first suppose $x\in \Stab^\ell_N(A)$. Then $|xA\smd A|\leq N<2|A|$, which implies $xA\cap A\neq\emptyset$ (otherwise $|xA\smd A|=|xA\cup A|=2|A|$). So there are $a,b\in A$ such that $a=xb$, i.e., $x\in AA\inv$. A similar argument shows $\Stab^r_N(A)\seq A\inv A$.
\end{proof}

\subsection{VC-dimension in groups}\label{sec:VC}

Let $X$ be a set. A \textbf{set system on $X$} is a family $\cF$ of subsets of $X$. We say that a set system $\cF$ (on $X$) \textbf{shatters} a subset $A\seq X$ if $\mathcal{P}(A)=\{A\cap S:S\in\cF\}$. The \textbf{VC-dimension of $\cF$}, denoted $\VC(\cF)$, is the maximum cardinality of a finite subset of $X$ shattered by $\cF$ (or $\VC(\cF)=\infty$ if $\cF$ shatters arbitrarily large finite subsets of $X$). 

\begin{lemma}[Haussler's Packing Lemma \cite{HaussPL}]\label{lem:HPL}
Let $\cF$ be a set system on a finite set $X$ with $\VC(\cF)=d$. Fix $\epsilon\in (0,1)$ and suppose $\cS\seq\cF$ is such that $|A\smd B|>\epsilon|X|$ for all distinct $A,B\in\cS$. Then $|\cS|\leq (30/\epsilon)^d$.\footnote{The bound in \cite{HaussPL} is actually $e(d+1)(2e/\epsilon)^d$, which is less than $(30/\epsilon)^d$ assuming $d\geq 1$. On the other hand, note that if $d=0$ then $|\cF|\leq 1$.}
\end{lemma}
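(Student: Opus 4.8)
The plan is to prove Haussler's lemma by the standard probabilistic double-counting argument, and then to indicate the extra idea (due to Haussler) that is needed in order to reach the clean constant. First I would dispose of the degenerate case $d=0$: in that case $\cF$ has at most one element, since any two distinct members of $\cF$ differ in some point $x$ and hence shatter $\{x\}$; so $|\cS|\le 1=(30/\epsilon)^0$. From now on assume $d\ge 1$.

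Write $n=|X|$ and $k=|\cS|$, and let $s\ge d$ be a parameter to be chosen. Draw a subset $Y\seq X$ with $|Y|=s$ uniformly at random and consider the traces $A\cap Y$ for $A\in\cS$. Call an unordered pair $A\neq B$ in $\cS$ \emph{collapsing} if $A\cap Y=B\cap Y$, and let $P$ count the collapsing pairs. For the upper bound: a fixed pair is collapsing exactly when $Y$ misses $A\smd B$, and since $|A\smd B|>\epsilon n$ a routine estimate on binomial coefficients gives $\Pr[Y\cap(A\smd B)=\emptyset]\le(1-\epsilon)^s$, whence $\mathbb E[P]\le\binom{k}{2}(1-\epsilon)^s$. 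For the lower bound: the restriction $\{A\cap Y:A\in\cS\}$ still has VC-dimension at most $d$, so by the Sauer--Shelah lemma it has at most $\Phi_d(s):=\sum_{i=0}^{d}\binom{s}{i}$ distinct members; grouping the $k$ sets of $\cS$ by their trace (with multiplicities $c_T$, so $\sum_T c_T=k$ over at most $\Phi_d(s)$ traces) and applying convexity of $x\mapsto\binom{x}{2}$ gives $P=\sum_T\binom{c_T}{2}\ge\frac{k^2}{2\Phi_d(s)}-\frac{k}{2}$, deterministically.

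Comparing the two estimates yields $k\bigl(\Phi_d(s)^{-1}-(1-\epsilon)^s\bigr)\le 1$, so once $s$ is large enough that $2\Phi_d(s)(1-\epsilon)^s\le 1$ one gets $k\le 2\Phi_d(s)$; taking $s$ of order $\epsilon^{-1}\log(1/\epsilon)$ and bounding $\Phi_d(s)\le(es/d)^d$ then gives a bound of the shape $|\cS|\le(C\epsilon^{-1}\log(1/\epsilon))^d$ for an absolute constant $C$. The main obstacle is that this argument loses a spurious $\log(1/\epsilon)$ factor: to reach the sharp $(30/\epsilon)^d$ one cannot afford the wasteful union bound above, and must instead track how the number of distinct traces increases as points are added to $Y$ one at a time, controlling each increment via the one-inclusion-graph inequality for VC classes (a set system of VC-dimension at most $d$ on a finite ground set has a one-inclusion graph with at most $d$ times as many edges as vertices) and balancing this growth rate against the $\epsilon$-separation hypothesis, with the trace-multiplicities $c_T$ carried through the computation. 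This more delicate bookkeeping is exactly Haussler's argument, and it delivers the stated bound.
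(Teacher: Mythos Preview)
The paper does not give its own proof of this lemma: it is stated as a cited result from Haussler's paper, and Remark~\ref{rem:HPL} merely notes that weaker bounds (of the form $O_d((1/\epsilon)^{(1+o_d(1))d})$, or Lov\'asz--Szegedy's $(80d/\epsilon^{20})^d$) are available by more elementary means. So there is no in-paper argument to compare your proposal against.

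That said, your sketch is correct and accurately reflects the known proof landscape. The random-sampling/Sauer--Shelah argument you give first is exactly the elementary route alluded to in Remark~\ref{rem:HPL}, and it does produce a bound of the shape $(C\epsilon^{-1}\log(1/\epsilon))^d$ with the spurious logarithmic loss. You then correctly identify the genuine obstacle (the union bound over pairs is too wasteful) and the additional ingredient Haussler uses to remove it, namely the one-inclusion-graph edge bound combined with a one-point-at-a-time growth analysis. You do not actually carry out that refinement, so strictly speaking your proposal is a correct proof of a weaker statement together with an accurate pointer to how the sharp constant is obtained, rather than a full proof of the lemma as stated.
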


\begin{remark}\label{rem:HPL}
Haussler's Packing Lemma is the \emph{only} fact about VC-dimension needed for the proofs of our main results (via Proposition \ref{prop:HPL} below). This is worth emphasizing in light of the heavy machinery from model theory used in the proof of Theorem \ref{thm:CPTNIP} (recall the discussion of Theorem \ref{thm:NIPAR} in the introduction). That being said, the proof of Haussler's result is rather complicated. So it is also worth noting that one can obtain this lemma with slightly weaker bounds using other means. For example, Lovasz and Szegedy \cite{LovSzeg} give a short proof with the bound $(80d/\epsilon^{20})^d$ using only the VC-Theorem and the Sauer-Shelah Lemma. In fact, a bound of the form $O_d((1/\epsilon)^{(1+o_d(1))d})$ can be obtained just from the Sauer-Shelah Lemma (which has an elementary proof). This argument is sketched in the discussion after \cite[Theorem 2.1]{Moran-Yeh} (see also \cite[Remark 5.21]{CGH2}). 
\end{remark}

We now define some specific set systems in groups.

\begin{definition}
Let $G$ be a group and fix  subsets $A,B\seq G$. 
\begin{enumerate}[$(1)$]
\item Define $\cF^\ell_B(A)=\{xA:x\in B\}$ and $\cF^r_B(A)=\{Ax:x\in B\}$. (Observe that $\cF_B^{\ell}(A)$ and $\cF_B^r(A)$ can be viewed as set systems on $BA$ and $AB$, respectively.)
\item Given $\bullet\in\{\ell,r\}$, set $\VC^\bullet_B(A)=\VC(\cF^\bullet_B(A))$.
\end{enumerate}
\end{definition}

The following is a basic exercise.

\begin{proposition}\label{prop:VCinv}
For any group $G$ and $A,B\seq G$,  $\VC^\ell_B(A)=\VC^r_{B\inv}(A\inv)$.
\end{proposition}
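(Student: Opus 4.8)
The plan is to exhibit an explicit bijection between the set systems $\cF^\ell_B(A)$ and $\cF^r_{B\inv}(A\inv)$ that respects the shattering relation, so that shattered subsets correspond to shattered subsets of the same cardinality. The natural candidate for the underlying bijection is the inversion map $\iota\colon G\to G$, $\iota(g)=g\inv$, which restricts to a bijection $BA\to A\inv B\inv$ between the ambient ground sets (recalling that $\cF^\ell_B(A)$ lives on $BA$ and $\cF^r_{B\inv}(A\inv)$ lives on $A\inv B\inv$). The key algebraic identity is $(xA)\inv = A\inv x\inv$, so that $\iota$ sends the member $xA$ of $\cF^\ell_B(A)$ (for $x\in B$) to the set $A\inv x\inv$, which is the member $A\inv y$ of $\cF^r_{B\inv}(A\inv)$ for $y=x\inv\in B\inv$. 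Since $x\mapsto x\inv$ is a bijection $B\to B\inv$, this gives a bijection $\cF^\ell_B(A)\to\cF^r_{B\inv}(A\inv)$, and it is induced by the ground-set bijection $\iota$ in the sense that $\iota(S)=\iota[S]$ for each member $S$.

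With this in place, the main step is the routine observation that if $\iota\colon X\to X'$ is a bijection of ground sets carrying a set system $\cF$ on $X$ to a set system $\cF'$ on $X'$ (meaning $\cF'=\{\iota[S]:S\in\cF\}$), then for any $Y\seq X$, $\cF$ shatters $Y$ if and only if $\cF'$ shatters $\iota[Y]$; this is immediate from $\iota[Y]\cap\iota[S]=\iota[Y\cap S]$ together with the injectivity of $\iota$, which shows $\{Y\cap S:S\in\cF\}=\mathcal P(Y)$ precisely when $\{\iota[Y]\cap S':S'\in\cF'\}=\mathcal P(\iota[Y])$. Since $\iota$ preserves cardinality, the maximum size of a shattered finite subset is the same on both sides, giving $\VC^\ell_B(A)=\VC(\cF^\ell_B(A))=\VC(\cF^r_{B\inv}(A\inv))=\VC^r_{B\inv}(A\inv)$.

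I do not expect any genuine obstacle here — this is the "basic exercise" the paper flags it as. The only point requiring a modicum of care is bookkeeping about which ground set each set system is regarded as living on, and checking that $\iota$ indeed restricts to a bijection between $BA$ and $A\inv B\inv$ and that under this restriction the images of the members of $\cF^\ell_B(A)$ are exactly the members of $\cF^r_{B\inv}(A\inv)$; once that is verified, the shattering correspondence is purely formal. An even more economical write-up would simply note that, for any finite $Y\seq BA$, $\cF^\ell_B(A)$ shatters $Y$ iff $\cF^r_{B\inv}(A\inv)$ shatters $Y\inv$, citing the identity $(xA)\inv=A\inv x\inv$ and $|Y\inv|=|Y|$, and conclude by taking suprema over shattered sets.
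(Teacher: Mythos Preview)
Your proposal is correct and is exactly the natural argument one would expect: the paper does not actually give a proof of this proposition, simply labeling it ``a basic exercise,'' and your inversion-map argument is the canonical way to carry out that exercise.
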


\begin{remark}\label{rem:VCvariants}
For our main results, the relevant dimensions associated to a single set $A\seq G$ will be $\VC_A(A)$ and $\VC_{A\inv}(A)$. This differs from many other sources  (e.g., \cite{AFZ,CoBogo,CoQSAR,CPfrNIP,CPTNIP}), which focus on $\VC^\ell_G(A)$ and/or $\VC^r_G(A)$. In \cite{SisNIP}, Sisask defines yet another variation that we denote $\dim_{\bullet\!\VC}(A)$
for $\bullet\in\{\ell,r\}$ (see Definition \ref{def:SVC}). In the appendix, we will examine the relationships between these various notions. 
The brief summary is that one can establish uniform bounds (which are at worst double-exponential) between any two values from the following set: 
\[
\{\VC^\bullet_G(A),~\VC^\bullet_{A\inv}(A),~\dim_{\bullet\!\VC}(A):\bullet\in\{r,\ell\}\}.
\]
However, while $\VC^\bullet_A(A)\leq \VC^\bullet_G(A)$, we have been unable to determine whether $\VC^\bullet_G(A)$ (or any value in the above set) can be uniformly bounded above  by some function of $\VC^\bullet_A(A)$. We have not even been able to find a uniform comparison between $\VC^\ell_A(A)$ and $\VC^r_A(A)$.
\end{remark}

Next we show that in the presence of bounded VC-dimension, stabilizers are  large. This is a direct consequence of Haussler's Packing Lemma, and the argument is essentially the same as that used by Alon, Fox, and Zhao \cite[Lemma 2.2]{AFZ}  (see also  \cite[Corollary 2.7$(b)$]{CoQSAR}). We have formulated the result to include some further  generality and to explicitly state additional features that arise from the proof.

\begin{proposition}\label{prop:HPL}
Let $G$ be a  group. Fix nonempty finite sets  $A,B\seq G$, a real number $N>0$, and some $\epsilon\in (0,1)$.
\begin{enumerate}[$(a)$]
\item If $d=\VC^\ell_B(A)$ and $\epsilon\leq N/|BA|$ then 
\[
\cov(B:\Stab^\ell_{N}(A)\cap B\inv B)\leq (30/\epsilon)^{d}.
\]
\item If $d=\VC^r_B(A)$ and $\epsilon\leq N/|AB|$ then 
\[
\cov(B\inv:\Stab^r_{N}(A)\cap BB\inv)\leq (30/\epsilon)^{d}.
\]
\end{enumerate}
\end{proposition}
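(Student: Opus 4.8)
The plan is to prove both parts simultaneously via Proposition \ref{prop:VCinv} and Proposition \ref{prop:Stab}$(c)$, then reduce to a single application of Haussler's Packing Lemma. More precisely, part $(b)$ follows from part $(a)$ applied to $A\inv$ and $B\inv$ in place of $A$ and $B$: we have $\VC^r_B(A) = \VC^\ell_{B\inv}(A\inv)$, and $\Stab^r_N(A) = \Stab^\ell_N(A\inv)$, while $|AB| = |(AB)\inv| = |B\inv A\inv|$, so the hypothesis $\epsilon \le N/|AB|$ translates exactly to $\epsilon \le N/|B\inv A\inv|$; and $BB\inv = (B\inv)\inv(B\inv)$. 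So I would prove $(a)$ and then deduce $(b)$ in one line.

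For part $(a)$, the strategy is the standard greedy/maximal-packing argument. Set $H = \Stab^\ell_N(A)\cap B\inv B$ and pick $F\seq B$ maximal with the property that the left translates $\{xA : x\in F\}$ are pairwise ``far apart'' in the set system sense, i.e., $|xA \smd yA| > N$ for all distinct $x,y\in F$. Viewing $\cF^\ell_B(A)$ as a set system on $BA$ (which has size $|BA|$) with VC-dimension $d$, and noting that $N \ge \epsilon|BA|$ by hypothesis, Haussler's Packing Lemma (Lemma \ref{lem:HPL}) gives $|F| \le (30/\epsilon)^d$. The key point is then that $F$ covers $B$ by translates of $H$: given any $b\in B$, by maximality there is $x\in F$ with $|xA \smd bA| \le N$. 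Now $|xA\smd bA| = |x(A\smd x\inv b A)| = |A \smd x\inv b A|$, so $x\inv b \in \Stab^\ell_N(A)$. Also $x\inv b \in B\inv B$ since $x,b\in B$. Hence $x\inv b \in H$, i.e., $b \in xH \seq FH$. This establishes $B \seq FH$ with $|F|\le (30/\epsilon)^d$ and $F\seq B$, which is exactly $\cov(B:\Stab^\ell_N(A)\cap B\inv B) \le (30/\epsilon)^d$.

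I do not anticipate a serious obstacle here — this is a short argument. The only point requiring mild care is making sure the translate-by-$x$ trick correctly converts the symmetric-difference condition on translates $xA, bA$ into membership of $x\inv b$ in the stabilizer, and remembering that the stabilizer we land in is the \emph{left} stabilizer $\Stab^\ell_N(A)$ (consistent with the superscript $\ell$), intersected with $B\inv B$ so that the covering set $F$ can be taken inside $B$ and the cosets are cosets of a subset of $B\inv B$ as claimed. One should also double-check the edge case $d=0$: then $\cF^\ell_B(A)$ has at most one element, $|F|\le 1 \le (30/\epsilon)^0$, and the argument goes through unchanged. The hypothesis $\epsilon \le N/|BA|$ is used precisely to invoke Lemma \ref{lem:HPL} with parameter $\epsilon$, since that lemma needs the separation threshold ($N$ here) to be at least $\epsilon$ times the size of the ground set ($|BA|$).
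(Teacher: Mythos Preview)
Your proof is correct and is essentially identical to the paper's own proof: both take a maximal $N$-separated subset of $B$, bound its size via Haussler's Packing Lemma applied to $\cF^\ell_B(A)$ on the ground set $BA$, and use maximality to cover $B$ by translates of $\Stab^\ell_N(A)\cap B\inv B$; part $(b)$ is likewise deduced from $(a)$ by replacing $A,B$ with $A\inv,B\inv$.
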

\begin{proof}
We prove part $(a)$. Part $(b)$ can then be obtained via a similar argument, or by applying  $(a)$ to $B\inv$ and $A\inv$. 

Recall we can view $\cF^\ell_B(A)$ as a set system on the finite set $BA$. Call $E\seq B$ \emph{separated} if $|xA\smd yA|>N$ for all distinct $x,y\in E$. Let $E\seq B$ be a separated set of maximal size. Since $N\geq\epsilon|BA|$, we have $|E|\leq (30/\epsilon)^{d}$ by Haussler's Packing Lemma. Now fix $x\in B$. By maximality, there is some $y\in E$ such that $|xA\smd yA|\leq N$, i.e., $|y\inv xA\smd A|\leq N$, i.e., $y\inv x\in \Stab^\ell_{N}(A)$. Note also that $y\inv x\in B\inv B$. Hence $x\in y(\Stab^\ell_{N}(A)\cap B\inv B)$. This shows $B\seq E(\Stab^\ell_{N}(A)\cap B\inv B)$. 
\end{proof}

As implied by the previous discussion, for our results we will only need the following weaker formulation  of a special case of Proposition \ref{prop:HPL}  (which uses the $\St^\bullet_\epsilon(A)$ notation from Definition \ref{def:Stab}).

\begin{corollary}\label{cor:HPL}
Let $G$ be a group and fix nonempty finite sets  $A,B\seq G$.
\begin{enumerate}[$(a)$]
\item If $\VC^\ell_B(A)\leq d$ and $\epsilon\in (0,1)$,  then $\cov(B:\St^\ell_\epsilon(A))\leq (30|BA|/\epsilon|A|)^d$.
\item If $\VC^r_B(A)\leq d$ and $\epsilon\in (0,1)$, then $\cov(B\inv:\St^r_\epsilon(A))\leq (30|AB|/\epsilon|A|)^d$.
\end{enumerate}
\end{corollary}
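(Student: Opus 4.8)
The plan is to derive Corollary \ref{cor:HPL} directly from Proposition \ref{prop:HPL} by specializing the real parameter $N$ and absorbing the intersection with $B\inv B$ (resp. $BB\inv$) into a coarser cover. I will prove part $(a)$; part $(b)$ follows identically, or by applying $(a)$ to the pair $(B\inv, A\inv)$ together with Proposition \ref{prop:VCinv} and Proposition \ref{prop:Stab}$(c)$.

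First I would set $N = \epsilon|A|$, so that by definition $\Stab^\ell_N(A) = \St^\ell_\epsilon(A)$. To apply Proposition \ref{prop:HPL}$(a)$ I need a parameter $\epsilon'\in(0,1)$ with $\epsilon' \leq N/|BA| = \epsilon|A|/|BA|$; the natural choice is $\epsilon' = \epsilon|A|/|BA|$ itself. One should check this lies in $(0,1)$: since $A,B$ are nonempty and finite we have $1 \le |A| \le |BA|$ (as $bA \seq BA$ for any fixed $b\in B$), so $\epsilon' \le \epsilon < 1$, and $\epsilon'>0$ since $\epsilon,|A|>0$. Then Proposition \ref{prop:HPL}$(a)$ gives $\cov(B:\St^\ell_\epsilon(A)\cap B\inv B) \le (30/\epsilon')^d = (30|BA|/\epsilon|A|)^d$.

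It remains to pass from a cover by translates of $\St^\ell_\epsilon(A)\cap B\inv B$ to a cover by translates of the larger set $\St^\ell_\epsilon(A)$. This is immediate and monotone: if $B \seq F(\St^\ell_\epsilon(A)\cap B\inv B)$ with $F\seq B$ and $|F|\le (30|BA|/\epsilon|A|)^d$, then since $\St^\ell_\epsilon(A)\cap B\inv B \seq \St^\ell_\epsilon(A)$ we get $B \seq F\,\St^\ell_\epsilon(A)$ with the same witnessing set $F$, so $\cov(B:\St^\ell_\epsilon(A)) \le (30|BA|/\epsilon|A|)^d$ as claimed. I would note that part $(b)$ is handled symmetrically: apply Proposition \ref{prop:HPL}$(b)$ with $N=\epsilon|A|$ and $\epsilon' = \epsilon|A|/|AB| \in (0,1)$, obtaining $\cov(B\inv:\St^r_\epsilon(A)\cap BB\inv)\le (30|AB|/\epsilon|A|)^d$, and then enlarge $\St^r_\epsilon(A)\cap BB\inv$ to $\St^r_\epsilon(A)$.

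There is really no substantive obstacle here — the corollary is a bookkeeping consequence of the proposition. The only points requiring any attention are the verification that the rescaled parameter $\epsilon' \le N/|BA|$ indeed lands in $(0,1)$ (which uses $|A|\le|BA|$ for nonempty finite $B$) and the observation that dropping the $\cap B\inv B$ constraint only weakens the covering statement. If anything, the mild subtlety worth flagging is that the definition $\St^\bullet_\epsilon(A)=\Stab^\bullet_{\epsilon|A|}(A)$ is keyed to $|A|$ rather than $|BA|$, which is exactly why the factor $|BA|/|A|$ (resp. $|AB|/|A|$) appears inside the base of the exponential.
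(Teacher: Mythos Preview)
Your proof is correct and follows exactly the route the paper intends: the corollary is stated as an immediate weakening of Proposition \ref{prop:HPL}, obtained by taking $N=\epsilon|A|$, choosing the threshold parameter to be $\epsilon|A|/|BA|$, and dropping the intersection with $B\inv B$. The only small point you leave implicit is that the corollary assumes $\VC^\ell_B(A)\leq d$ rather than equality, but since $30|BA|/\epsilon|A|>1$ the bound is monotone in $d$ and this is harmless.
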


\subsection{Bogolyubov's Lemma in finite groups}\label{sec:BL}

We first recall a suitable notion of Bohr neighborhoods in noncommutative groups. In the following definition, $\U(m)$ denotes the complex unitary group of degree $m$,  and $\T(m)$ denotes the subgroup of diagonal matrices (so $\T(m)$ is isomorphic to the $m$-dimensional torus $(S^1)^m$). We equip $\U(m)$ with the metric induced by the operator norm (which then restricts to the product of complex distance metric on $\T(m)$).

\begin{definition}\label{def:Bohrdef}
Let $G$ be a group. Given  $\delta\in (0,1)$ and $m,n\in \mathbb{Z}^{\geq 1}$, a \textbf{$(\delta,m,n)$-Bohr neighborhood in $G$} is a subset $B\seq G$  of the form $B=\tau\inv(U\cap K)$ where:
\begin{enumerate}[\hspace{5pt}$\ast$]
\item $\tau\colon G\to \U(m)$ is a group homomorphism, 
\item $U$ is the open identity  neighborhood in $\U(m)$ of radius $\delta$, and
\item $K$ is a normal subgroup of $\tau(G)$ with $[\tau(G):K]=n$ and $K\seq\T(m)$. 
\end{enumerate}
We further say that $B$ has \textbf{complexity $c$} if $\max\{\delta\inv,m,n\}\leq c$.
\end{definition}

\begin{remark}\label{rem:Bohrdef}
In the commutative setting, one typically only considers $(\delta,m,1)$-Bohr neighborhoods (called $(\delta,m)$-Bohr neighborhoods in \cite{CPTNIP}\footnote{This is slightly inaccurate since in \cite{CPTNIP}, $\T(n)$ is given the product of the arclength metric on $S^1$ (rather than the complex distance metric). Thus  a $(\delta,m,1)$-Bohr neighborhood here is actually a $(\delta',m)$-Bohr neighborhood in \cite{CPT}, where $\delta'$ depends uniformly only $\delta$.}). In this case, the homomorphism $\tau$ maps to $\T(m)$, and so the ambient unitary group plays no role. On the other hand, the results in \cite{CPTNIP} for nonabelian groups involve $(\delta,m,1)$-Bohr neighborhoods in a normal subgroup $H$ of index $n$. The previous definition captures this since a $(\delta,m,n)$-Bohr neighborhood in $G$ is a $(\delta,m,1)$-Bohr neighborhood the normal subgroup $H=\tau\inv(K)$, which has index at most $n$. However, it is not explicitly evident from the setup in \cite{CPTNIP} that the map from $H$ to $\T(m)$  is the restriction of a unitary representation of $G$. For this reason, the above definition of $(\delta,m,n)$-Bohr neighborhood was formulated in \cite[Definition 4.3]{CP-AVSAR}. 
\end{remark}

Next we recall some basic properties of Bohr neighborhoods.

\begin{fact}\label{fact:Bohr}
Let $B$ be a $(\delta,m,n)$-Bohr neighborhood in a  group $G$.
\begin{enumerate}[$(a)$]
\item $B$ is symmetric.
\item There is a $(\delta/2,m,n)$-Bohr neighborhood $C$ in $G$ such that $C^2\seq B$.
\item $\cov(G:B)\leq n\lceil 2\pi/\delta\rceil^m$. 
\end{enumerate}
\end{fact}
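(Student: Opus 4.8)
The plan is to treat the three parts separately: $(a)$ and $(b)$ are short computations with the operator norm, while $(c)$ carries the real content. Write $B=\tau\inv(U\cap K)$. For $(a)$, the identity of $G$ maps to $I_m$, which lies in $U$ (it is at distance $0<\delta$ from itself) and in the subgroup $K$, so $e_G\in B$; and for any $x\in G$, left multiplication by the unitary $\tau(x)\inv$ is an isometry for the operator norm, so $\|\tau(x)\inv-I_m\|=\|\tau(x)\inv(I_m-\tau(x))\|=\|\tau(x)-I_m\|$, whence $\tau(x)\in U\iff\tau(x\inv)\in U$; since $K$ is a subgroup the analogous equivalence holds for $K$, so $x\in B\iff x\inv\in B$. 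For $(b)$, put $C=\tau\inv(U'\cap K)$, where $U'$ is the open identity neighborhood of radius $\delta/2$ in $\U(m)$; this is a $(\delta/2,m,n)$-Bohr neighborhood with the same $\tau$ and $K$, and if $x,y\in C$ then $\tau(xy)=\tau(x)\tau(y)\in K$ while, again using that left multiplication by $\tau(x)$ is an isometry, $\|\tau(xy)-I_m\|\leq\|\tau(x)(\tau(y)-I_m)\|+\|\tau(x)-I_m\|=\|\tau(y)-I_m\|+\|\tau(x)-I_m\|<\delta$, so $xy\in B$ and hence $C^2\seq B$.

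For $(c)$, set $V=U\cap K$ and $N=\lceil 2\pi/\delta\rceil$. Since $\ker\tau\seq B$ and $B$ is a union of left cosets of $\ker\tau$, it suffices to produce $E\seq\tau(G)$ with $|E|\leq nN^m$ and $\tau(G)\seq EV$, and then lift one preimage of each element of $E$ into $G$ (if $\tau(g)=ev$ with $e\in E$, $v\in V$, and $\tau(f_e)=e$, then $\tau(f_e\inv g)=v\in V$, so $g\in f_eB$). Decomposing $\tau(G)$ into its $n$ left cosets of $K$, it is enough to cover $K$ by at most $N^m$ left translates, by elements of $K$, of $V$. This is where the hypothesis $K\seq\T(m)\cong(S^1)^m$ enters: partition each circle factor into the $N$ half-open arcs of angular length $2\pi/N\leq\delta$, obtaining a partition of $\T(m)$ into $N^m$ boxes such that, for any two points $u,u'$ in a common box, $u\inv u'$ is a diagonal unitary with entries $e^{i\phi}$, $|\phi|<2\pi/N$, hence $\|u\inv u'-I_m\|=\max_j2|\sin(\phi_j/2)|<\delta$, i.e. $u\inv u'\in U$. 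Choosing one element of $K$ in each box that meets $K$ gives $F_0\seq K$ with $|F_0|\leq N^m$, and for $k\in K$ the chosen point $k'$ in its box satisfies $k'\inv k\in U\cap K=V$, so $K\seq F_0V$. Assembling the reductions yields $\cov(G:B)\leq nN^m=n\lceil 2\pi/\delta\rceil^m$; nothing beyond elementary manipulations is required.

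The step I expect to be the crux is pinning down the constant in $(c)$. The naive route — cover $\T(m)$ by open balls of radius $\delta$ and then invoke a Ruzsa-type covering argument (Lemma \ref{lem:RCL}) — contracts the radius by a factor of two and so only yields $\lceil 4\pi/\delta\rceil^m$; the remedy is to replace balls by the half-open box partition of the torus, engineered precisely so that the ratio of two points in a single box already lands in $U$ rather than in the doubled neighborhood. Everything else is bookkeeping: keeping the covering sets successively inside $K$, inside $\tau(G)$, and then lifted into $G$, and observing that passing to $G/\ker\tau$ leaves the covering number unchanged because $B\supseteq\ker\tau$ is a union of left cosets of $\ker\tau$.
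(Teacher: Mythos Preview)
Your argument is correct. The paper itself does not give a self-contained proof of this fact: it declares $(a)$ and $(b)$ ``clear'' and for $(c)$ cites standard references (Green, Tao--Vu, \cite{CPTNIP}, and \cite{CHP} for the infinite case), so your elementary box-partition argument on $\T(m)$ is precisely the kind of proof those references contain, and your treatment of the lift through $\ker\tau$ and the coset decomposition of $\tau(G)$ modulo $K$ handles the general (possibly infinite) $G$ cleanly.
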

\begin{proof}
Parts $(a)$ and $(b)$ are clear. Part $(c)$ is well-known when $G$ is finite (see \cite[Lemma 4.1]{GreenSLAG}, \cite[Lemma 4.20]{TaoVu}, \cite[Propsition 4.5]{CPTNIP}). An elementary proof for general $G$ follows from \cite[Lemma 5.5]{CHP} (see  \cite[Remark 4.5(3)]{CP-AVSAR}).
\end{proof}

We can now state the noncommutative version of Bogolyubov's Lemma. For simplicity, we include a symmetry assumption, which is not made in the sources discussed below. 

\begin{theorem}\label{thm:Bogo}
Let $G$ be a finite group and fix a symmetric set $S\seq G$ with $|S|\geq\epsilon|G|$. Then $S^4$ contains a Bohr neighborhood of complexity $O_\epsilon(1)$.
\end{theorem}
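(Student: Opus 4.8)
The plan is to reduce the noncommutative Bogolyubov's Lemma to the representation-theoretic core of the classical argument, following the strategy that Fourier analysis on the group $G$ (i.e., the Peter--Weyl decomposition of $L^2(G)$ for finite $G$) can be used to detect large Bohr structure inside $S^4 = S \cdot S \cdot S^{-1} \cdot S^{-1}$ (using $S = S^{-1}$). Concretely: let $f = \bone_S$ and consider the convolution-type function $g = \bone_S * \bone_S$, or better the normalized indicator $\mu = \bone_S/|S|$, and analyze the operator-valued Fourier coefficients $\hat{\mu}(\rho) = \mathbb{E}_{x}[\mu(x)\rho(x)]$ ranging over irreducible unitary representations $\rho$ of $G$. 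The density hypothesis $|S| \geq \epsilon|G|$ forces $\|\hat{\mu}(\rho)\|$ to be "large" (close to contributing to $\hat{\mu}(\bone) = |S|/|G| \geq \epsilon$) for only a bounded-dimensional collection of representations; more precisely, a Parseval/Plancherel computation bounds $\sum_\rho (\dim\rho)\,\|\hat\mu(\rho)\|_{\mathrm{HS}}^2 = 1/\mu(e)\cdot(\text{something}) \leq 1/\epsilon$, so the "spectrum" $\{\rho : \|\hat\mu(\rho)\|_{\mathrm{op}} \geq \eta\}$ has total dimension $O_\epsilon(1)$ once $\eta$ is a fixed power of $\epsilon$.

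The key steps, in order, are: (1) Set up Fourier analysis on the finite group $G$ via irreducible unitary representations, record Plancherel and the convolution identity $\widehat{\mu * \mu^*}(\rho) = \hat\mu(\rho)\hat\mu(\rho)^*$. (2) Use the density bound and Parseval to show the large spectrum $\Lambda = \{\rho : \|\hat\mu(\rho)\|_{\mathrm{op}}^2 \geq \epsilon^k\}$ (for a suitable constant $k$) satisfies $\sum_{\rho \in \Lambda} \dim\rho \leq O_\epsilon(1)$, hence in particular $\sum_{\rho\in\Lambda}(\dim\rho)^2 \leq O_\epsilon(1)$. (3) Form the homomorphism $\tau = \bigoplus_{\rho\in\Lambda}\rho \colon G \to \U(m)$ with $m = \sum_{\rho\in\Lambda}\dim\rho = O_\epsilon(1)$, and show that any $x$ with $\tau(x)$ close enough to the identity lies in $S^4$: this is the standard argument that if all the "heavy" Fourier modes are nearly trivial on $x$, then $\bone_S * \bone_S * \bone_S * \bone_S(x) > 0$, because the negligible contribution of the small modes cannot cancel the main term $\mu(e)^{\cdots}$-scale bound coming from $\Lambda$ together with $\hat\mu(\bone)^4 = (|S|/|G|)^4 \geq \epsilon^4$. (4) Finally, handle the normal subgroup $K \seq \T(m)$ of bounded index demanded by Definition~\ref{def:Bohrdef}: the image $\tau(G)$ is a finite group, and one passes to the (bounded-index, by Jordan's theorem applied to the finite subgroup $\tau(G)$ of $\U(m)$, or more elementarily by an averaging/commutator argument) normal abelian subgroup, intersecting the resulting Bohr set with it; alternatively cite that the earlier formulation in \cite{CP-AVSAR} already packages exactly this, so one only needs the set $\tau^{-1}(U) \seq S^4$ with $\tau$ into $\U(m)$.

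Technically, I would most likely not redo the Fourier analysis from scratch but instead cite the nonabelian Bogolyubov-type statement as it appears in the literature on nonabelian arithmetic regularity (this is precisely the result the introduction attributes to \cite{CoBogo}, which in turn builds on Sanders-type spectral arguments); the excerpt even says the ineffectiveness of Theorem~\ref{thm:NIPAR} comes from there being "no known effective proof of this result," which signals that the intended proof of Theorem~\ref{thm:Bogo} is a citation (or a short deduction) from \cite{CoBogo}, with the symmetry hypothesis and the exponent $4$ being the mild bookkeeping adjustments mentioned just before the statement. So the honest plan is: invoke the main Bogolyubov lemma of \cite{CoBogo} for a symmetric dense set, obtaining $S^4 \supseteq$ a set of the required Bohr form with complexity bounded in terms of $\epsilon$; then translate its output into the $(\delta,m,n)$-Bohr neighborhood language of Definition~\ref{def:Bohrdef} using Remark~\ref{rem:Bohrdef} and \cite[Definition 4.3]{CP-AVSAR}.

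The main obstacle I anticipate is \textbf{not} the spectral/Fourier heart of the argument (which is classical and already in \cite{CoBogo}), but rather reconciling the precise form of the conclusion — specifically producing the normal subgroup $K$ of bounded index with $K \seq \T(m)$ as Definition~\ref{def:Bohrdef} requires — since a raw spectral argument naturally yields $\tau^{-1}(U)$ for $\tau$ landing in $\U(m)$ rather than in a torus. Resolving this requires either (a) a Jordan-type argument: $\tau(G)$ is a finite subgroup of $\U(m)$, so it has a normal abelian subgroup of index $O_m(1) = O_\epsilon(1)$, which after simultaneous diagonalization sits inside a conjugate of $\T(m)$; or (b) simply appealing to the fact, noted in Remark~\ref{rem:Bohrdef}, that this packaging was already carried out in \cite{CP-AVSAR}. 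If the exponent needs to be exactly $4$ (rather than some larger absolute constant), a little extra care is needed to run the convolution count with $\bone_S^{*4}$ and the symmetry $S = S^{-1}$, but this is routine given Lemma~\ref{lem:RCL}-style covering bounds and the Plancherel estimate above.
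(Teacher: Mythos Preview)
Your ``honest plan'' is exactly right: the paper does not prove Theorem~\ref{thm:Bogo} at all, but simply cites it from \cite[Theorem~1.2]{CoBogo} (and, for the precise $(\delta,m,n)$-Bohr neighborhood formulation, from \cite{CHP} and \cite{CP-AVSAR}), noting the symmetry assumption as a simplification. So your identification of the intended argument as a citation, together with the translation via Remark~\ref{rem:Bohrdef}, matches the paper perfectly.

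Your Fourier-analytic sketch is a genuinely different route that the paper does not take. Two cautions are worth flagging. First, the paper states explicitly that ``all existing proofs of Theorem~\ref{thm:Bogo} require model-theoretic tools at some level'' and that there is ``no known effective proof of this result''; your sketch, if it closed, would yield polynomial bounds in $1/\epsilon$ (Plancherel bounds $\sum_{\rho\in\Lambda}\dim\rho$, and Jordan's theorem gives an effective $n$), so either the paper is overstating the literature or your sketch is eliding a genuine obstacle. Second, the proof in \cite{CoBogo} is not the direct representation-theoretic argument you describe: it goes through Sanders' nonabelian Balog--Szemer\'edi lemma \cite{SanBS} combined with an ultraproduct construction, which is where the model-theoretic input and the ineffectiveness enter. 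So while your Fourier plan is the natural thing to try, it is not what the cited references actually do, and you should not present it as a reconstruction of \cite{CoBogo}.
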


At the generality of arbitrary finite groups, this result was first proved by the first author in \cite[Theorem 1.2]{CoBogo} using work of Sanders \cite{SanBS} (but with the slightly weaker notion of Bohr neighborhood discussed in Remark \ref{rem:Bohrdef}). The name \emph{Bogolyubov's Lemma} comes from work of Ruzsa \cite{Ruz94}, who established the result for finite abelian groups (with explicit bounds) using ideas of Bogolyubov \cite{Bog39}. More recently, Theorem \ref{thm:Bogo} was generalized to arbitrary amenable groups by the first author, Hrushovski, and Pillay  \cite[Theorem 5.9]{CHP} using Hrushovski's Stabilizer Theorem \cite{HruAG}, and again by first author and Pillay \cite[Theorem 5.1]{CP-AVSAR} using arithmetic regularity  for ``stable functions" on groups. These later proofs use the definition of Bohr neighborhood given above (see \cite[Proposition 4.4]{CP-AVSAR} and surrounding remarks).

All existing proofs of Theorem \ref{thm:Bogo} require model-theoretic tools at some level. That said, this result will allow us to avoid all of the model-theoretic machinery around NIP formulas  used in \cite{CPTNIP} to prove Theorem \ref{thm:CPTNIP} (as described in the introduction). So it is worth emphasizing that the first author's original proof of Theorem \ref{thm:Bogo} in \cite{CoBogo} was heavily inspired by various techniques developed in \cite{CPTNIP} for working with Bohr neighborhoods in ultraproducts, as well as the earlier work of Pillay \cite{PiRCP} on compactifications of pseudofinite groups (which also played a key role in \cite{CPTNIP}).

\subsection{Pl\"{u}nnecke-Ruzsa and tupling parameters}

This section contains some basic preliminaries related to the notions of bounded ``doubling" and ``tripling". Throughout this section, we let $G$ be a group. 

\begin{definition}\label{def:TP}
Given a nonempty finite set $A\seq G$, define the values 

\begin{minipage}{.4\textwidth}
    \begin{align*}
    \sigma[A] &= |A^2|/|A|\\
    \tau[A] &= |A^3|/|A|
    \end{align*}
\end{minipage}%
\begin{minipage}{.5\textwidth}
    \begin{align*}
    \delta[A] &= |AA\inv|/|A|\\
    \alpha[A] &=|AA\inv A|/|A|
    \end{align*}
\end{minipage}
\end{definition}

We  refer to the above values as ``tupling parameters" associated to $A$. Of these, $\sigma[A]$ and $\delta[A]$ are standard (at least in the abelian context, see \cite[Definition 2.4]{TaoVu}). The value $\delta[A]$ also corresponds to $\exp(d(A,A))$ where $d$ denotes Ruzsa distance \cite[Definition 3.1]{TaoPSE}. 

We will use the following fundamental result.  See \cite[Lemma 3.4]{TaoPSE} for a proof of part $(a)$ and a discussion of the history. A proof of part $(b)$ can be found in \cite[Corollary 6.29]{TaoVu}.

\begin{proposition}[Pl\"{u}nnecke-Ruzsa Inequalities]\label{prop:PRI}
Suppose $A\seq G$ is finite and nonempty.
\begin{enumerate}[$(a)$]
\item For any $\epsilon_1,\ldots,\epsilon_n\in\{1,\nv 1\}$, $|A^{\epsilon_1}\cdots A^{\epsilon_n}|\leq \tau[A]^{O_n(1)}|A|$.
\item If $G$ is abelian then, for any $m,n\geq 1$, $|mA-nA|\leq \sigma[A]^{m+n}|A|$.
\end{enumerate}
\end{proposition}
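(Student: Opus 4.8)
The plan is to treat this as standard Pl\"unnecke--Ruzsa theory; since the paper uses it only as a black box, I would simply cite it, but the standard proofs go as follows. Part $(b)$ is the classical commutative inequality with the sharp exponent $n+m$, while part $(a)$ is its noncommutative refinement. The latter is genuinely the more delicate of the two, because in a nonabelian group a bound on $\sigma[A]=|A^2|/|A|$ does \emph{not} control $|A^3|/|A|$, which is exactly why the hypothesis there must be phrased in terms of $\tau[A]$. (Conversely, once $(a)$ is known, it gives $(b)$ for abelian $G$ with a worse bound, namely $|mA-nA|\le\tau[A]^{O_{m+n}(1)}|A|$, since $mA-nA$ has the form $A^{\epsilon_1}\cdots A^{\epsilon_{m+n}}$ with $\epsilon_i\in\{1,\nv 1\}$; but the sharp exponent requires the commutative argument.)

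For part $(b)$, I would first prove the Ruzsa triangle inequality: for finite nonempty $X,Y,Z$ in an abelian group, $|X-Z|\,|Y|\le|X-Y|\,|Y-Z|$. This follows from the injection $(X-Z)\times Y\to(X-Y)\times(Y-Z)$, $(w,y)\mapsto(x_w-y,\,y-z_w)$, where $w=x_w-z_w$ is a fixed choice of representation; injectivity holds because summing the two output coordinates recovers $w$, hence $x_w$ and $z_w$, and then $y$. Next I would invoke Petridis's lemma: fix a nonempty $X\seq A$ minimizing $K':=|X+A|/|X|$ over nonempty subsets of $A$ (so $K'\le\sigma[A]$); then $|X+C+A|\le K'|X+C|$ for every finite $C$, proved by induction on $|C|$ by removing one point of $C$ and using the minimality of $X$ on the resulting subset. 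Iterating over $C=0,A,2A,\dots$ gives $|X+nA|\le(K')^n|X|$ for all $n\ge0$, and then the Ruzsa triangle inequality with middle term $-X$ gives
\[
|nA-mA|\le\frac{|nA+X|\,|X+mA|}{|X|}\le\frac{(K')^n|X|\cdot(K')^m|X|}{|X|}=(K')^{n+m}|X|\le\sigma[A]^{n+m}|A|,
\]
using $|X|\le|A|$. Full details are in \cite[Corollary 6.29]{TaoVu}.

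For part $(a)$, I would follow Tao's treatment of noncommutative sumset estimates. The starting point is the noncommutative Ruzsa triangle inequality $|AC\inv|\,|B|\le|AB\inv|\,|BC\inv|$ (again via an explicit injection $(AC\inv)\times B\to(AB\inv)\times(BC\inv)$), together with its left/right/inverse variants. Writing $K=\tau[A]$, so $|A^3|=K|A|$ and hence also $|A\inv A\inv A\inv|=|(A^3)\inv|=|A^3|=K|A|$, one uses these triangle inequalities to bound every mixed product of length at most three (such as $|AA\inv|$, $|A\inv A|$, $|AAA\inv|$, $|AA\inv A|$, $|AA\inv A\inv|$) by $K^{O(1)}|A|$. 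One then bootstraps to arbitrary length by induction on $n$: given a word $A^{\epsilon_1}\cdots A^{\epsilon_n}$, split it into two shorter subwords, use Ruzsa's Covering Lemma (Lemma \ref{lem:RCL}) to cover one of them by a bounded number of translates of a short symmetric product formed from $A$ and $A\inv$, and recombine, each step costing only a bounded power of $K$. The full argument is \cite[Lemma 3.4]{TaoPSE}.

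The main obstacle in part $(a)$ is not conceptual but organizational: one must arrange the noncommutative triangle inequalities and the covering-based induction so that all the relevant mixed products are controlled simultaneously and the bounded exponents are tracked correctly. There is no circularity to worry about, since neither proof uses any result from the present paper.
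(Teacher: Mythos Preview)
Your proposal is correct and aligns with the paper's approach: the paper simply cites \cite[Lemma 3.4]{TaoPSE} for part $(a)$ and \cite[Corollary 6.29]{TaoVu} for part $(b)$, which are exactly the references you invoke. Your additional sketches of the Petridis and Tao arguments are accurate and go beyond what the paper provides, but the underlying strategy is the same.
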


 The next fact includes some  specific inequalities along the same lines.

\begin{fact}\label{fact:triplings} Fix a nonempty finite set $A\seq G$. 
\begin{enumerate}[$(a)$]
\item $\sigma[A]\leq\tau[A]$ and $\delta[A]\leq\alpha[A]$.
\item $\delta[A]\leq\sigma[A]^2$.
\item $\tau[A]\leq\alpha[A]\sigma[A]^2$ and $\alpha[A]\leq\tau[A]\sigma[A]^2$.
\item $\alpha[A]$ and $\sigma[A]$ cannot be bounded uniformly in terms of each other.
\item If $G$ is abelian then $\tau[A]\leq \delta[A]^3$, and hence $\sigma[A]$, $\tau[A]$, $\delta[A]$, and $\alpha[A]$ are all bounded uniformly in terms of each other. 
\end{enumerate}
\end{fact}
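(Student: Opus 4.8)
The plan is to dispatch the five parts in order, reducing (a), (b), (c), (e) to standard Ruzsa calculus and treating (d) with explicit examples. Part (a) is immediate: fix any $a\in A$; right translation by $a$ gives $A^2a\seq A^3$ and $AA\inv a\seq AA\inv A$, so $|A^2|\leq|A^3|$ and $|AA\inv|\leq|AA\inv A|$, which is exactly $\sigma[A]\leq\tau[A]$ and $\delta[A]\leq\alpha[A]$.

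For (b) and (c) the workhorse is the noncommutative Ruzsa triangle inequality: for finite subsets $U,V,W$ of any group, $|UW\inv|\,|V|\leq|UV\inv|\,|VW\inv|$. (This has a one-line proof: choosing for each $x\in UW\inv$ a representation $x=u(x)w(x)\inv$ with $u(x)\in U$ and $w(x)\in W$, the map $(x,v)\mapsto(u(x)v\inv,\,vw(x)\inv)$ is an injection from $UW\inv\times V$ into $UV\inv\times VW\inv$.) Part (b) is one application, with $U=W=A$ and $V=A\inv$: since $|A\inv A\inv|=|(A^2)\inv|=|A^2|$, we get $|AA\inv|\,|A|\leq|A^2|^2$, i.e.\ $\delta[A]\leq\sigma[A]^2$. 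Part (c) is the main computation, which I would split in two. For $\tau[A]\leq\alpha[A]\sigma[A]^2$: apply the triangle inequality with $U=A$, $V=A\inv$, $W\inv=A^2$ to get $|A^3|\,|A|\leq|A^2|\,|A\inv A^2|$; then apply it with $U=A\inv A$, $V=A$, $W\inv=A$ to get $|A\inv A^2|\,|A|\leq|A\inv AA\inv|\,|A^2|=|AA\inv A|\,|A^2|$ (using $|A\inv AA\inv|=|(AA\inv A)\inv|$); multiplying and dividing by $|A|^3$ yields the bound. For $\alpha[A]\leq\tau[A]\sigma[A]^2$ I would argue symmetrically: apply the triangle inequality with $U=A$, $V=A\inv$, $W\inv=A\inv A$ to get $|AA\inv A|\,|A|\leq|A^2|\,|A\inv A\inv A|$, then with $U=A\inv$, $V=A$, $W\inv=A^2$ to get $|A\inv A\inv A|\,|A|=|A\inv A^2|\,|A|\leq|A^2|\,|A^3|$ (using $|A\inv A\inv A|=|(A\inv A^2)\inv|$), and combine. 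The only real hazard here is keeping straight which sets get inverted.

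Part (d) requires exhibiting the noncommutative pathology in both directions, which I would do with coset-type examples inside wreath products. Let $H$ be a finite group and work in $G=H\wr(\Z/2\Z)=(H\times H)\rtimes(\Z/2\Z)$, identifying $H$ with the first factor and letting $g$ be the order-two element swapping the two copies, so $gHg\inv\cap H=\{1\}$. For ``$\sigma$ does not bound $\alpha$'' take $A=H\cup\{g\}$: then $A^2=H\cup Hg\cup gH\cup\{g^2\}$, so $\sigma[A]\leq 3$, whereas $AA\inv A\supseteq Hg\inv H$ and the double-coset count gives $|Hg\inv H|=|H|[H:H\cap g\inv Hg]=|H|^2$, so $\alpha[A]\geq|H|^2/(|H|+1)\to\infty$ as $|H|\to\infty$. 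For ``$\alpha$ does not bound $\sigma$'' take $A=Hg$: then $AA\inv=Hgg\inv H=H$ and $AA\inv A=H\!\cdot\!Hg=Hg=A$, so $\alpha[A]=1$, while right-translation by $g\inv$ gives $|A^2|=|HgHg|=|HgH|=|H|^2$, so $\sigma[A]=|H|\to\infty$.

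Part (e): in the abelian case, the Pl\"unnecke--Ruzsa inequality (Proposition~\ref{prop:PRI}, in the sharper form obtained by applying it to $-A$; cf.\ \cite[Corollary 6.29]{TaoVu}) gives $|3A|\leq(|A-A|/|A|)^3|A|$, i.e.\ $\tau[A]\leq\delta[A]^3$. The ``hence'' then follows by combining this with parts (a), (b), (c): one has $\sigma[A]\leq\tau[A]\leq\delta[A]^3$, $\delta[A]\leq\sigma[A]^2$, and $\alpha[A]\leq\tau[A]\sigma[A]^2\leq\delta[A]^9$, together with $\delta[A]\leq\alpha[A]$ from (a); hence each of $\sigma,\tau,\delta,\alpha$ is bounded by a fixed power of any other. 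I expect the bookkeeping in part (c) and the verification of the examples in part (d) to be the only places needing genuine care; everything else is routine once the Ruzsa triangle inequality and Pl\"unnecke--Ruzsa are in hand.
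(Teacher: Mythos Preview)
Your proof is correct and follows the same approach the paper indicates: parts $(a)$ and $(b)$ are trivial or one Ruzsa-triangle application, part $(c)$ is the Ruzsa-triangle exercise the paper leaves to the reader (your two-step chaining is exactly what is intended), and part $(e)$ is the Pl\"unnecke--Ruzsa citation. For part $(d)$ the paper only points to \cite[Remark 2.2]{CoBogo}, whereas you supply explicit wreath-product examples; your constructions are correct (in particular the verification that $A=Hg$ has $\alpha[A]=1$ but $\sigma[A]=|H|$, and that $A=H\cup\{g\}$ has $\sigma[A]\leq 3$ but $\alpha[A]\geq |H|^2/(|H|+1)$, goes through as written), and they are of the same flavor as the cited examples.
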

\begin{proof}
Part $(a)$ is trivial. Part $(b)$ follows from the Ruzsa triangle inequality  \cite[Lemma 3.2]{TaoPSE}. Part $(c)$ is  an exercise involving the Ruzsa triangle inequality, which we leave to the reader.\footnote{Part $(c)$ will not be required for any of our proofs, and instead will only be used to provide context for various assumptions in the statements of our results.}  For part $(d)$, see \cite[Remark 2.2]{CoBogo}. For part $(e)$, see \cite[Corollary 6.28]{TaoVu}.
\end{proof}

In the next proposition, we note that for certain set systems in groups, VC-dimension $0$ corresponds to a certain tupling parameter of $1$, and thus is characterized by strong algebraic structure. This will be convenient later for technical reasons, and also draws a nice connection to basic known facts  on doubling. The main content of part $(b)$ was  first shown by Sisask \cite[Proposition 4.7]{SisNIP} (modulo Proposition \ref{prop:VCvariants}$(d)$ and the fact that a set system has VC-dimension $0$ if and only if its dual does).

\begin{proposition}\label{prop:d=0} Let $G$ be a group and fix a nonempty finite set $A\seq G$.
\begin{enumerate}[$(a)$]
\item Suppose $B\seq G$ is nonempty and finite. Then $\VC^\ell_B(A)=0$ if and only if $|BA|=|A|$, and $\VC^r_B(A)=0$ if and only if $|AB|=|A|$.
\item The following are equivalent.
\begin{enumerate}[$(i)$]
\item $\VC^\ell_{A\inv}(A)=0$
\item $\VC^r_{A\inv}(A)=0$.
\item $|AA\inv|=|A|$.
\item $|A\inv A|=|A|$.
\item $A$ is a coset of a subgroup of $G$.
\end{enumerate}
\item The following are equivalent.
\begin{enumerate}[$(i)$]
\item $\VC^\ell_{A}(A)=0$
\item $\VC^r_A(A)=0$.
\item $|AA|=|A|$.
\item $A$ is a coset $aH$ of a subgroup $H\leq G$ with $aH=Ha$.
\end{enumerate}
\end{enumerate}
\end{proposition}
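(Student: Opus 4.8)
The plan is to prove the three parts in order, using part $(a)$ as the workhorse for parts $(b)$ and $(c)$. For part $(a)$, I would argue by proving the contrapositive in each direction. If $|BA|>|A|$, then there is some $b\in B$ with $bA\neq A$; since $|bA|=|A|$, this gives $bA\not\seq A$, so pick $a\in A$ with $ba\notin bA\cap A$ — wait, more carefully: pick $b_1,b_2\in B$ (possibly using $b$ and the identity-like witness) so that the single set $\{a\}$ for a well-chosen $a\in A$ is shattered by $\cF^\ell_B(A)$, i.e.\ some translate contains $a$ and some translate misses $a$. Concretely, if $bA\neq A$ with $|bA|=|A|=|BA|-(\text{something positive})$, there exist $a\in A\setminus bA$ and $a'\in bA\setminus A$; then $a\in A\cap A$ but $a\notin bA\cap A$, so $\{a\}$ is shattered, giving $\VC^\ell_B(A)\geq 1$. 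Conversely, if $|BA|=|A|$, then every $bA$ (for $b\in B$) is a subset of $BA$ of size $|A|=|BA|$, hence $bA=BA$ for all $b\in B$; so $\cF^\ell_B(A)$ is a single set (or two: the full set $BA$), which shatters no singleton, so $\VC^\ell_B(A)=0$. The right-handed statement is symmetric (or apply the left version to $A\inv$, $B\inv$ via Proposition \ref{prop:VCinv}).

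For part $(b)$, the equivalences $(i)\Leftrightarrow(iii)$ and $(ii)\Leftrightarrow(iv)$ are immediate from part $(a)$ with $B=A\inv$ (noting $|A\inv A| = |AA\inv|$ is not automatic, so I keep them separate and let the two-sided argument do the work). The core is $(iii)\Leftrightarrow(v)$: if $A$ is a coset $gH$, then $AA\inv = gHH\inv g\inv = gHg\inv$ has size $|H|=|A|$; conversely, if $|AA\inv|=|A|$, fix $a_0\in A$ and set $H=a_0\inv A$; then $|H|=|A|$ and $1\in H$, and I want to show $H$ is a subgroup, equivalently $HH\inv=H$. Since $AA\inv = a_0 H H\inv a_0\inv$ has size $|A|=|H|$ and contains $a_0 H a_0\inv$... hmm, I should instead work directly: $|AA\inv|=|A|$ together with $A\seq AA\inv \cdot a$ for any $a\in A$ forces $AA\inv$ to be (a translate of) $A$ on the right, and standard manipulations (of the type in Tao's "inverse theorems for very small doubling", cf.\ the Ruzsa covering machinery) show $S:=AA\inv$ is a subgroup with $A$ a left coset. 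The cleanest route: $1\in AA\inv$ and $|AA\inv|=|A|\leq |AA\inv A\inv|\leq$ ... actually the symmetric set $T = A\inv A$ satisfies $1\in T$, $T=T\inv$, and I claim $TT=T$; indeed $|TA\inv| = |A\inv A A\inv|$ and bounding this by $|A\inv|=|A|$ using $|A\inv A|=|A|$ forces equality, whence $T$ is closed under multiplication, so $T$ is a subgroup and $A = aT$ for any $a\in A$. I would run this argument carefully for one side and transfer. Finally $(i)\Leftrightarrow(ii)$ then follows since both are equivalent to $(v)$.

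For part $(c)$, again $(i)\Leftrightarrow(iii)$ and $(ii)\Leftrightarrow(iii)$ follow from part $(a)$ with $B=A$, so the content is $(iii)\Leftrightarrow(iv)$: if $|AA|=|A|$, fix $a\in A$; then $a\inv A$ and $Aa\inv$ both have size $|A|$ and contain $1$, and from $|AA|=|A|$ one gets $a\inv A = a\inv A a^{\nv 1} a = \ldots$ — the point is that $H:=a\inv A$ is forced to be a subgroup AND to satisfy $aH = Ha$, i.e.\ $H$ is normalized by $a$; this last symmetry is exactly what distinguishes part $(c)$ from part $(b)$ and comes out of $|AA|=|A|$ (whereas $|AA\inv|=|A|$ does not give it). Concretely, $|AA|=|A|$ gives $AA = A a_1$ for suitable $a_1$ and $AA = a_2 A$, forcing $a_1 A\inv = a_2 A\inv$-type relations that pin down $Ha = aH$. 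I would cite or reimitate the standard "sets of doubling exactly $1$" lemma here.

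The main obstacle I anticipate is the purely combinatorial-group-theoretic step turning an exact-size hypothesis like $|AA\inv|=|A|$ or $|AA|=|A|$ into the coset conclusion, and in particular getting the $aH=Ha$ normalization in part $(c)$ right while keeping part $(b)$ genuinely weaker. Everything involving VC-dimension (part $(a)$ and the reductions) is routine; the delicacy is entirely in the elementary multiplicative-combinatorics lemma, and in being careful that the two-sided versions $(i)$ vs.\ $(ii)$ and $(iii)$ vs.\ $(iv)$ are each derived rather than assumed equal a priori.
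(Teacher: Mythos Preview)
Your overall strategy matches the paper's: part~$(a)$ as the workhorse, then in $(b)$ and $(c)$ reduce the VC conditions to the product-set size conditions via $(a)$, and handle the remaining implication (``product set has size $|A|$ implies coset'') as a standard exercise. Two small slips are worth flagging.

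First, in part~$(a)$, your contrapositive argument assumes $A$ itself lies in $\cF^\ell_B(A)$ (you write ``$a\in A\cap A$''), but nothing guarantees $1\in B$ or that any $b'A$ equals $A$. The clean fix---which is what the paper does---is to note that $\VC(\cF)=0$ for a nonempty set system iff $|\cF|=1$; so $\VC^\ell_B(A)=0$ iff all translates $bA$ ($b\in B$) coincide, iff $BA=bA$ for any fixed $b\in B$, iff $|BA|=|A|$. Equivalently, in your framing: if $|BA|>|A|$ then there exist $b_1,b_2\in B$ with $b_1A\neq b_2A$, and any $a\in b_1A\setminus b_2A$ gives a shattered singleton.

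Second, in part~$(b)$ you have the pairings swapped: with $B=A\inv$, part~$(a)$ gives $\VC^\ell_{A\inv}(A)=0\Leftrightarrow |A\inv A|=|A|$, i.e.\ $(i)\Leftrightarrow(iv)$, and $\VC^r_{A\inv}(A)=0\Leftrightarrow |AA\inv|=|A|$, i.e.\ $(ii)\Leftrightarrow(iii)$---exactly as the paper has it. For the step $(iii)\Rightarrow(v)$ where you meander, the paper's argument is tidier and worth adopting: set $H=Aa\inv$ for any fixed $a\in A$; then $HH\inv=AA\inv$ has size $|A|=|H|$, so by the same ``$|HH\inv|=|H|$ forces all right translates to coincide'' reasoning as in $(a)$, $HH\inv=Hx\inv$ for every $x\in H$; taking $x=1\in H$ gives $HH\inv=H$, so $H$ is a subgroup and $A=Ha$. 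For part~$(c)$, the paper (like you) simply cites the standard fact that $|AA|=|A|$ characterizes cosets $aH$ with $aH=Ha$.
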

\begin{proof}
Part $(a)$. First note that for any nonempty set system $\cF$, $\VC(\cF)=0$ if and only if $|\cF|=1$. Thus we have $\VC^\ell_B(A)=0$ if and only if $xA=yA$ for all $x,y\in B$. So $\VC^\ell_B(A)=0$ if and only if $BA=bA$ for any fixed $b\in B$. This latter condition is clearly equivalent to $|BA|=|A|$. The argument for $\VC^r_B(A)=0$ is similar.

Part $(b)$. We have $(i)\Leftrightarrow (iv)$ and $(ii)\Leftrightarrow (iii)$ by part $(a)$. Moreover, $(v)\Rightarrow (iv)$ and $(v)\Rightarrow (iii)$ are easy to verify. Finally, $(iii)\Rightarrow (v)$ and $(iv)\Rightarrow (v)$ are standard exercises. For example, assume $|AA\inv|=|A|$ and set $H=Aa\inv$ where $a\in A$ is some fixed element.  Then $HH\inv=AA\inv$ so $|HH\inv|=|A|=|H|$. As in part $(a)$, this implies $HH\inv =Hx\inv$ for any fixed $x\in H$. Since $1\in H$, we have $HH\inv =H$. This shows that $H$ is a subgroup. Hence $A=Ha$ is a coset. The argument for $(iv)\Rightarrow (v)$ is similar.

Part $(c)$. The equivalence of $(i)$, $(ii)$, and $(iii)$ follows from part $(a)$. The equivalence of $(iii)$ and $(iv)$ is again a well-known basic exercise (see, e.g., \cite[Proposition 1.6]{BreuH5P}). 
\end{proof}

\subsection{Breuillard, Green, and Tao}
 In this section, we describe a result of Breuillard, Green, and Tao \cite{BGT}, which can be viewed as an analogue of Theorem \ref{thm:Bogo} in which dense sets in finite groups are replaced by  finite sets with bounded tripling in  arbitrary groups. In this case, rather than a Bohr neighborhood, the key structural ingredient is a special kind of finite  set called a ``coset nilprogression", which is a noncommutative analogue of a generalized arithmetic progression in an abelian group. The full definition is somewhat lengthy, and the finer details will not be needed here. So we refer the reader to Definitions 2.3, 2.5, and 2.6 in \cite{BGT}, which altogether define the notion of a \emph{coset nilprogression $P$ of rank $r$, step $s$, and in $t$-normal form}. We say that $P$ has \emph{complexity $c$} if $\max\{r,s,t\}\leq c$. The next fact lists the only specific properties of coset nilprogressions that we will need.

\begin{fact}\label{fact:coverQ}
Let $G$ be a group and let $P$ be a coset nilprogression in $G$.
\begin{enumerate}[$(a)$]
\item $P$ is symmetric.
\item If $P$ has complexity $c$, then there is a finite symmetric set $Q$ (in fact, a coset nilprogression)  such that $Q^2\seq P$ and $|P|\leq O_c(|Q|)$. 
\end{enumerate}
\end{fact}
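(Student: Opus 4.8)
The plan is to read both parts directly off the definition of a coset nilprogression in \cite[Definitions 2.3, 2.5, 2.6]{BGT}, invoking the normal-form machinery developed there rather than reproving it.

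For part $(a)$: recall that a nilprogression $P = P(u_1,\dots,u_r;L_1,\dots,L_r)$ is, by construction, the set of products of the letters $u_i^{\pm1}$ in which $u_i^{\pm1}$ occurs at most $L_i$ times. The empty product gives $1\in P$, and reversing a word and inverting each letter shows $P=P\inv$; hence $P$ is symmetric. A coset nilprogression is (roughly) a set of the form $HP$, where $H$ is a finite subgroup normalized by the $u_i$, so that $HP=PH$; since $H$ is symmetric and $P$ is symmetric, $(HP)\inv=P\inv H\inv=PH=HP$ and $1\in HP$. Thus $(a)$ follows; indeed it is recorded as part of the setup in \cite{BGT}.

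For part $(b)$: fix a constant $\lambda=\lambda(c)\in(0,1)$ to be chosen, and let $Q$ be the coset nilprogression with the same subgroup $H$ and the same generators $u_i$, but with each length $L_i$ replaced by $L_i'=\lceil\lambda L_i\rceil$. Then $Q$ is again a coset nilprogression of the same rank and step (and one checks it remains in $O_c(1)$-normal form), hence symmetric by part $(a)$, and clearly $Q\seq P$. The size estimate is the easy half: in $t$-normal form the size of a coset nilprogression of rank $r$ is comparable, up to a factor depending only on the complexity, to $|H|\prod_{i=1}^r L_i^{d_i}$ for suitable exponents $d_i$ coming from the lower central series weights; replacing each $L_i$ by $\lceil\lambda L_i\rceil$ changes this by a factor $O_c(1)$ in either direction, so $|P|\leq O_c(|Q|)$.

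The crux — and the only place the normal-form hypothesis does real work — is the containment $Q^2\seq P$. An element of $Q^2$ is a product of two words, each using $u_i^{\pm1}$ at most $\lceil\lambda L_i\rceil$ times, times an element of $H^2=H$. Using the normal-form relations to reorder the letters into the prescribed order, at the cost of introducing commutators and inflating the occurrence count of each generator by a factor $O_c(1)$, such a product rewrites as a single word in which $u_i^{\pm1}$ occurs at most $O_c(\lambda)L_i\leq L_i$ times, provided $\lambda$ was chosen small enough in terms of $c$; the residual $H$-factor stays in $H$ since $H$ is normalized by the $u_i$. Hence $Q^2\seq HP=P$. I expect this last step to be the only nonroutine part of the argument; making it precise is exactly the technical content of \cite[Definitions 2.5--2.6 and the surrounding lemmas]{BGT}, and I would simply cite the relevant statement there.
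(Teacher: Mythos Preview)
Your proposal is correct and aligns with the paper's approach: the paper simply records that part $(a)$ is immediate from the definitions in \cite{BGT} and that part $(b)$ is a consequence of \cite[Lemma C.1]{BGT}, which is precisely the dilation argument you sketched. The only difference is that you unpack the content of the cited lemma, whereas the paper cites it directly (and points to Lemma C.1 rather than the surrounding material near Definitions 2.5--2.6).
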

\begin{proof}
Part $(a)$ is immediate from the definitions in \cite{BGT}. Part $(b)$ is a consequence of \cite[Lemma C.1]{BGT}.
\end{proof}

Note that Fact \ref{fact:coverQ}$(b)$ is analogous to the property of Bohr neighborhoods given in Fact \ref{fact:Bohr}$(b)$. 

Next we state Breuillard, Green, and Tao's result.

\begin{theorem}[\cite{BGT,TaoPSE}]\label{thm:BGT}
Let $G$ be a group and fix a  finite symmetric set $S\seq G$ with $|S^3|\leq k|S|$. Then there is some integer $n= O_k(1)$ and a coset nilprogression $P$  of rank and step $O(\log(2k))$, and in $O_k(1)$-normal form, such that $P\seq S^{n}$  and $|S|\leq O_k(|P|)$.
\end{theorem}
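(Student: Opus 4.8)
The statement in question is Theorem~\ref{thm:BGT}, which asks to extract from the Breuillard--Green--Tao structure theorem \cite{BGT} the precise packaging we need: given a finite symmetric $S$ with $|S^3|\le k|S|$, find $n=O_k(1)$ and a coset nilprogression $P$ of rank and step $O(\log(2k))$, in $O_k(1)$-normal form, with $P\seq S^n$ and $|S|\le O_k(|P|)$. The plan is essentially to invoke the main theorem of \cite{BGT} and then clean up the bookkeeping, so I expect this to be a short ``citation plus Pl\"{u}nnecke--Ruzsa'' argument rather than anything genuinely new.

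First I would recall the precise form of the Breuillard--Green--Tao theorem for $K$-approximate groups. A finite symmetric $S$ with $|S^3|\le k|S|$ is not itself an approximate group, but it is well known (and follows from the Pl\"{u}nnecke--Ruzsa inequalities, Proposition~\ref{prop:PRI}) that a bounded power of $S$, say $S^m$ for a small absolute $m$, is a $K$-approximate group with $K=k^{O(1)}$; alternatively one passes to $S\cup S^2$ or uses the standard fact that $\{x:|xS\cap S|\ge |S|/k'\}$-type constructions yield approximate groups. In any case one obtains a $k^{O(1)}$-approximate group $H$ with $H\seq S^m$ and $|H|\le k^{O(1)}|S|$, and also $|S|\le |H|$ trivially if $S\seq H$ (which we can arrange since $1\in S$). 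The BGT theorem then provides a coset nilprogression $P$ of rank and step $O(\log K)=O(\log(2k))$, in $O_K(1)=O_k(1)$-normal form, with $P\seq H$ and $|H|\le O_K(|P|)=O_k(|P|)$. Chaining these inclusions gives $P\seq H\seq S^m$ (so take $n=m=O_k(1)$, or rather $n=O(1)$, absorbing into $O_k(1)$), and $|S|\le |H|\le O_k(|P|)$, which is exactly the conclusion. I would cite \cite[Theorem~1.6]{BGT} (or the ``Freiman-type'' corollary therein) for the existence of $P$, and \cite[Lemma~3.4]{TaoPSE} or Proposition~\ref{prop:PRI} for the approximate-group reduction; the reference to \cite{TaoPSE} already attached to the theorem statement suggests that the intended proof goes through the clean ``small tripling $\Rightarrow$ approximate group'' packaging in that survey.

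The one point requiring a little care is making sure the \emph{two-sided} bound $|S|\le O_k(|P|)$ and $P\seq S^n$ hold simultaneously with the stated rank/step bounds. The rank and step bounds $O(\log(2k))$ come directly from the BGT theorem applied to a $k^{O(1)}$-approximate group, since $\log(k^{O(1)})=O(\log k)=O(\log(2k))$ (the ``$2k$'' is just to keep the expression sensible for $k$ near $1$). The normal-form parameter $t$ and the multiplicative constants in $|H|\le O_k(|P|)$ are allowed to depend arbitrarily on $k$, so there is no tension there. The lower bound $|S|\le O_k(|P|)$ is where one must check that $S$ (not just $H$) is controlled: this is immediate because $S\seq H$ and $|H|\le O_k(|P|)$. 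And $P\seq S^n$ follows from $P\seq H\seq S^m$ with $n=m$ a fixed small integer. So there is no real obstacle; the ``proof'' is a matter of quoting \cite{BGT} in the right normalization and inserting the elementary reduction to approximate groups.

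I expect the main (very minor) obstacle to be purely expository: stating the reduction ``bounded tripling $\Rightarrow$ bounded-power-is-an-approximate-group'' cleanly enough that the reader can see $K=k^{O(1)}$ and $H\seq S^{O(1)}$, $|H|=O_k(|S|)$ without a long detour. Since this is standard (see \cite[Lemma~3.4]{TaoPSE}), I would simply cite it rather than reprove it, so the entire argument is a few lines.
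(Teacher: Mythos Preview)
Your approach is essentially identical to the paper's: the paper also deduces the theorem from \cite[Theorem~2.12]{BGT} (stated for $K$-approximate groups) after noting via \cite[Corollary~3.10]{TaoPSE} that $S^3$ is a $k^{O(1)}$-approximate group. One small correction: the version of BGT giving rank and step $O(\log 2K)$ only places $P$ inside $H^{O_K(1)}$ rather than $H$ itself, which is exactly why $n=O_k(1)$ (not an absolute constant) in the stated conclusion.
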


The previous result follows from  \cite[Theorem 2.12]{BGT} which makes the (qualitatively) stronger assumption that $S$ is a $k$-approximate group. To obtain the version above, one uses the fact that if   $|S^3|\leq k|S|$ then $S^3$ is an $O(k^{O(1)})$-approximate group  (see \cite[Corollary 3.10]{TaoPSE}). It is also important to note that one can obtain an absolute constant value for $n$ at the cost of an ineffective $O_k(1)$ bound on the rank and step of $P$ (see \cite[Theorem 2.10]{BGT}). However, the nature of our results makes the above version more useful. For this reason, we make the following definition for the sake of convenience in later arguments.

\begin{definition}\label{def:BGTn}
Given $k\geq 1$, let $n(k)=\max\{n,3\}$ where $n= O_k(1)$ is as in Theorem \ref{thm:BGT}.
\end{definition}

\subsection{The Bogolyubov-Ruzsa Lemma}\label{sec:BRL}

We next state the \emph{Bogolyubov-Ruzsa Lemma}, which can be viewed as a commutative analogue of Theorem \ref{thm:BGT} (or, more accurately, of the related result \cite[Theorem 2.10]{BGT} mentioned after Theorem \ref{thm:BGT}). The first statement of this kind was proved by Ruzsa \cite{Ruz94} for $\Z$, and later generalized to arbitrary abelian by Green and Ruzsa \cite{GrRuz}. The following version gives the best-known bounds, due to Sanders \cite{SanBR}, which are quasi-polynomial in the doubling constant (we have written the bounds in a slightly weaker form for the sake of simplicity). 

\begin{theorem}[\cite{SanBR}]\label{thm:BRL}
Let $G$ be an abelian group and fix a nonempty finite set $S\seq G$ with $|2S|\leq k|S|$. Then there is a proper coset progression $P$ in $G$ of rank $O((\log 2k)^{6})$ such that $P\seq 2S-2S$ and $|S|\leq \exp(O((\log 2k)^{7})|P|$.
\end{theorem}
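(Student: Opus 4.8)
The "final statement" in the excerpt is Theorem~\ref{thm:BRL} (the Bogolyubov–Ruzsa Lemma of Sanders), which is quoted verbatim from \cite{SanBR} as an external input; the paper offers no proof of its own, so a genuine proof plan here means sketching how one would establish a result of this type. Accordingly I describe the standard Fourier-analytic / Croot–Sisask route, which is how Sanders' argument runs.

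\medskip

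The plan is to work in the finite abelian group $G$ (one first reduces to the finite case by passing to the subgroup generated by $S$, or a suitable model, so that Fourier analysis on $\widehat{G}$ is available). Write $A = S$ with $|A+A| \leq k|A|$ and set $\mu = |G|/|A|$. The goal is to produce a proper coset progression of small rank inside $2A-2A$. The key probabilistic tool is the \textbf{Croot–Sisask almost-periodicity lemma}: if $|A+A| \leq k|A|$, then for every $\eta>0$ and every $p \geq 1$ there is a set $T \subseteq A-A$ with $|T| \geq (2k)^{-O(p/\eta^2)}|A|$ such that $\|1_A * 1_B(\,\cdot + t) - 1_A * 1_B\|_{L^p(\mu_G)} \leq \eta |A| \, |B|^{1/p'}$ (suitably normalized) for all $t \in T$, where $B$ is an auxiliary set one is free to choose. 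First I would apply this to get such an almost-period set $T$; the point is that $T$ is ``smooth'' in a strong averaged sense.

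\medskip

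The second stage is to upgrade $L^p$-almost-periodicity of the convolution into honest Fourier-analytic control, i.e.\ to show the large spectrum $\mathrm{Spec}_\rho(1_A) = \{\gamma \in \widehat{G} : |\widehat{1_A}(\gamma)| \geq \rho|A|\}$ is contained in a Bohr set of small rank, and then to run Chang-type / Bogolyubov-type arguments. Bogolyubov's original observation is that $2A-2A \supseteq \{x : |\widehat{1_A}(\gamma)|^2 e(\gamma x)$ has positive real part on the whole spectrum$\}$, which contains the Bohr set $B(\mathrm{Spec}_\rho(1_A), 1/4)$ for an appropriate threshold $\rho$ (because the small-spectrum tail is controlled by Parseval). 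The difficulty is that a priori $\mathrm{Spec}_\rho(1_A)$ can have size $\sim \rho^{-2}\mu$, so the naive Bohr set has rank polynomial in $\mu$, not in $k$ — useless here. Sanders' innovation, built on Croot–Sisask, is to iterate: using $T$ one shows the spectrum is \emph{itself} highly structured (it is $L^2$-close to being $\mathrm{Bohr}(T,\cdot)$-invariant), and a careful iteration of this ``spectral smoothing'' collapses the effective rank down to $O((\log 2k)^{O(1)})$. Finally, having obtained a Bohr set of rank $O((\log 2k)^6)$ inside $2A-2A$, one invokes the standard fact that a Bohr set of rank $r$ and radius $\delta$ contains a proper coset progression of rank $r$ and size $\geq (\delta/r)^{O(r)}|\mathrm{Bohr}|$ (Bohr-set-to-progression, via a geometry-of-numbers / successive-minima argument); tracking $|A| \leq \mu|{\mathrm Bohr\ set}|$ and the size loss through all iterations yields the claimed bound $|A| \leq \exp(O((\log 2k)^7))|P|$.

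\medskip

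The \textbf{main obstacle} is precisely the spectral-smoothing iteration in stage two: getting the rank down from $\mathrm{poly}(\mu)$ to $\mathrm{polylog}(k)$ is the entire content of Sanders' theorem and is where all the quantitative subtlety lives — one must choose the Croot–Sisask parameters $p,\eta$ and the spectral thresholds at each step so that the density loss compounds only to $(2k)^{-\mathrm{polylog}(k)}$ rather than something worse, which forces $p \sim \log(1/\text{density})$ and a delicate bookkeeping of how many iterations are needed. The Bohr-set-to-progression step and the initial reduction to finite groups are by comparison routine. Since the present paper only needs Theorem~\ref{thm:BRL} as a black box (it is used downstream to feed the Alon–Fox–Zhao-type trick in Lemma~\ref{lem:AFZ}), I would simply cite \cite{SanBR} and not reproduce this argument.
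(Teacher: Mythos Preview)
Your assessment is correct: the paper does not prove Theorem~\ref{thm:BRL} at all but simply cites it as a black-box input from \cite{SanBR}, and you identify this accurately. Your sketch of the Croot--Sisask/spectral-smoothing route is a faithful outline of Sanders' actual argument, so there is nothing to correct; the paper's ``proof'' and yours agree in that both defer to \cite{SanBR}.
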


As with nilprogressions, we refer the reader to \cite{BGT} for the definition of a \emph{proper coset progression}. See also \cite{GrRuz,SanBR,TaoVu}.\footnote{As in \cite{BGT}, a coset progression for us is symmetric and ``centered" at the identity, which differs from some sources where translates are allowed.} We will only need to recall the following effective version of Fact \ref{fact:coverQ}$(b)$, which is a standard exercise.

\begin{fact}\label{fact:coverQa}
Let $G$ be an abelian group and let $P$ be a proper coset progression of rank $r$. Then there is a symmetric set $Q$ (in fact, a  coset progression of rank $r$) such that $2Q\seq P$ and $|P|\leq 4^r|Q|$. 
\end{fact}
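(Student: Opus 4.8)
The plan is to unwind the definition of a proper coset progression and then simply halve all of the step lengths. Recall from \cite{BGT} that a coset progression $P$ of rank $r$ has the form
\[
P = H + \{\ell_1 v_1 + \cdots + \ell_r v_r : |\ell_i|\le L_i \text{ for all } i\}
\]
for some finite subgroup $H\le G$, elements $v_1,\dots,v_r\in G$, and nonnegative integers $L_1,\dots,L_r$; properness means precisely that the $\prod_{i=1}^r(2L_i+1)$ cosets $H+\ell_1 v_1+\cdots+\ell_r v_r$ (over all $(\ell_i)$ with $|\ell_i|\le L_i$) are pairwise distinct, so that $|P| = |H|\prod_{i=1}^r(2L_i+1)$. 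I would then set
\[
Q = H + \{\ell_1 v_1 + \cdots + \ell_r v_r : |\ell_i|\le \lfloor L_i/2\rfloor \text{ for all } i\}.
\]

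First I would observe that $Q$ is a symmetric coset progression of rank $r$: it is centered at $0$, it contains $H$ (hence is nonempty), and it is built from the same subgroup and generators with smaller step lengths. Moreover $Q$ is proper, since its defining box $\{(\ell_i): |\ell_i|\le\lfloor L_i/2\rfloor\}$ is a subset of that of $P$, and distinctness of a family of cosets is inherited by any subfamily; thus $|Q| = |H|\prod_{i=1}^r(2\lfloor L_i/2\rfloor+1)$. Next I would check the containment $2Q\seq P$: any element of $2Q$ lies in $H + \{\ell_1 v_1+\cdots+\ell_r v_r : |\ell_i|\le 2\lfloor L_i/2\rfloor\}$, and $2\lfloor L_i/2\rfloor\le L_i$ for each $i$, so this is contained in $P$.

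Finally I would establish the cardinality bound by a termwise comparison: it suffices to check that $2L+1\le 4(2\lfloor L/2\rfloor+1)$ for every integer $L\ge 0$, which is immediate on splitting into the cases $L$ even and $L$ odd (in fact the constant $3$ already suffices, but $4$ leaves room to spare). Taking the product of these inequalities over $i=1,\dots,r$ and combining with the two cardinality formulas above yields $|P|\le 4^r|Q|$. There is essentially no real obstacle here beyond correctly invoking the BGT definition of properness; the only mild point requiring care is that properness of $P$ descends to the smaller box defining $Q$, which is clear since a subset of a set of distinct cosets is again a set of distinct cosets.
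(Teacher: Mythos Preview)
Your argument is correct and is precisely the standard ``halve the box'' exercise the paper alludes to (the paper does not spell out a proof, labeling Fact~\ref{fact:coverQa} a standard exercise). As you note, the constant $3$ would already suffice, so the stated bound $4^r$ holds with room to spare.
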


The shape of the best possible bounds in Theorem \ref{thm:BRL} is an open problem of active interest.  Optimally, one would wish for a polynomial bound $k^{O(1)}$ on $|S|$ and a logarithmic bound $O(\log 2k)$ on the rank of $S$, but this is known to be false in general due to work of Lovett and Regev \cite[Theorem 1.4]{LovReg} (though a similar statement with these bounds and involving a relaxed form of coset progression is open; see \cite[Conjecture 1.2]{LovReg}). The situation is less murky in the bounded exponent case, where \emph{Polynomial Bogolyubov-Ruzsa Conjecture} states that if $G$ and $S$ are as in Theorem \ref{thm:BRL}, and $G$ has exponent $q$, then there is a subgroup $H\seq 2S-2S$ with $|S|\leq k^{O_q(1)}|H|$ (e.g., see  \cite[p. 2]{GGMT2}, or \cite{Lov-surv} for the exponent $2$ case).  A weaker version of this statement, called the Polynomial Freiman-Ruzsa Conjecture,  was recently proved  by Gowers, Green, Manners, and Tao \cite{GGMT2,GGMT}. We will connect this  to our results at the end of  Section \ref{sec:BE}.

Beyond abelian groups, this line of investigation extends to questions about optimal bounds in Theorem \ref{thm:BGT}, which currently has no known effective proof (though there are effective results in certain classes of groups, e.g., \cite{Tointon}). In Corollaries \ref{cor:PBR} and \ref{cor:PBRa}, we will prove strong versions of  Theorems \ref{thm:BGT} and \ref{thm:BRL}  with polynomial bounds for NIP sets.

\section{Main lemmas}\label{sec:lemmas}

In this section, we prove two technical lemmas on stabilizers needed for our main results. Roughly speaking, Lemma \ref{lem:regularity} is a general ``regularity statement" in terms of arbitrary subsets of stabilizers, while Lemma \ref{lem:structure} gives a general ``structure statement" in terms of such sets.  As explained in the introduction,  this lemma is a substantial elaboration on the Stabilizer Lemma of Alon, Fox, and Zhao \cite{AFZ} (which only applies to subgroups contained in stabilizers). We also emphasize that the proofs in this section are heavily based on work of Sisask \cite{SisNIP}. This is especially the case for Lemma \ref{lem:structure}, which has been assembled from several parts of \cite{SisNIP}. That said, much of \cite{SisNIP} takes place in the abelian setting, and thus the final statement of Lemma \ref{lem:structure} does not directly follow from any one result there. Indeed, for nonabelian groups we must take additional care to balance a somewhat subtle relationship between left and right stabilizers.  

Throughout this section, let $G$ be a fixed group. We first establish general notation for the ``error sets" involved in the regularity statement given by condition $(ii)$ of Theorem \ref{thm:CPTNIP}. 

\begin{definition}\label{def:Zset}
Fix finite sets $A,X\seq G$. Given $\epsilon\in (0,1)$, define 
\begin{align*}
Z^\ell_\epsilon(A,X) &= \{g\in G:\min\{|gX\cap A|,|gX\backslash A|\}\geq\epsilon|X|\},\text{ and}\\
Z^r_\epsilon(A,X) &= \{g\in G:\min\{|Xg\cap A|,|Xg\backslash A|\}\geq\epsilon|X|\}.
\end{align*}
\end{definition}

Note that condition $(ii)$ of Theorem \ref{thm:CPTNIP} can be rephrased simply as the inequality $|Z^\ell_\epsilon(A,B)|\leq \epsilon|G|$ (where $B$ is as in the theorem). For the sake of brevity, our main results in Sections \ref{sec:ineff} and \ref{sec:eff} will be written in this way. 

In the proofs below, we will tacitly use the following remark regarding finiteness of certain stabilizers and  error sets.

\begin{remark}\label{rem:Zplace}
Fix finite nonempty sets $A,X\seq G$ and $\epsilon\in (0,1)$. 
\begin{enumerate}[$(a)$]
\item If $g\in Z^\ell_\epsilon(A,X)$ then $gX\cap A\neq\emptyset$, i.e., $g\in AX\inv$.  So $Z^\ell_{\epsilon}(A,X)\subseteq AX\inv$ and, consequently, $Z^\ell_\epsilon(A,X)$ is finite. Similarly, $Z^r_\epsilon(A,X)\seq X\inv A$.
\item Recall that for $\bullet\in\{\ell,r\}$, $\St^\bullet_\epsilon(A)$ denotes $\Stab^\bullet_{\epsilon|A|}(A)$. By Proposition \ref{prop:Stab}$(d)$, $\St^\bullet_\epsilon(A)$ is finite. 
\end{enumerate}
\end{remark}

We now prove the main results of this section.   The first, Lemma \ref{lem:regularity} below, says that given any subset $X$ of the stabilizer of a set $A$, most translates of $X$ are either mostly inside $A$ or mostly disjoint from $A$. 

\begin{lemma}[regularity]\label{lem:regularity}
Fix a finite set $A\seq G$, a real number $N>0$, and some $\epsilon\in (0,1)$. Suppose $X\seq \Stab^r_N(A)$ is finite and nonempty. Then $|Z^\ell_\epsilon(A,X)|\leq 2N/\epsilon$. In particular,  if $X\seq\St^r_{\epsilon^2/2}(A)$ is finite and nonempty, then $|Z^\ell_\epsilon(A,X)|\leq \epsilon|A|$.
\end{lemma}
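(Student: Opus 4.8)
The plan is to count, for each $g$ in $Z^\ell_\epsilon(A,X)$, the contribution of $g$ to a suitable double sum and then compare with the hypothesis on $X$. Concretely, I would consider the quantity
\[
\Sigma = \sum_{x\in X}|Ax\smd A|,
\]
which by the assumption $X\seq\Stab^r_N(A)$ is at most $N|X|$. The idea is to rewrite $\Sigma$ as a sum over pairs and reorganize it so that a fixed $g$ appears with multiplicity controlled from below whenever $g\in Z^\ell_\epsilon(A,X)$. Writing indicator functions, $|Ax\smd A| = \sum_{h\in G}|\bone_A(hx\inv) - \bone_A(h)|$, so
\[
\Sigma = \sum_{x\in X}\sum_{h\in G}|\bone_A(hx\inv)-\bone_A(h)|.
\]
Now substitute $h = gx$ (for fixed $x$, this is a bijection of $G$), giving $\Sigma = \sum_{g\in G}\sum_{x\in X}|\bone_A(g)-\bone_A(gx)|$. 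For a fixed $g$, the inner sum $\sum_{x\in X}|\bone_A(g)-\bone_A(gx)|$ equals $|gX\backslash A|$ if $g\in A$ and $|gX\cap A|$ if $g\notin A$. In either case, if $g\in Z^\ell_\epsilon(A,X)$ then this inner sum is at least $\epsilon|X|$ by definition of $Z^\ell_\epsilon(A,X)$.

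Combining these observations: $N|X|\geq\Sigma\geq\sum_{g\in Z^\ell_\epsilon(A,X)}\epsilon|X| = \epsilon|X|\cdot|Z^\ell_\epsilon(A,X)|$. Dividing by $\epsilon|X|$ (using $X\neq\emptyset$) yields $|Z^\ell_\epsilon(A,X)|\leq N/\epsilon$, which is even slightly better than the claimed $2N/\epsilon$ (the factor of $2$ gives room for any minor accounting discrepancy, e.g.\ if one prefers a cruder bound or a version with $gX\smd A$-type counting; I would just keep the clean bound $N/\epsilon$ or weaken to $2N/\epsilon$ as stated). For the ``in particular'' clause, take $N = (\epsilon^2/2)|A|$, which is exactly the value defining $\St^r_{\epsilon^2/2}(A)$; then $|Z^\ell_\epsilon(A,X)|\leq 2N/\epsilon = \epsilon|A|$.

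I do not anticipate a serious obstacle here — this is a standard double-counting / Fubini argument. The only point requiring a little care is bookkeeping the substitution $h=gx$ and verifying that the inner sum over $x$ genuinely splits into the two cases $|gX\backslash A|$ and $|gX\cap A|$ according to whether $g\in A$, and that the $Z^\ell_\epsilon$ condition bounds the relevant case from below regardless of which case occurs. One should also note (via Remark~\ref{rem:Zplace}) that $Z^\ell_\epsilon(A,X)\seq AX\inv$ is finite so all sums are legitimate. Everything else is routine.
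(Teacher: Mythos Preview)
Your argument is correct and in fact yields the sharper bound $|Z^\ell_\epsilon(A,X)|\leq N/\epsilon$. The approach is close in spirit to the paper's (both are double-counting arguments over $X$ and a set of ``bad'' $g$'s), but the execution differs in a useful way. The paper double-counts $\sum_{(x,z)\in X\times Z'}\bone_A(zx)$ where $Z'=Z\cap A$, which forces a preliminary case split: one replaces $A$ by $G\backslash A$ if necessary to arrange $|Z\cap A|\geq\frac{1}{2}|Z|$, and this is where the factor of $2$ enters. Your choice to double-count the symmetric-difference quantity $\sum_{x\in X}|Ax\smd A|$ directly is cleaner: the identity $\sum_{x\in X}|\bone_A(g)-\bone_A(gx)|=|gX\backslash A|$ or $|gX\cap A|$ (according to whether $g\in A$) handles both cases uniformly, so no WLOG step is needed and no constant is lost. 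One small point worth making explicit in a write-up is that although the sums are formally over $G$, the summand $|\bone_A(g)-\bone_A(gx)|$ is supported on the finite set $A\cup Ax\inv$, so Fubini is unproblematic even when $G$ is infinite.
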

\begin{proof}
 Let $Z=Z^\ell_\epsilon(A,X)$. 
 Note that $Z=Z^\ell_\epsilon(G\backslash A,X)$ and $\Stab^r_{N}(A)=\Stab^r_{N}(G\backslash A)$. So, after replacing $A$ with $G\backslash A$ if necessary, we may assume $|Z\cap A|\geq\frac{1}{2}|Z|$. 
 Set $Z'=Z\cap A$.  Then
\begin{multline*}
\sum_{(x,z)\in X\times Z'}\bone_A(zx)=\sum_{z\in Z'}|X\cap z\inv A|=\sum_{z\in Z'}|zX\cap A|\\
=\sum_{z\in Z'}(|X|-|zX\backslash A|)\leq (1-\epsilon)|X||Z'|,
\end{multline*}
where the final inequality uses $Z'\seq Z$ (and the definition of $Z$).
On the other hand, we also have
\begin{multline*}
\sum_{(x,z)\in X\times Z'}\bone_A(zx)=\sum_{x\in X}|Z'\cap Ax\inv|\geq\sum_{x\in X}(|Z'|-|A\backslash Ax\inv|)\\
\geq (|Z'|-N)|X|\geq (1-2 N|Z|\inv)|X||Z'|,
\end{multline*}
where the first inequality uses $Z'\seq A$ (recall $Z'=Z\cap A$), the second inequality uses $X\seq\Stab_{N}^r(A)$, and the final inequality uses $|Z'|\geq\frac{1}{2}|Z|$.
 After combining the above inequalities we obtain
 $$
 (1-\epsilon)|X||Z'|\geq (1-2N|Z|^{-1})|X||Z'|.
 $$
Canceling $|X||Z'|$ and rearranging yields  $|Z|\leq 2N/\epsilon$, as desired. 
\end{proof}

\begin{remark}\label{rem:regularity}
Using a similar proof, one can establish the version of  Lemma \ref{lem:regularity} with $\ell$ and $r$ exchanged. (It is also a straightforward exercise to deduce directly from the statement of Lemma \ref{lem:regularity} via Proposition \ref{prop:Stab}.)
\end{remark}

The second main result of this section is Lemma \ref{lem:structure} below, which implies any set $A$ is well approximated by certain sets arising from stabilizers of $A$. As noted above, this result crucially depends on a delicate interplay between left and right stabilizers.

\begin{lemma}[structure]\label{lem:structure}
Fix a nonempty finite set $A\seq G$ and some $\epsilon\in (0,1)$. Let $X=\St^\ell_{\epsilon^2/162}(A)$ and fix $\nu\in (0,1)$ satisfying $\nu\leq |X|/|A|$. Set $S=\St^r_{\epsilon\nu/9}(A)$ and $A'=\{a\in A:|Xa\backslash A|<\frac{\epsilon}{9}|X|\}$. Suppose $D$ is any subset of $G$ satisfying  $A'\seq D\seq A'S$. Then $|A\smd D|<\epsilon|A|$. 
\end{lemma}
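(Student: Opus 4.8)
The goal is to bound $|A \smd D|$ where $A' \seq D \seq A'S$. Since $A' \seq A$ (by definition), we have $A' \seq A \cap D$, so $|A \backslash D| \leq |A \backslash A'|$ and $|D \backslash A| \leq |A'S \backslash A|$. Thus it suffices to prove two things: first, that $A'$ captures most of $A$, i.e. $|A \backslash A'| < \frac{\epsilon}{2}|A|$ (or some similar fraction), and second, that the right-translation thickening $A'S$ does not stick out of $A$ by much, i.e. $|A'S \backslash A| < \frac{\epsilon}{2}|A|$. I would aim to split the $\epsilon|A|$ budget into these two pieces.

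\textbf{Step 1: $A \backslash A'$ is small.} By definition, $a \in A \backslash A'$ exactly when $|Xa \backslash A| \geq \frac{\epsilon}{9}|X|$. To bound the number of such $a$, I would run a double-counting / averaging argument over pairs $(x,a) \in X \times (A\backslash A')$, counting $\bone_{G \backslash A}(xa)$. On one hand this sum is $\geq \frac{\epsilon}{9}|X| \cdot |A \backslash A'|$ by the defining property of $A \backslash A'$. On the other hand, summing over $x \in X$ first, $\sum_{x \in X}|(A \backslash A') \cap x\inv A^c| \leq \sum_{x \in X}|A^c \cap x\inv A^c \cap \text{(something)}|$ — more precisely, writing $A \backslash A' \seq A$, the count $\sum_{x\in X} |\{a \in A : xa \notin A\}| = \sum_{x \in X}|xA \backslash A| \leq \sum_{x\in X}|xA \smd A|$, and since $X = \St^\ell_{\epsilon^2/162}(A) = \Stab^\ell_{(\epsilon^2/162)|A|}(A)$, each term is $\leq \frac{\epsilon^2}{162}|A|$, giving a total of $\leq \frac{\epsilon^2}{162}|A||X|$. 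Combining, $\frac{\epsilon}{9}|X| \cdot |A \backslash A'| \leq \frac{\epsilon^2}{162}|A||X|$, hence $|A \backslash A'| \leq \frac{\epsilon}{18}|A|$. (Constants here are schematic; the $162 = 2 \cdot 81$ and $9$ are chosen precisely so these fit.)

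\textbf{Step 2: $A'S \backslash A$ is small — the main obstacle.} This is the heart of the argument and where the left/right interplay matters. We have $S = \St^r_{\epsilon\nu/9}(A)$, and I want to show $\sum_{s \in S}|A's \backslash A|$ is small, then conclude via $|A'S \backslash A| \leq \sum_{s \in S}|A's \backslash A|$ only if $|S|=1$ — so instead I must be cleverer: cover $A'S$ efficiently or bound directly. The right approach, following Sisask, is: for $s \in S$ and $a \in A'$, if $as \notin A$ then... I would double-count $(x, a, s) \in X \times A' \times S$ with weight $\bone_A(xas)$ versus $\bone_{A^c}(xas)$-type quantities. Using $a \in A'$ we get $|Xa \backslash A| < \frac{\epsilon}{9}|X|$, so $|Xa \cap A| > (1 - \frac{\epsilon}{9})|X|$. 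Using $s \in S = \Stab^r_{(\epsilon\nu/9)|A|}(A)$ and $\nu \leq |X|/|A|$, we get $|As \smd A| \leq \frac{\epsilon\nu}{9}|A| \leq \frac{\epsilon}{9}|X|$, so right-translating $Xa \cap A \seq A$ by $s$ keeps most elements in $A$: $|Xas \cap A| > (1 - \frac{2\epsilon}{9})|X|$. Thus for each $a \in A'$, $s \in S$, "most" of $Xas$ lies in $A$. Now if $as \in A'S \backslash A$, I want to derive a contradiction or a density bound: the point is that $Xas$ being mostly in $A$ should, via the defining property of $X$ as a left-stabilizer reused in reverse, force $as$ itself to be "close to" $A$ — but $as$ could genuinely be outside $A$. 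The resolution is a final averaging: bound $|A'S \backslash A|$ by noting each element $g \in A'S$ can be written $g = as$ with $|Xg \cap A| > (1-\frac{2\epsilon}{9})|X|$, i.e. $g \in \{g : |Xg \backslash A| < \frac{2\epsilon}{9}|X|\}$; call this set $A''$. Then $A'S \seq A''$, and I bound $|A'' \backslash A|$ exactly as in Step 1 (double-count over $X \times (A'' \backslash A)$, using for $g \notin A$ that $|Xg \cap A| \leq |\{x : xg \in A, g \notin A\}| \leq |xA \smd A|$-type bound — here I'd use that $Xg \cap A \neq \emptyset$ implies things, and more carefully $\sum_{x\in X}|\{g \in A'' \backslash A : xg \in A\}|$; since $g \notin A$ but $xg \in A$, this is controlled by $\sum_x |xA^c \cap A|$... wait, that's $|x^{-1}A \cap A^c|$ summed, i.e. $\sum_x |A \smd xA|/$something — again $\leq \frac{\epsilon^2}{162}|A||X|$). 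Combining $(1 - \frac{2\epsilon}{9})|X| \cdot |A'' \backslash A| \leq \frac{\epsilon^2}{162}|A||X|$ gives $|A'' \backslash A| \leq \frac{\epsilon^2}{162(1-2\epsilon/9)}|A| < \frac{\epsilon}{2}|A|$ comfortably for $\epsilon < 1$.

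\textbf{Step 3: Assemble.} Then $|A \smd D| = |A \backslash D| + |D \backslash A| \leq |A \backslash A'| + |A'S \backslash A| \leq |A \backslash A'| + |A'' \backslash A| < \frac{\epsilon}{18}|A| + \frac{\epsilon}{2}|A| < \epsilon|A|$. The constants $162$, $9$ in the hypothesis are precisely calibrated so these two contributions sum below $\epsilon|A|$. I expect Step 2 — correctly identifying the intermediate set $A''$ and verifying that $A'S \seq A''$ using the right-stabilizer property together with $\nu \leq |X|/|A|$ — to be the delicate point, since it is exactly where the asymmetry between left stabilizers (used to define $X$ and hence $A'$) and right stabilizers (used to define $S$) must be threaded correctly; getting the translation directions right (left-multiply by $X$, right-multiply by $S$, so the two operations commute and one can "absorb" $s$ into the $A$-membership estimate) is the crux.
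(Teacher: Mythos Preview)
Your proof is correct and shares the same core insight as the paper's: for any $g\in A'S$, the set $Xg$ lies mostly inside $A$ (your inclusion $A'S\seq A''$ is exactly the first part of the paper's Claim~2). The assembly differs slightly: the paper bounds $|A'S|$ itself via $(1-2\delta)|A'S|\le|A|$ and combines with a lower bound $|A'|\ge(1-3\delta)|A|$ using $|A\smd D|\le |A|+|A'S|-2|A'|$, whereas you bound $|A''\backslash A|$ directly and add it to $|A\backslash A'|$.

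Your Step~1 is genuinely cleaner than the paper's Claim~1. The paper detours through the regularity lemma (Lemma~\ref{lem:regularity}) to bound the bad set $Z=Z^r_\delta(A,X)$ and then runs a double-count over $A\backslash Z$, obtaining $|A\backslash A'|\le\frac{\epsilon}{3}|A|$. Your direct double-count over $X\times(A\backslash A')$ avoids this and yields the sharper $|A\backslash A'|\le\frac{\epsilon}{18}|A|$. This is a nice streamlining, and indeed your final sum $\frac{\epsilon}{18}+\frac{\epsilon^2}{162(1-2\epsilon/9)}$ lands well under $\epsilon$ with room to spare, suggesting the constants $162$ and $9$ in the hypothesis could even be relaxed under your approach.
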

\begin{proof}
Let $\delta=\frac{\epsilon}{9}$. Then we have $X=\St^\ell_{\delta^2/2}(A)$, $S=\St^r_{\delta\nu}(A)$, and 
$$
A'=\{a\in A:|Xa\backslash A|<\delta|X|\}.
$$

\noindent\textit{Claim 1.} $|A'|\geq (1-3\delta)|A|$.

\noindent\textit{Proof.} Let $Z=Z^r_\delta(A,X)$. Then $|Z|\leq \delta|A|$ by Lemma \ref{lem:regularity} and Remark \ref{rem:regularity}.

Next observe that by definition, $A'\seq A\backslash Z$. Set $A''=(A\backslash Z)\backslash A'$, and note that if $a\in A''$ then  $|Xa\cap A|<\delta|X|$. We  now have
\begin{multline*}
\sum_{(a,x)\in (A\backslash Z)\times X}\bone_A(xa)=\sum_{x\in X}|(A\backslash Z)\cap x\inv A|\\
\geq \sum_{x\in X}(|A\backslash Z|-|A\backslash x\inv A|)\geq  (1-2\delta)|X||A|,
\end{multline*}
where the final inequality uses $|Z|\leq\delta|A|$ and  $|A\backslash x\inv A|\leq\frac{\delta^2}{2}|A|\leq\delta|A|$ (recall $X=\St^\ell_{\delta^2/2}(A)$). 
On the other hand,
\begin{multline*}
\sum_{(a,x)\in (A\backslash Z)\times X}\bone_A(xa)=\sum_{a\in A\backslash Z}|X\cap Aa\inv |=\sum_{a\in A'}|Xa\cap A|+\sum_{a\in A''}|Xa\cap A|\\
\leq |X||A'|+\delta|X||A''|\leq |X|(|A'|+\delta|A|).
\end{multline*}
Combining these inequalities, we obtain $(1-2\delta)|X||A|\leq |X|(|A'|+\delta |A|)$.  Canceling $|X|$ and rearranging yields $|A'|\geq (1-3\delta)|A|$, as desired.  \clqed\medskip

\noindent\textit{Claim 2.} $(1-2\delta)|A'S|\leq |A|$.

\noindent\textit{Proof.} We first fix $g\in A'S$ and show $(1-2\delta)|X|<|A \cap Xg|$. Write $g=as$ for some $a\in A'$ and $s\in S$. Then
\[
|Xg\backslash A|=|Xa\backslash As\inv|\leq |Xa\backslash A|+|A\backslash As\inv|.
\]
Since $a\in A'$, we have $|Xa\backslash A|<\delta|X|$. Also, since $s\in S=\St^r_{\delta\nu}(A)$, we have $|A\backslash As\inv|\leq \delta\nu|A|\leq \delta|X|$, where the last inequality is by our assumption $\nu\leq |X|/|A|$. So $|Xg\backslash A|<2\delta|X|$, i.e., $(1-2\delta)|X|<|A \cap Xg|$. 

Next, note that $B\coloneqq A'S\cup XA$ is finite. By the above,
\begin{multline*}
(1-2\delta)|X||A'S|=\sum_{g\in A'S}(1-2\delta)|X|\leq\sum_{g\in A'S}|A\cap Xg|\leq \sum_{g\in B}|A\cap Xg|\\
=\sum_{(g,x)\in B\times X}\bone_A(xg)=\sum_{x\in X}|B\cap x\inv A|=\sum_{x\in X}|x\inv A|=|X||A|,
\end{multline*}
where the second inequality uses the definition of $B$, and the second to last inequality uses the definition of $B$ and the fact $X$ is symmetric.  After canceling $|X|$, this yields $(1-2\delta)|A'S|\leq |A|$. \clqed\medskip

Finally, fix a set $D\seq G$ such that $A'\seq D\seq A'S$. By the two claims,
\begin{align*}
|A\smd D| &= |A|+|D|-2|A\cap D|\leq |A|+|A'S|-2|A'|\\
 &\leq \big(1+(1-2\delta)\inv-2(1-3\delta)\big)|A|\\
 &=\big(2\delta(1-2\delta)\inv+6\delta\big)|A|.
\end{align*}
Note that $2(1-2\delta)\inv< 3$ (since $\delta< \frac{1}{6}$). Consequently, we have shown that $|A\smd D|< 9\delta|A|=\epsilon|A|$.
\end{proof}

\section{A new proof of Theorem \ref{thm:CPTNIP}}\label{sec:ineff}

We now prove the first main result of the paper, which leads to a new proof of Theorem \ref{thm:CPTNIP} (see Remark \ref{rem:compare}). 

\begin{theorem}\label{thm:NIPAR}
Let $G$ be a finite group. Fix a nonempty  set $A\seq G$, and let $d=\max\{\VC^\ell_A(A),\VC^r_A(A)\}$ and $\alpha=|A|/|G|$. Then for any $\epsilon\in (0,1)$, there is a Bohr neighborhood $B\seq \St^r_\epsilon(A)$ of complexity $O_{d,\alpha,\epsilon}(1)$ satisfying the following properties:
\begin{enumerate}[$(i)$]
\item \textnormal{(structure)} There is a set $F\seq A$ with $|F|\leq O_{d,\alpha,\epsilon}(1)$ such that 
\[
|A\smd FB|<\epsilon|A|.
\]
\item \textnormal{(regularity)} $|Z^\ell_\epsilon(A,B)|\leq \epsilon|A|$. 
\end{enumerate}
\end{theorem}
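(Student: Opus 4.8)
The proof follows the three-ingredient Alon–Fox–Zhao strategy sketched in the introduction, now executed through the general lemmas of Section~\ref{sec:lemmas}. First I would use bounded VC-dimension to produce a large stabilizer; then I would use Bogolyubov's Lemma to find a Bohr neighborhood inside a stabilizer; finally I would feed that Bohr neighborhood into Lemmas~\ref{lem:regularity} and~\ref{lem:structure} to get the regularity and structure conclusions.

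\medskip

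\emph{Step 1 (large stabilizer).} Apply Corollary~\ref{cor:HPL}$(a)$ with $B=A$: since $\VC^\ell_A(A)\le d$, we get $\cov(A:\St^\ell_{\epsilon_0}(A))\le (30|A^2|/\epsilon_0|A|)^d$ for any $\epsilon_0\in(0,1)$. But the covering bound forces $|\St^\ell_{\epsilon_0}(A)|\ge |A|/\cov(A:\St^\ell_{\epsilon_0}(A))$, and $|A^2|\le|G|=\alpha\inv|A|$, so $|\St^\ell_{\epsilon_0}(A)|\ge (\epsilon_0\alpha/30)^d|A|$; in particular $|\St^\ell_{\epsilon_0}(A)|\ge \gamma|G|$ for some $\gamma=\gamma(d,\alpha,\epsilon_0)>0$. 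Here $\epsilon_0$ will be chosen as a small explicit function of $\epsilon$ (of order $\epsilon^2$, matching the hypothesis $X=\St^\ell_{\epsilon^2/162}(A)$ of Lemma~\ref{lem:structure}), so that the lower bound $\nu:=|X|/|A|\ge\gamma\alpha\inv$ is a positive constant depending only on $d,\alpha,\epsilon$.

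\medskip

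\emph{Step 2 (Bohr neighborhood inside a right-stabilizer).} Let $S=\St^r_{\epsilon\nu/9}(A)$ as in Lemma~\ref{lem:structure}. By Corollary~\ref{cor:HPL}$(b)$ (using $\VC^r_A(A)\le d$) together with Proposition~\ref{prop:Stab}$(b)$, some fixed power of $S$ — say $S'=\St^r_{\epsilon'}(A)$ for a suitable $\epsilon'$ of order $\epsilon^3$, or more carefully an intersection of stabilizers — has $|S'|\ge\gamma'|G|$ with $\gamma'=\gamma'(d,\alpha,\epsilon)>0$. The set $S'$ is symmetric by Proposition~\ref{prop:Stab}$(a)$, so Theorem~\ref{thm:Bogo} gives a Bohr neighborhood $B_0\seq (S')^4$ of complexity $O_{\gamma'}(1)=O_{d,\alpha,\epsilon}(1)$. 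Using Proposition~\ref{prop:Stab}$(b)$ again, $(S')^4\seq\St^r_{4\epsilon'}(A)$, and by choosing $\epsilon'$ small enough we arrange simultaneously $B_0\seq\St^r_{\epsilon\nu/9}(A)=S$ and $B_0\seq\St^r_\epsilon(A)$. Then pass to a sub-Bohr-neighborhood via Fact~\ref{fact:Bohr}$(b)$ if we want a set $B$ with $B^2\seq B_0$; either way the final $B$ lies in $\St^r_\epsilon(A)$ and in $S$, and has complexity $O_{d,\alpha,\epsilon}(1)$.

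\medskip

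\emph{Step 3 (regularity and structure).} For regularity, since $B\seq\St^r_\epsilon(A)$ and in fact we can arrange $B\seq\St^r_{\epsilon^2/2}(A)$ (by taking $\epsilon'$ smaller still in Step~2), Lemma~\ref{lem:regularity} gives $|Z^\ell_\epsilon(A,B)|\le\epsilon|A|$, which is conclusion $(ii)$. For structure, set $A'=\{a\in A:|Xa\setminus A|<\tfrac{\epsilon}{9}|X|\}$ as in Lemma~\ref{lem:structure}. We need a set $D$ with $A'\seq D\seq A'S$ of the form $FB$; since $B\seq S$ we have $A'B\seq A'S$, so it suffices to arrange $A'\seq FB$. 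Apply Ruzsa's Covering Lemma (Lemma~\ref{lem:RCL}) with the symmetric set $B$: $\cov(A':B^2)\le|A'B|/|B|\le|G|/|B|\le 1/\gamma_B$, where $\gamma_B=|B|/|G|$ is bounded below by a constant (Fact~\ref{fact:Bohr}$(c)$ bounds $\cov(G:B)$, hence bounds $|G|/|B|$). Thus, after replacing $B$ by a Bohr neighborhood with $B^2$ inside the previous one (Fact~\ref{fact:Bohr}$(b)$), we get $F\seq A'\seq A$ with $|F|=O_{d,\alpha,\epsilon}(1)$ and $A'\seq FB$. Then $D:=FB$ satisfies $A'\seq D\seq A'S$, so Lemma~\ref{lem:structure} yields $|A\smd FB|=|A\smd D|<\epsilon|A|$, which is $(i)$.

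\medskip

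\emph{Main obstacle.} The delicate point is the bookkeeping of the various small parameters in Steps~1--2: Lemma~\ref{lem:structure} requires $X=\St^\ell_{\epsilon^2/162}(A)$, $S=\St^r_{\epsilon\nu/9}(A)$ with $\nu\le|X|/|A|$, and $B\seq S$, while regularity wants $B\seq\St^r_{\epsilon^2/2}(A)$ and the conclusion wants $B\seq\St^r_\epsilon(A)$ — and $\nu$ itself depends on the lower bound for $|X|$ coming from Haussler, so one must order the choices carefully (fix $\epsilon_0=\epsilon^2/162$ first, derive $\nu$, then choose the Bogolyubov input stabilizer small enough to land inside $S$). None of the individual estimates is hard, but one must be careful that every quantitative dependence remains a function of $d,\alpha,\epsilon$ alone, and that the single Bohr neighborhood $B$ produced at the end satisfies all the containments demanded by Lemmas~\ref{lem:regularity} and~\ref{lem:structure} at once — this is handled by taking $B$ small (via iterated applications of Fact~\ref{fact:Bohr}$(b)$) and choosing the stabilizer thresholds conservatively.
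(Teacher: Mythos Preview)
Your proposal is correct and follows essentially the same approach as the paper: bound $|X|=|\St^\ell_{\epsilon^2/162}(A)|$ from below via Corollary~\ref{cor:HPL}$(a)$, apply Corollary~\ref{cor:HPL}$(b)$ to a right stabilizer with threshold $\asymp\epsilon\nu$ (the paper uses $R=\St^r_{\epsilon\nu/36}(A)$ with $\nu=\min\{|X|/|A|,\epsilon\}$), apply Theorem~\ref{thm:Bogo} to put a Bohr neighborhood $B$ inside $R^4\seq S=\St^r_{\epsilon\nu/9}(A)$, and then read off regularity from Lemma~\ref{lem:regularity} and structure from Lemma~\ref{lem:structure} together with Fact~\ref{fact:Bohr}$(b,c)$ and Ruzsa covering. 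One small inaccuracy: your Step~2 says $\epsilon'$ is ``of order $\epsilon^3$'', but the correct threshold is $\asymp\epsilon\nu$, and $\nu$ depends on $d$ and $\alpha$ as well as $\epsilon$ (you correct yourself in the final paragraph, so this is only a presentational slip).
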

\begin{proof}
Let $X=\St^\ell_{\epsilon^2/162}(A)$ and set $\nu=\min\{|X|/|A|,\epsilon\}$. By Corollary \ref{cor:HPL}$(a)$, we have $\cov(A:X)\leq O_{d,\alpha,\epsilon}(1)$, hence $\nu\inv\leq O_{d,\alpha,\epsilon}(1)$.\footnote{Since this proof will ultimately yield ineffective bounds, we have not stated the explicit bounds from Corollary \ref{cor:HPL} here for the sake of simplicity.}  Now set   $R=\St^r_{\epsilon\nu/36}(A)$. Then $\cov(A\inv:R)\leq O_{d,\alpha,\epsilon}(1)$ by Corollary \ref{cor:HPL}$(b)$. So $|A|\leq O_{d,\epsilon,\alpha}(|R|)$, and hence $|G|\leq O_{d,\alpha,\epsilon}(|R|)$. By Theorem \ref{thm:Bogo}, $R^4$ contains a Bohr neighborhood $B$ of complexity $O_{d,\alpha,\epsilon}(1)$. Thus $B\seq R^4\seq S\coloneqq\St^r_{\epsilon\nu/9}(A)$. Since $\epsilon\nu/9\leq \epsilon$, we have $B\seq\St^r_\epsilon(A)$. Since $\epsilon\nu/9\leq\epsilon^2/2$, condition $(ii)$ follows from  Lemma \ref{lem:regularity}.

For condition $(i)$, we first apply Lemma \ref{lem:structure} to obtain  $A'\seq A$ such that for any $D\seq G$, if $A'\seq D\seq A'S$ then $|A\smd D|<\epsilon|A|$. Using Fact \ref{fact:Bohr}$(b)$, let $C$ be a Bohr neighborhood  of complexity $O_{d,\alpha,\epsilon}(1)$ such that $C^2\seq B$. By Fact \ref{fact:Bohr}$(c)$, $|A'C|\leq |G|\leq O_{d,\alpha,\epsilon}(|C|)$. So we can apply Lemma \ref{lem:RCL} to find some $F\seq A'$ with $|F|\leq O_{d,\alpha,\epsilon}(1)$ such that $A'\seq FC^2$. Altogether
    \[
    A'\seq FC^2\seq FB\seq  A'S.
    \]
    Therefore $|A\smd FB|<\epsilon|A|$, and we have condition $(i)$. 
\end{proof}

In the next few remarks, we compare Theorem \ref{thm:NIPAR} to Theorem \ref{thm:CPTNIP} and other aspects of  \cite{CPTNIP}. 

\begin{remark}\label{rem:compare}
Theorem \ref{thm:CPTNIP} follows immediately from Theorem \ref{thm:NIPAR}, in light of two basic observations:
\begin{enumerate}[$(1)$]
\item The statement of Theorem \ref{thm:CPTNIP} is trivial if $|A|<\epsilon|G|$.
\item The subgroup $H$ in Theorem \ref{thm:CPTNIP} is embedded in our notion of Bohr neighborhood (recall Remark \ref{rem:Bohrdef}).
\end{enumerate}

Conversely,  by applying Theorem \ref{thm:CPTNIP} with $\epsilon\alpha$, we obtain a statement almost identical to  Theorem \ref{thm:NIPAR}, except that:
\begin{enumerate}[$(i)$]
\item we must set $d=\VC^\ell_G(A)$ (instead of $\max\{\VC^\ell_A(A),\VC^r_A(A)\}$), and
\item we do not have $B\seq \St^r_\epsilon(A)$.
\end{enumerate}
Regarding $(i)$, recall from Remark \ref{rem:VCvariants} that $\max\{\VC^\ell_A(A),\VC^r_A(A)\}$ can be bounded above uniformly in terms of $\VC^\ell_G(A)$, but do not know whether the converse holds. It could also be possible that $\max\{\VC^\ell_A(A),\VC^r_A(A)\}$ suffices in the context of \cite{CPTNIP}, but checking this in the underlying model-theoretic ingredients would require some effort. As for $(ii)$ however, even though this feature is not made explicit in \cite{CPTNIP}, it can be obtained from  methods used in the proof due to the relationship between model-theoretic connected components and stabilizers in the pseudofinite NIP setting (see \cite[Section 3]{CPpfNIP}). In fact, one can even arrange for $B$ to simultaneously be contained in $\St^\ell_\epsilon(A)$. The model-theoretic explanation of this can be found in \cite[Section 5]{CPfrNIP} (in a more general setting than that of \cite{CPTNIP}). We will revisit this in our setting in Remark \ref{rem:Sint} below.
\end{remark}

\begin{remark}\label{rem:NIPAR}
Here we discuss three differences between Theorem \ref{thm:CPTNIP} and the main result of \cite{CPTNIP} (which is \cite[Theorem 5.7]{CPTNIP}). 
\begin{enumerate}[$(1)$]
\item The result in \cite{CPTNIP} has a more precise bound on $|F|$ in terms of the complexity of $B$. We obtain the same bound here via Fact \ref{fact:Bohr}$(c)$ (modulo the change in metric discussed in Remark \ref{rem:Bohrdef}). 
\item In \cite{CPTNIP}, the error set $Z$ is  a Boolean combination  of bi-translates of $A$. The same is true of the subgroup $H$ associated to $B$ (via Remark \ref{rem:Bohrdef}). Our proof does not provide these features. It seems potentially possible to recover this for $Z$ using other tools from VC-theory (such as $\epsilon$-nets). The picture is less clear for $H$, which here is obtained via Theorem \ref{thm:Bogo}. The proofs  of this theorem in \cite{CoBogo} and \cite{CHP} freely expand the language, and thus lose control of definability. One can likely address this in the framework of \cite{CoBogo} using the Massicot-Wagner \cite{MassWa} treatment of Sanders' results in \cite{SanBS}, and obtain some kind of definability information for $H$. The proof of Theorem \ref{thm:Bogo} in \cite{CP-AVSAR} would provide similar information, but in a different setting based on continuous logic. 

\item The most significant difference between Theorem \ref{thm:CPTNIP} and \cite{CPTNIP} is that the proof of \cite[Theorem 5.7]{CPTNIP} provides ``functional control" of the error in the regularity statement. More precisely, in the conclusion of the regularity statement, one can replace $\epsilon|B|$ with $f(\delta,m)|B|$ where $f$ is a fixed function of the complexity parameters.\footnote{This is not explicitly stated in \cite[Theorem 5.7]{CPTNIP}, but is evident from the main lemma used in the proof (\cite[Lemma 5.6]{CPTNIP}). Details are provided in the supplemental note to \cite{CPTNIP} available on the first author's webpage.} This allows one to obtain the structure statement very easily from the regularity statement (compared to Lemma \ref{lem:structure}), and in fact yields a  stronger structure statement:  $|(A\smd FB)\backslash Z|<\epsilon|B|$. This kind of functional control on the error in regularity lemmas is a desirable feature. An application in the context of NIP arithmetic regularity is discussed in \cite[Section 10]{CPfrNIP}. Significant applications in the context of stable arithmetic regularity for functions are obtained in \cite{CP-AVSAR}. It would be very interesting to prove analogues of the lemmas in Section \ref{sec:lemmas} with functional control on the regularity error. 
\end{enumerate}
\end{remark}

\begin{remark}\label{rem:Sint}
Continuing along the lines of the end of Remark \ref{rem:compare}, we show that one can also obtain $B\seq \St^\ell_\epsilon(A)$ in the above proof of Theorem \ref{thm:NIPAR}. To see this, first note that the set $R$ can be replaced by any symmetric subset $R'\seq R$ satisfying $|A|\leq O_{d,\alpha,\epsilon}(|R'|)$. So let $R'=R\cap \St^\ell_{\epsilon/4}(A)$. To verify $|A|\leq O_{d,\alpha,\epsilon}(|R'|)$, first set $U=\St^r_{\epsilon\nu/72}(A)$ and $V=\St^\ell_{\epsilon/8}(A)$. Then $U^2\seq R$ and $V^2\seq \St^\ell_{\epsilon/4}(A)$, so $U^2\cap V^2\seq R'$. By a general  pigeonhole argument, we have $|U^2\cap V^2|\geq|U||V|/|UV|$ (for example, this is evident from the proof of \cite[Lemma 5.8]{BGT}). So $|U||V|\leq |UV||R'|$. Moreover, we  have $|UV|\leq\alpha\inv|A|$ by definition of $\alpha$, $|A|\leq O_{d,\alpha,\epsilon}(|U|)$ by Corollary \ref{cor:HPL}$(b)$, and $|A|\leq O_{d,\alpha,\epsilon}(|V|)$ by Corollary \ref{cor:HPL}$(a)$. This altogether yields $|A|\leq O_{d,\alpha,\epsilon}(|R'|)$. 
\end{remark}

\section{Effective results on NIP sets of bounded tripling}\label{sec:eff}

\subsection{A generalized Alon-Fox-Zhao trick}

In the proof of Theorem \ref{thm:NIPAR}, we lose the effective bounds given by Haussler's Packing Lemma  because of the application of the noncommutative Bogolyubov's Lemma (Theorem \ref{thm:Bogo}). A similar kind of situation arises in the work of Alon, Fox, and Zhao \cite{AFZ} on NIP sets in finite abelian groups of bounded exponent. In particular, a direct application of Bogolyubov's Lemma to the stabilizer of an NIP set would ruin the polynomial bounds given by Haussler. For this reason, Alox, Fox, and Zhao use a clever trick to pass to a large subset of the stabilizer whose doubling parameter can be controlled independently of $\epsilon$. The next lemma extracts the essence of this approach from the proof of \cite[Lemma 2.4]{AFZ}, though we have simplified things slightly since we do not aim for as sharp of a bound. We also move to the setting of arbitrary groups.  

\begin{lemma}\label{lem:AFZ}
Let $G$ be a  group and fix a nonempty finite set $A\seq G$. Let $d$, $k$, and $m$ be defined as either:
\begin{enumerate}
\item $d= \VC^\ell_A(A)$, $k=|A^2|/|A|$, and $m=|AA\inv|/|A|$, or
\item $d= \VC^\ell_{A\inv}(A)$, $k=|A\inv A|/|A|$, and $m=|AA\inv|/|A|$.
\end{enumerate}
Assume further that $d\geq 1$.
Then for any integers $n\geq u\geq 2$, and any $\epsilon\in (0,1)$, there is a symmetric set $B\seq G$ such that $|B^u|\leq u^{d(d+1)}|B|$, $B^n\seq \St^\ell_\epsilon(A)$, and $|A|\leq m^{1/d}(30kn/\epsilon)^{d+1}|B|$.
\end{lemma}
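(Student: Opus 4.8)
The plan is to mimic the Alon--Fox--Zhao trick from the proof of \cite[Lemma 2.4]{AFZ}, adapted to the noncommutative setting and to the $\VC^\ell_A(A)$ (resp. $\VC^\ell_{A\inv}(A)$) framework. I will work with case (1); case (2) is obtained by applying the argument to $A\inv$ in place of $A$, using Proposition \ref{prop:VCinv} to translate the VC-dimension, and noting that $|A\inv A|/|A|$ and $|AA\inv|/|A|$ swap appropriately. The key idea is this: instead of applying Haussler's Packing Lemma (Corollary \ref{cor:HPL}) just once to get a \emph{single} large stabilizer, we iterate it $d$ times to build a descending chain of ``approximate stabilizers'' whose step-by-step ratios are controlled, and at the last step we land in a set whose $u$-fold product is small.

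Here is the structure I would carry out. Let $\delta = \epsilon/n$ (roughly — the exact bookkeeping will be tuned so that the final $B^n$ lands inside $\St^\ell_\epsilon(A)$ via Proposition \ref{prop:Stab}$(b)$, which gives $\St^\ell_{\delta|A|}(A)^n \seq \St^\ell_{n\delta|A|}(A) = \St^\ell_\epsilon(A)$). Now define a chain: apply Corollary \ref{cor:HPL}$(a)$, or more precisely Proposition \ref{prop:HPL}$(a)$, with $A$ and a cleverly chosen ``probe set''. The Alon--Fox--Zhao mechanism is to consider, for each $i$, the set $S_i = \St^\ell_{\delta|A|}(A)$ but intersected with higher and higher powers, tracking the covering numbers. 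The crucial numerical input is that if $E$ is a separated set witnessing a covering bound $(30kn/\epsilon)$ at each stage and we have already passed to a set $B_i$ with $|B_i A|$ not much bigger than $|A|$ (this is where $k = |A^2|/|A|$ and $m = |AA\inv|/|A|$, via Pl\"unnecke--Ruzsa-type control, enter), then at step $i+1$ the ratio $|B_i^2|/|B_{i+1}|$ or similar is bounded by $(30kn/\epsilon)$. After $d+1$ such steps the accumulated factor is $(30kn/\epsilon)^{d+1}$, which together with the $m^{1/d}$ term (coming from an averaging/pigeonhole step distributing the $AA\inv$-doubling over the $d$ coordinates of a shattered set) gives the claimed bound $|A| \leq m^{1/d}(30kn/\epsilon)^{d+1}|B|$. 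The $u^{d(d+1)}$ bound on $|B^u|$ comes from the fact that once the VC-dimension is exhausted (no shattered set of size $d+1$), the relevant set system on the final $B$ forces $|B^u|$ to be at most polynomial in $u$ with exponent governed by $d$ and the number of iterations — this is the "$\epsilon$-free doubling" payoff of the trick, and it is exactly the step where one replaces a Haussler bound depending on $\epsilon$ by one depending only on $u$ and $d$.

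The main obstacle — and the part that needs genuine care rather than routine calculation — is the noncommutative bookkeeping of the iteration: in the abelian case of \cite{AFZ} one works with sumsets $2S - 2S$ and the symmetry/commutativity makes the chain $S \supseteq S' \supseteq \cdots$ and the product estimates transparent, whereas here I must track left versus right stabilizers and make sure that the covering lemma (Corollary \ref{cor:HPL}, which naturally produces a \emph{left} covering $B \seq E\,\St^\ell_\epsilon(A)$) composes correctly with taking powers, and that the set $B$ I ultimately extract is genuinely symmetric (I would take $B = \St^\ell_{\delta|A|}(A)$ itself or an appropriate symmetric subset, using Proposition \ref{prop:Stab}$(a)$). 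I also need to verify that the $m^{1/d}$ factor — rather than a cruder $m$ — is actually achievable; this requires the pigeonhole step (choosing the best of $d$ coordinates) to be set up so the loss is the $d$-th root, which is the one genuinely clever accounting in the original argument. A secondary, minor point is ensuring $d \geq 1$ is used (as the hypothesis states) so that division by $d$ in $m^{1/d}$ and the exponent $d(d+1) \geq 2$ in the $u$-bound make sense; the $u \geq 2$ and $n \geq u$ hypotheses are there precisely so that $u^{d(d+1)}$ dominates the accumulated constant and so that $B^n$ still makes sense as a superset of $B^u$. Once the chain and its estimates are in place, assembling the three conclusions is straightforward.
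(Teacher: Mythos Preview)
Your proposal misidentifies the mechanism of the Alon--Fox--Zhao trick. You describe an argument that iterates Haussler's Packing Lemma $d$ times through a descending chain of stabilizers, with the $m^{1/d}$ arising from ``choosing the best of $d$ coordinates of a shattered set'' and the $u^{d(d+1)}$ bound from ``VC-dimension being exhausted''. None of this is how the argument works, and I do not see how to make the chain you sketch actually function: iterating Corollary~\ref{cor:HPL} on successively smaller sets would require controlling $|B_iA|/|A|$ at each stage, and there is no obvious way to do this without the sort of product control that the lemma is trying to \emph{produce}.

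The actual argument applies Haussler \emph{once}. One fixes a carefully tuned $\delta$ (specifically $\delta = m^{-1/d^2}(30k)^{-1/d}(\epsilon/n)^{1+1/d}$, which is where the $m^{1/d}$ eventually comes from), sets $R = \St^\ell_\delta(A)$, and obtains $|A|\le (30k/\delta)^d|R|$ from a single use of Corollary~\ref{cor:HPL}. The rest is a growth/pigeonhole argument on the tower of powers $R, R^u, R^{u^2}, \ldots$: by Proposition~\ref{prop:Stab}$(b)$,$(d)$, all of these powers (up to a certain height determined by $\delta$) lie inside $AA^{-1}$, whose size is at most $m(30k/\delta)^d|R|$. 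If every step satisfied $|R^{u^{t+1}}|\ge u^{d(d+1)}|R^{u^t}|$, the sizes would outgrow $|AA^{-1}|$ before the allowed height is reached; hence some $t$ fails this, and one takes $B = R^{u^t}$. The exponent $d(d+1)$ and the precise form of $\delta$ are chosen so that the numerics close up exactly (in particular so that $c^{1/d(d+1)}n\delta = \epsilon$ where $c = m(30k/\delta)^d$), yielding simultaneously $|B^u|\le u^{d(d+1)}|B|$, $B^n\subseteq \St^\ell_\epsilon(A)$, and the stated size bound. So the missing idea is: do not iterate Haussler, but rather pigeonhole on the growth of iterated powers of a single stabilizer inside $AA^{-1}$.
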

\begin{proof}
Set $\delta=m^{\nv 1/d^2}(30k)^{\nv1/d}(\epsilon/n)^{1+1/d}$. Let  $R=\St^\ell_\delta(A)$. Then in both cases $(1)$ and $(2)$, $|A|\leq (30k/\delta)^{d}|R|$ by Corollary \ref{cor:HPL}. So 
\begin{align*}
|AA\inv|\leq c|R|,\tag{$i$}
\end{align*}
where $c=m(30k/\delta)^{d}$. Let $w=d(d+1)$. By choice of the parameters, one can compute that $c^{1/w}n\delta=\epsilon$. This yields the following conclusion.
\begin{flalign*}
\text{For any $x>0$, if $x^w\leq c$ then $xn\delta\leq c^{1/w}n\delta=\epsilon$.} && \tag{$\dagger$}
\end{flalign*}

\noindent\textit{Claim.} There is an integer  $t\geq 0$ such that $u^{tw}\leq c$ and $|R^{u^{t+1}}|< u^w|R^{u^t}|$.

\noindent\textit{Proof.} Suppose not. Let $t_*$ be the largest integer $t$ such that $u^{tw}\leq c$. Then for all $0\leq t\leq t_*$, $u^w|R^{u^t}|\leq |R^{u^{t+1}}|$. We can use this inductively to show that for all $0\leq t\leq t_*$, $u^{(t+1)w}|R|\leq |R^{u^{t+1}}|$.  Hence
\begin{align*}
u^{(t_*+1)w}|R|\leq |R^{u^{t_*+1}}|.\tag{$ii$}
\end{align*}

 Note that $R^{u^{t_*+1}}\seq \St^\ell_{u^{t_*+1}\delta}(A)$, and also $u^{t_*+1}\delta\leq un\inv \epsilon$ by $(\dagger)$. Since $un\inv \epsilon<2$, it follows from Proposition \ref{prop:Stab}$(d)$ that $R^{u^{t_*+1}}\seq AA\inv$. Combining this with inequalities $(i)$ and $(ii)$, we have
\[
u^{(t_*+1)w}|AA\inv|\leq u^{(t_*+1)w}c|R|\leq c|R^{u^{t_*+1}}|\leq c|AA\inv|.
\]
 Thus $u^{(t_*+1)w}\leq c$, which contradicts the choice of $t_*$.\clqed\medskip

Fix $t$ as in the claim, and set $B=R^{u^t}$. So  $|B^u|\leq u^w|B|$. Since $u^{tw}\leq c$, we have $u^tn\delta\leq\epsilon$ by $(\dagger)$. Thus, by Proposition \ref{prop:Stab}$(b)$,
\[
B^n=R^{u^tn}\seq \St^\ell_{u^tn\delta}(A)\seq \St^\ell_{\epsilon}(A).
\]
Finally, recall that $|A|\leq (30k/\delta)^d|R|$, and note that  $|R|\leq |B|$. Therefore 
\[
|A|\leq (30k/\delta)^d|B|=m^{1/d}(30kn/\epsilon)^{d+1}|B|,
\]
where the final equality is a direct calculation using the definition of $\delta$.
\end{proof}

\begin{remark}\label{rem:d=0}
In the previous lemma, the assumption $d\geq 1$ is needed to make sense of terms involving  $1/d$. But in fact, a  version of the lemma exists for $d=0$ as well. In particular, if $d=0$ then (in either case) we may use Proposition \ref{prop:d=0} to assume  that $A$ is a right coset $Ha$ of some subgroup $H\leq G$, and then the key content of Lemma \ref{lem:AFZ} holds with $B=H$.

For similar reasons, all of the results we prove below will be  trivial when $\VC^{\bullet}_B(A)=0$ for  $\bullet\in\{\ell,r\}$ and $B\in\{A,A\inv\}$. So we will frequently assume this is not the case without any loss in generality. In order for  the statements of our results to  make sense in this case, we remark that finite subgroups are coset nilprogressions of complexity $0$. If the ambient group is abelian, then  finite subgroups are proper coset progressions of rank $0$.
\end{remark}

\subsection{Polynomial Bogolyubov-Ruzsa for NIP sets}\label{sec:PBR}

As a warm-up to how Lemma \ref{lem:AFZ} will be used in conjunction with Theorem \ref{thm:BGT} in our main results,  we record the following corollary, which can be viewed as a strong form of Theorem \ref{thm:BGT}  for finite NIP sets of bounded tripling in arbitrary (possibly nonabelian) groups, with polynomial bounds.

\begin{corollary}\label{cor:PBR}
Let $G$ be a  group. Suppose $A\seq G$ is a nonempty finite set, and let $d=\VC^\ell_A(A)$ and $k=|A^3|/|A|$. Then there is a coset nilprogression $P$ of rank and step $O(d^2)$, and in $O_d(1)$-normal form, such that $P\seq AA\inv$ and $\cov(A:P)\leq O_d(k^{d+O(1)})$. 
\end{corollary}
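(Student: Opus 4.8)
The plan is to combine Lemma \ref{lem:AFZ} (in case $(1)$) with the Breuillard-Green-Tao structure theorem (Theorem \ref{thm:BGT}) in exactly the manner advertised in the introduction. First, by Remark \ref{rem:d=0}, we may assume $d = \VC^\ell_A(A) \geq 1$, since if $d = 0$ then $A$ is a right coset $Ha$ of a subgroup and the conclusion is witnessed by $P = H$ (a coset nilprogression of complexity $0$), with $\cov(A:H) = 1$. Next, observe that the tupling parameters appearing in Lemma \ref{lem:AFZ}$(1)$, namely $k' = |A^2|/|A|$ and $m = |AA\inv|/|A|$, are both bounded by $k^{O(1)}$: indeed $\sigma[A] \leq \tau[A] = k$ by Fact \ref{fact:triplings}$(a)$, and $\delta[A] \leq \sigma[A]^2 \leq k^2$ by Fact \ref{fact:triplings}$(b)$. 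So the conclusion of Lemma \ref{lem:AFZ} can be stated purely in terms of $k$.

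The main step is to choose the parameters $n$, $u$, and $\epsilon$ fed into Lemma \ref{lem:AFZ} correctly. Set $u = 2$ and take $n = n(k^{O(1)})$ — the Breuillard-Green-Tao bound from Definition \ref{def:BGTn} applied to an approximate group built from the set $B$ we are about to produce — and take $\epsilon$ to be some fixed constant, say $\epsilon = \tfrac{1}{2}$. Lemma \ref{lem:AFZ}$(1)$ then yields a symmetric set $B \seq G$ with $|B^2| \leq 2^{d(d+1)}|B|$, with $B^n \seq \St^\ell_{1/2}(A) \seq AA\inv$ (the last inclusion by Proposition \ref{prop:Stab}$(d)$, since $\tfrac{1}{2} < 2$), and with $|A| \leq m^{1/d}(60kn)^{d+1}|B| \leq O_d(k^{d+O(1)})\,|B|$ after absorbing $n = O_d(1)$ into the constant. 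The bound $|B^2| \leq 2^{d(d+1)}|B|$ gives $|B^3| \leq |B^2|^2/|B| \cdot (\text{const}) $ — more carefully, by Pl\"unnecke-Ruzsa (Proposition \ref{prop:PRI}$(a)$) or directly, $|B^3| \leq O_d(1)|B|$, so $B$ has tripling bounded by some $K = K(d) = 2^{O(d^2)}$. Now apply Theorem \ref{thm:BGT} to $B$: there is $n_0 = n(K) = O_d(1)$ and a coset nilprogression $P$ of rank and step $O(\log 2K) = O(d^2)$, in $O_K(1) = O_d(1)$-normal form, with $P \seq B^{n_0}$ and $|B| \leq O_d(|P|)$.

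The one subtlety is matching the exponent: Theorem \ref{thm:BGT} produces $P \seq B^{n_0}$ where $n_0 = n(K)$ depends on the tripling $K$ of $B$, but $K$ depends only on $d$, so $n_0 = O_d(1)$ is a legitimate constant. We should run Lemma \ref{lem:AFZ} with $n$ chosen to be (at least) this $n_0$ — this requires knowing $n_0$ before invoking Lemma \ref{lem:AFZ}, but since $n_0$ depends only on $d$ (through $K = 2^{d(d+1)+O(1)}$, which is determined as soon as we fix $u = 2$ in Lemma \ref{lem:AFZ}), there is no circularity: fix $u = 2$, compute the resulting doubling bound $2^{d(d+1)}$, deduce the tripling bound $K = K(d)$, set $n := n(K)$, then apply Lemma \ref{lem:AFZ} with these $n, u$ and $\epsilon = \tfrac12$. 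Then $P \seq B^{n_0} \seq B^n \seq AA\inv$. Finally, chaining the two size estimates, $|A| \leq O_d(k^{d+O(1)})|B| \leq O_d(k^{d+O(1)})|P|$, and since $P$ is symmetric (Fact \ref{fact:coverQ}$(a)$), we may pass to $Q$ with $Q^2 \seq P$ and $|P| \leq O_d(|Q|)$ (Fact \ref{fact:coverQ}$(b)$), then apply Ruzsa's Covering Lemma (Lemma \ref{lem:RCL}): $\cov(A : P) \leq \cov(A : Q^2) \leq |AQ|/|Q|$. Bounding $|AQ|$: since $Q \seq P \seq AA\inv$, we get $|AQ| \leq |AAA\inv| \leq \tau[A]^{O(1)}|A| = k^{O(1)}|A|$ by Pl\"unnecke-Ruzsa (Proposition \ref{prop:PRI}$(a)$), and $|Q| \geq \Omega_d(|P|) \geq \Omega_d(|B|) \geq \Omega_d(k^{-(d+O(1))}|A|)$, so $\cov(A:P) \leq O_d(k^{d+O(1)})$ as claimed. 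I expect the main obstacle to be nothing conceptual but rather bookkeeping: verifying that all the implied constants genuinely depend only on $d$ (not on $k$ or $\epsilon$), and in particular that the tripling bound on $B$ — which controls the rank, step, normal-form parameter, and exponent $n_0$ coming out of Theorem \ref{thm:BGT} — is a function of $d$ alone. The logarithmic-in-$K$ rank/step bound in Theorem \ref{thm:BGT} then gives rank and step $O(\log 2K) = O(d^2)$, matching the statement.
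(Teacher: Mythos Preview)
Your overall strategy matches the paper's almost exactly, but there is one genuine gap. You apply Lemma \ref{lem:AFZ} with $u=2$, obtaining only a \emph{doubling} bound $|B^2|\leq 2^{d(d+1)}|B|$, and then assert that ``by Pl\"unnecke--Ruzsa (Proposition \ref{prop:PRI}$(a)$) or directly, $|B^3|\leq O_d(1)|B|$''. This step fails in nonabelian groups. Proposition \ref{prop:PRI}$(a)$ is stated in terms of the \emph{tripling} constant $\tau[B]$, so it cannot be invoked to bound $|B^3|$ starting only from $\sigma[B]$. And in fact bounded doubling does not imply bounded tripling for symmetric sets in arbitrary groups: for instance, in the free product $G=H*(\Z/2\Z)$ with $|H|=n$ and generator $x$ of order $2$, the set $B=H\cup\{x\}$ is symmetric (in the sense of the paper), has $|B^2|/|B|<3$, yet $|B^3|/|B|\geq (n-1)^2/(n+1)\to\infty$. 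So you cannot feed $B$ into Theorem \ref{thm:BGT} on the basis of its doubling alone.

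The paper sidesteps this by taking $u=3$ in Lemma \ref{lem:AFZ}, which directly yields $|B^3|\leq 3^{d(d+1)}|B|$; then Theorem \ref{thm:BGT} applies with tripling constant $3^{d(d+1)}$, giving rank and step $O(\log(3^{d(d+1)}))=O(d^2)$ and $n=n(3^{d(d+1)})=O_d(1)$. With this single change your argument goes through verbatim, including your covering-lemma endgame via Fact \ref{fact:coverQ}$(b)$ and the bound $|AQ|\leq |AAA\inv|\leq k^{O(1)}|A|$, which is exactly what the paper does.
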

\begin{proof}
We may assume $d\geq 1$ (see Remark \ref{rem:d=0}).
 Let $w=d(d+1)$. Recall that $|AA\inv|/|A|\leq k^{O(1)}$ by the Pl\"{u}nnecke-Ruzsa inequalities (Proposition \ref{prop:PRI}). So we can apply case $(1)$ of Lemma \ref{lem:AFZ} with $\epsilon=1/2$, $u=3$, and  $n=n(w)\leq O_d(1)$  from Definition \ref{def:BGTn}. This yields a symmetric set $B\seq G$ such that $|A|\leq O_d(k^{d+O(1)}|B|)$, $|B^3|\leq 3^w|B|$, and $B^n\seq \St^\ell_{\epsilon}(A)\seq AA\inv$. By Theorem \ref{thm:BGT}, there is a coset nilprogression $P$ of rank and step $O(\log(3^w))= O(d^2)$, and in $O_d(1)$-normal form, such that $P\seq B^m$ and $|B|\leq O_d(|P|)$. So  $|A|\leq O_d(k^{d+O(1)}|P|)$. 
 
 Now apply Fact \ref{fact:coverQ}$(b)$ to find a symmetric set $Q$ such that $Q^2\seq P$ and $|P|\leq O_d(|Q|)$. Then, using the Pl\"{u}nnecke-Ruzsa inequalities (Proposition \ref{prop:PRI}), we have
 \[
 |AQ|\leq |AP|\leq |AAA\inv|\leq k^{O(1)}|A|\leq O_d(k^{d+O(1)}|P|)\leq O_d(k^{d+O(1)}|Q|).
 \]
Since $Q^2\seq P$, this yields $\cov(A:P)\leq O_d(k^{d+O(1)})$ by Lemma \ref{lem:RCL}. 
\end{proof}

The previous result yields a polynomial bound in the tripling constant, provided we view the VC-dimension $d$ as fixed. From this perspective, we then obtain qualitative improvements over Theorem \ref{thm:BGT}. In particular, the complexity of $P$ depends \emph{only} on $d$, and we find $P$ inside $AA\inv$ rather than a fourfold product of $A$ and $A\inv$ (e.g., $2A-2A$ in the additive situation). This latter feature is a hallmark property of  NIP sets. 

\begin{remark}
In the previous corollary, a bound on $|A^2|/|A|$ is sufficient to obtain the size estimate $|A|\leq O_d(k^{d+O(1)}|P|)$ via case $(1)$ of Lemma \ref{lem:AFZ} (and Fact \ref{fact:triplings}$(b)$). By arguing with case $(2)$ instead, one could obtain a similar conclusion using $\VC^\ell_{A\inv}(A)$ and $\max\{|AA\inv|/|A|,|AA\inv|/|A|\}$. Along the same lines, if one were to distinguish $k=|A^3|/|A|$ from $c=|A^2|/|A|$, then the proof of Corollary \ref{cor:PBR} yields $\cov(A:P)\leq O_d(c^{d}k^{O(1)})$. 
\end{remark}

\subsection{NIP sets of bounded tripling}

In this section, we prove an effective version of \cite[Theorem 2.1]{CPfrNIP}, which is a result of of  the first author and Pillay \cite{CPfrNIP} on finite NIP sets of bounded tripling in arbitrary (possibly infinite) groups. 
In analogy to the discussion of \cite{CPTNIP} in Remark \ref{rem:NIPAR}$(2,3)$, our proof does not yield certain definability aspects of \cite[Theorem 2.1]{CPfrNIP} nor ``functional control" on the error. On the other hand, the bounds in \cite[Theorem 2.1]{CPfrNIP} are ineffective, whereas here we obtain bounds that are polynomial in the tripling and error parameter. Moreover, our result allows for some variation in the VC-dimension and tripling parameters. In particular (using the notation of Definition \ref{def:TP}),  one can choose either $\VC^\ell_A(A)$, $\VC^r_A(A)$ and $\tau[A]$, or $\VC^\ell_{A\inv}(A)$, $\VC^r_{A\inv}(A)\}$, and $\delta'[A]\coloneqq \max\{\delta[A],\delta[A\inv]\}$. By Remark \ref{rem:VCvariants}, $\VC^\bullet_A(A)$ can be bounded above uniformly in terms of $\VC^{\circ}_{A\inv}(A)$, but we do not know whether the converse holds. Likewise, by Fact \ref{fact:triplings}, $\delta'[A]$ is bounded above uniformly in terms of $\tau[A]$, but the converse is not true and, in some sense, bounding $\delta'[A]$ is the weakest reasonable assumption. On the other hand, there is no uniform  bound between $\delta[A]$ and $\delta[A\inv]$  (see \cite[Example 4.4]{TaoPSE}), hence we must explicitly bound both.

\begin{theorem}\label{thm:trip}
Let $G$ be a group and fix a  nonempty finite set $A\seq G$. Let $d_\ell$, $d_r$, and $k$ be defined as either:
\begin{enumerate}
\item $d_\ell=\VC^\ell_A(A)$, $d_r=\VC^r_A(A)$, and $k=|A^3|/|A|$, or
\item $d_\ell=\VC^\ell_{A\inv}(A)$, $d_r=\VC^r_{A\inv}(A)$, and $k=\max\{|AA\inv|,|A\inv A|\}/|A|$.
\end{enumerate}
Then for any $\epsilon\in (0,1)$, there is a coset nilprogression $P$ of rank and step $O(d_r^2)$, and in $O_{d_r}(1)$-normal form, and an integer $N$ satisfying
\[
N\leq O_{d_r}(\exp(O(d_\ell d_r))(k/\epsilon)^{O(d_\ell d_r)}),
\]
such that the following properties hold.
\begin{enumerate}[$(i)$]
\item $P\seq \St^r_{\epsilon}(A)$ and $\cov(A:P)\leq N$.
\item $|A\smd FP|<\epsilon|A|$ for some $F\seq A$ with $|F|\leq N$. 
\item  $|Z^\ell_\epsilon(A,P)|\leq \epsilon|A|$.
\end{enumerate}
\end{theorem}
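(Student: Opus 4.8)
The plan is to mirror the structure of the proof of Theorem \ref{thm:NIPAR}, but replace the noncommutative Bogolyubov's Lemma (Theorem \ref{thm:Bogo}) with the Breuillard-Green-Tao structure theorem (Theorem \ref{thm:BGT}), and insert the generalized Alon-Fox-Zhao trick (Lemma \ref{lem:AFZ}) to keep the bounds polynomial. As in Remark \ref{rem:d=0}, we may assume $d_\ell, d_r \geq 1$, since otherwise Proposition \ref{prop:d=0} lets us take $A$ itself to be (essentially) a coset of a subgroup and the conclusion is trivial. We also note that by the Pl\"unnecke-Ruzsa inequalities (Proposition \ref{prop:PRI}) together with Fact \ref{fact:triplings}, in case $(1)$ we control $|A^2|/|A|$, $|AA\inv|/|A|$, $|A\inv A|/|A|$, and the relevant fourfold products all by $k^{O(1)}$; in case $(2)$ the given $k$ already directly bounds $|AA\inv|/|A|$ and $|A\inv A|/|A|$, and one checks $|A^2|/|A| \leq k^{O(1)}$ is \emph{not} available but is not needed since Lemma \ref{lem:AFZ} case $(2)$ only uses $|A\inv A|/|A|$ and $|AA\inv|/|A|$.

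First I would apply Lemma \ref{lem:AFZ} (case $(1)$ or $(2)$ matching the hypothesis) with $u = 3$, with $n = n(d_\ell(d_\ell+1)) \leq O_{d_\ell}(1)$ as in Definition \ref{def:BGTn}, and with the error parameter $\epsilon' = \epsilon\nu/9$, where $\nu = \min\{|X|/|A|, \epsilon\}$ and $X = \St^\ell_{(\epsilon')^2/162}(A)$ — here one has to set things up carefully, exactly as in the proof of Theorem \ref{thm:NIPAR}, so that Lemma \ref{lem:structure} and Lemma \ref{lem:regularity} will both apply at the end. Using Corollary \ref{cor:HPL} to bound $\nu\inv \leq O_{d_\ell, \alpha, \ldots}$ — wait, there is no density parameter $\alpha$ here, so instead one bounds $\cov(A : X) \leq (30 k^{O(1)}/\epsilon')^{d_\ell}$ directly via Corollary \ref{cor:HPL}, giving a polynomial-in-$k/\epsilon$ bound on $|A|/|X|$. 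Lemma \ref{lem:AFZ} then produces a symmetric set $B$ with $|B^3| \leq 3^{d_\ell(d_\ell+1)}|B|$, with $B^n \subseteq \St^\ell_{\epsilon'}(A) \subseteq AA\inv$, and with $|A| \leq O(k^{d_\ell + O(1)}|B|) \cdot (\text{stuff polynomial in } 1/\epsilon)$. Feeding $|B^3|/|B| \leq 3^{d_\ell(d_\ell+1)} =: K$ into Theorem \ref{thm:BGT} gives a coset nilprogression $P$ of rank and step $O(\log 2K) = O(d_\ell^2)$ — here I should double-check whether the statement wants $d_r^2$; the natural output is governed by whichever $\VC$ dimension drives the doubling, so one may need case $(2)$ or a symmetric-side argument to get $d_r$, or simply use $d = \max\{d_\ell, d_r\}$; I would match the paper's claim of $O(d_r^2)$ by applying the right-handed analogue of Lemma \ref{lem:AFZ} (via Proposition \ref{prop:VCinv} applied to $A\inv$) — with $P \subseteq B^{n(K)} \subseteq \St^r_{\ldots}(A)$ (using the right-translate stabilizers, after swapping $\ell \leftrightarrow r$ throughout via Proposition \ref{prop:Stab}(c) and Remark \ref{rem:regularity}) and $|B| \leq O_{d_r}(|P|)$.

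With $P$ in hand contained in $S := \St^r_{\epsilon\nu/9}(A)$: since $\epsilon\nu/9 \leq \epsilon$ we get $P \subseteq \St^r_\epsilon(A)$, and since $\epsilon\nu/9 \leq (\epsilon')^2/2$ — wait, one needs $\epsilon' \leq$ something; I'd choose constants so that Lemma \ref{lem:regularity}'s hypothesis $X \subseteq \St^r_{\epsilon^2/2}(A)$ (in the swapped form) holds, giving property $(iii)$, $|Z^\ell_\epsilon(A,P)| \leq \epsilon|A|$. For property $(ii)$: apply Lemma \ref{lem:structure} (in the form matching the stabilizer sides in play) to get $A' \subseteq A$ with $A' \subseteq D \subseteq A'S \Rightarrow |A \smd D| < \epsilon|A|$; use Fact \ref{fact:coverQ}(b) to find symmetric $Q$ with $Q^2 \subseteq P$ and $|P| \leq O_{d_r}(|Q|)$; then $|AQ| \leq |AP| \leq |A \cdot AA\inv| \leq k^{O(1)}|A| \leq N\cdot|Q|$ by Pl\"unnecke-Ruzsa, so Ruzsa's Covering Lemma (Lemma \ref{lem:RCL}) gives $F \subseteq A'$, $|F| \leq N$, with $A' \subseteq FQ^2 \subseteq FP \subseteq A'S$, yielding $(ii)$. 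The same chain $|AQ| \leq N|Q|$ and $Q^2 \subseteq P$ gives $\cov(A:P) \leq N$ via Lemma \ref{lem:RCL}, which with $P \subseteq \St^r_\epsilon(A)$ gives $(i)$. Throughout, $N$ absorbs the product of the $O(k^{d_\ell+O(1)})$ from Lemma \ref{lem:AFZ}, the $\nu\inv \leq (k^{O(1)}/\epsilon)^{O(d_\ell)}$ from Corollary \ref{cor:HPL}, the $O_{d_r}(1)$ normal-form constants from Theorem \ref{thm:BGT}, and the $k^{O(1)}$ covering factors — multiplying out to $O_{d_r}(\exp(O(d_\ell d_r))(k/\epsilon)^{O(d_\ell d_r)})$, matching the stated bound.

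The main obstacle I expect is bookkeeping the constants and error parameters so that all three of Corollary \ref{cor:HPL}, Lemma \ref{lem:AFZ}, Lemma \ref{lem:structure}, and Lemma \ref{lem:regularity} apply simultaneously with compatible stabilizer thresholds — in particular, Lemma \ref{lem:AFZ} is one-sided (produces $B^n \subseteq \St^\ell_\epsilon$), while Lemma \ref{lem:structure} needs a \emph{left} stabilizer $X$ of small radius \emph{and} a right stabilizer $S$ containing $P$, and Theorem \ref{thm:BGT} is driven by the doubling of $B$; reconciling these so that the final complexity comes out as $O(d_r^2)$ (rather than $O(d_\ell^2)$ or $O(\max^2)$) is the delicate part and is presumably why the paper introduces both cases $(1)$ and $(2)$ and both $d_\ell$ and $d_r$. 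A secondary subtlety, flagged already in the paper's preamble to the theorem, is that in case $(2)$ one only gets to assume $\max\{|AA\inv|,|A\inv A|\}/|A| \leq k$, so every invocation of Pl\"unnecke-Ruzsa must be routed through $\delta[A]$ and $\delta[A\inv]$ rather than $\sigma[A]$ — and one must verify that Lemma \ref{lem:AFZ} case $(2)$, Theorem \ref{thm:BGT} (which needs $|S^3|/|S|$, obtained from $|B^3|/|B|$, fine), and the covering estimates only ever need exactly those quantities.
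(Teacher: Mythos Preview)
Your proposal follows the paper's approach closely and identifies all the right ingredients (Lemma~\ref{lem:AFZ}, Theorem~\ref{thm:BGT}, Fact~\ref{fact:coverQ}, Lemmas~\ref{lem:structure} and~\ref{lem:regularity}, Ruzsa covering). Two points need tightening, one of which is a genuine gap.

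\textbf{Minor cleanup.} Your definition of $X$ is circular: you set $\epsilon' = \epsilon\nu/9$ with $\nu$ depending on $|X|$, but then define $X = \St^\ell_{(\epsilon')^2/162}(A)$. The paper simply takes $X = \St^\ell_{\epsilon^2/162}(A)$ (threshold in terms of the \emph{original} $\epsilon$, as required by Lemma~\ref{lem:structure}), and then $\nu = (\epsilon^2/4680k)^{d_\ell}$ is a concrete lower bound on $|X|/|A|$ via Corollary~\ref{cor:HPL}$(a)$. Also, the $d_r$ (rather than $d_\ell$) dependence in the rank and step of $P$, which you correctly flag as the delicate point, is handled cleanly in the paper by applying Lemma~\ref{lem:AFZ} to $A\inv$ from the outset: since $S = \St^r_\delta(A) = \St^\ell_\delta(A\inv)$ by Proposition~\ref{prop:Stab}$(c)$, and $\VC^\ell_{A\inv}(A\inv) = \VC^r_A(A) = d_r$ (resp.\ $\VC^\ell_A(A\inv) = \VC^r_{A\inv}(A) = d_r$) by Proposition~\ref{prop:VCinv}, Lemma~\ref{lem:AFZ} applied to $A\inv$ with $u=3$ and $n = n(d_r(d_r+1))$ directly gives $|B^3| \leq 3^{d_r(d_r+1)}|B|$ and $B^n \seq S$. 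You arrive at this, but only after a detour.

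\textbf{Genuine gap in case $(2)$.} Your covering estimate $|AP| \leq |AA\inv A| \leq k^{O(1)}|A|$ ``by Pl\"unnecke--Ruzsa'' is valid in case~$(1)$ (where $k = \tau[A]$), but \emph{fails} in case~$(2)$: there $k$ only bounds $\delta[A]$ and $\delta[A\inv]$, and as the paper notes just before the theorem, $\tau[A]$ (hence $\alpha[A] = |AA\inv A|/|A|$) is \emph{not} controlled by these. You flag that Pl\"unnecke--Ruzsa must be ``routed through $\delta[A]$ and $\delta[A\inv]$'', but there is no such route. The paper's fix is to avoid Pl\"unnecke--Ruzsa entirely in case~$(2)$ and instead use the VC hypothesis a second time: Corollary~\ref{cor:HPL}$(b)$ gives $\cov(A:S) \leq (30k/\delta)^{d_r}$, so $A \seq ES$ for some $|E| \leq (30k/\delta)^{d_r}$, and then $AP \seq ES^2 \seq E(A\inv A)$ yields $|AP| \leq (30k/\delta)^{d_r} k |A|$. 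This is precisely why the final exponent is $O(d_\ell d_r)$ rather than $O(d_\ell + d_r)$.
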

\begin{proof}
We may assume $d_r\geq 1$ (see Remark \ref{rem:d=0}).
Let $X=\St^\ell_{\epsilon^2/162}(A)$ and set $\nu=(\epsilon^2/4680k)^{d_\ell}$. By Corollary \ref{cor:HPL}$(a)$, we have $|X|\geq\nu|A|$. Set $w=d_r(d_r+1)$, $\delta=\epsilon\nu/9$,  and $S=\St^r_{\delta}(A)$. We want to use $S$ in the context of Lemma \ref{lem:AFZ}. To justify this, we make the following easy observations, which follow from Proposition \ref{prop:Stab}$(c)$, Proposition \ref{prop:VCinv}, Fact \ref{fact:triplings}, and the fact that inversion preserves cardinality:
\begin{enumerate}[\hspace{5pt}$\ast$]
\item $S=\St^\ell_{\delta}(A\inv)$.
\item In case (1), $d_r=\VC^\ell_{A\inv}(A\inv)$, $|A^{\nv 2}|/|A\inv|\leq k$, and $|A\inv A|/A\inv|\leq k^2$.
\item In case (2), $d_r=\VC^\ell_{A\inv}(A)$ and $k=\max\{|A\inv A|,|AA\inv|\}/|A\inv|$.
\end{enumerate}
Thus we can apply Lemma \ref{lem:AFZ} to $A\inv$ with $u=3$,  $\delta$ in place of $\epsilon$, and $n=n(w)\leq O_{d_r}(1)$  from Definition \ref{def:BGTn}. This yields a symmetric set $B\seq G$ such that $|B^3|\leq 3^w|B|$, $B^n\seq S$, and $|A|\leq k^{O(1)}(30kn/\delta)^{d_r+1}|B|$. 
So $|A|\leq O_{d_r}((k/\delta)^{O(d_r)})|B|$.\footnote{We abandon $d_r+O(1)$ and work instead with $O(d_r)$ due to later steps in the proof.}

By Theorem \ref{thm:BGT}$(b)$, there is a coset nilprogression $P$ of rank and step $O(d_r^2)$, and in $O_{d_r}(1)$-normal form, such that $P\seq B^n\seq S$ and $|B|\leq O_{d_r}(|P|)$.  We will show that $P$ satisfies the desired properties. 

First, one can compute that $(k/\delta)^{O(d_r)}\leq O_{d_r}(\exp(O(d_\ell d_r))(k/\epsilon)^{O(d_\ell d_r)})$, and thus the bounds obtained below are of the desired form.

For condition $(i)$, first note that $P\seq S\seq  \St^r_\epsilon(A)$.  Using Fact \ref{fact:coverQ}$(b)$, fix a symmetric set $Q$ such that $Q^2\seq P$ and $|P|\leq O_{d_r}(|Q|)$. \medskip

\noindent\textit{Claim.} $|AQ|\leq O_{d_r}((k/\delta)^{O(d_r)}|Q|)$.

\noindent\textit{Proof.} In case $(1)$, we are argue as in the proof of Corollary \ref{cor:PBR}:
\begin{multline*}
|AQ|\leq |AP\leq |AA\inv A|\leq k^{O(1)}|A|\leq O_{d_r}((k/\delta)^{O(d_r)}|B|)\\
\leq O_{d_r}((k/\delta)^{d_r+O(1)}|P|)\leq O_{d_r}((k/\delta)^{O(d_r)}|Q|).
\end{multline*}
So assume  we are in case $(2)$. Then  Corollary \ref{cor:HPL}$(b)$  yields $\cov(A:S)\leq (30k/\delta)^{d_r}$. So we may fix  $E\seq A$ such that $|E|\leq (30k/\delta)^{d_r}$ and $A\seq ES$. Then $AP\seq ES^2$. Since $S^2\seq A\inv A$, we then get
\[
|AP|\leq |ES^2|\leq (30k/\delta)^{d_r}|A\inv A|\leq (30k/\delta)^{d_r}k|A|\leq O_{d_r}((k/\delta)^{O(d_r)}(|B|),
\]
which then yields the claim as in case $(1)$. \clqed\medskip

By the claim and Lemma \ref{lem:RCL}, we have $\cov(A:Q^2)\leq O_{d_r}((k/\delta)^{O(d_r)})$. So $\cov(A:P)\leq O_{d_r}((k/\delta)^{O(d_r)})$.  
This finishes condition $(i)$. 

For condition $(ii)$, recall that $S=\St^r_{\epsilon\nu/9}(A)$.  By Lemma \ref{lem:structure} and choice of $\nu$, there is a set $A'\seq A$ such that for any $D\seq G$, if $A'\seq D\seq A'S$ then $|A\smd D|<\epsilon|A|$. W can now apply Lemma \ref{lem:RCL} again (via the claim) to find $F\seq A'$ such that $|F|\leq O_{d_r}((k/\delta)^{O(d_r)})$ and $A'\seq FP$.  So $FP\seq A'S$, and hence $|A\smd FP|<\epsilon|A|$.

Finally, since $P\seq S\seq \St^r_{\epsilon^2/2}(A)$, Lemma \ref{lem:regularity} yields condition $(iii)$. 
\end{proof}

\begin{remark}
Although case $(1)$  bounds $\tau[A]$, the proof only explicitly uses bounds on $\sigma[A]$ and $\alpha[A]$. But this yields a bound on $\tau[A]$ by Fact \ref{fact:triplings}.
\end{remark}

For the sake of completeness, we note that Theorem \ref{thm:trip} yields a version of Theorem \ref{thm:NIPAR}, with (mostly) effective bounds, but in terms of a coset nilprogression rather than a Bohr neighborhood.

\begin{corollary}\label{cor:CPTeff}
Let $G$ be a finite group. Fix a nonempty set $A\seq G$, and let $d_\ell=\VC^\ell_A(A)$, $d_r=\VC^r_A(A)$, and $\alpha=|A|/|G|$. Then for any $\epsilon\in (0,1)$, there is a coset nilprogression $P$ of rank and step $O(d_r^2)$, and in $O_{d_r}(1)$-normal form, and an integer $N$ satisfying
\[
N\leq O_{d_r}(\exp(O(d_\ell d_r))(\alpha\epsilon)^{\nv O(d_\ell d_r)})
\]
such that the following properties hold.
\begin{enumerate}[$(i)$]
\item $P\seq \St^r_{\epsilon}(A)$, $\cov(A:P)\leq N$, and $\cov(G:P)\leq N$.
\item $|A\smd FP|<\epsilon|A|$ for some $F\seq A$ with $|F|\leq N$. 
\item  $|Z^\ell_\epsilon(A,P)|\leq \epsilon|A|$.
\end{enumerate}
Moreover, $\cov(G:P)\leq N$.
\end{corollary}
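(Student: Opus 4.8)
The plan is to deduce this almost immediately from Theorem \ref{thm:trip}, using the trivial bound on the tripling parameter available inside a finite group, and then to supply the one extra ingredient (covering all of $G$ rather than just $A$) via Ruzsa's Covering Lemma.

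First I would note that since $A\seq G$ with $G$ finite, $|A^3|\leq|G|$, so the tripling constant $k\coloneqq|A^3|/|A|$ appearing in case $(1)$ of Theorem \ref{thm:trip} satisfies $k\leq\alpha\inv$. Applying case $(1)$ of Theorem \ref{thm:trip} to $A$ with $d_\ell=\VC^\ell_A(A)$, $d_r=\VC^r_A(A)$, and the given $\epsilon$, we obtain a coset nilprogression $P$ of rank and step $O(d_r^2)$, in $O_{d_r}(1)$-normal form, and an integer $N_0\leq O_{d_r}(\exp(O(d_\ell d_r))(k/\epsilon)^{O(d_\ell d_r)})$ for which conditions $(i)$--$(iii)$ of Theorem \ref{thm:trip} hold. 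Substituting $k\leq\alpha\inv$ shows $N_0\leq O_{d_r}(\exp(O(d_\ell d_r))(\alpha\epsilon)^{\nv O(d_\ell d_r)})$, which is of the required form. Conditions $(ii)$ and $(iii)$ of the corollary, together with the assertions $P\seq\St^r_\epsilon(A)$ and $\cov(A:P)\leq N_0$ in $(i)$, are then immediate.

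It remains to bound $\cov(G:P)$. Here I would apply Fact \ref{fact:coverQ}$(b)$ to fix a finite symmetric set $Q$ with $Q^2\seq P$ and $|P|\leq O_{d_r}(|Q|)$. Since $\cov(A:P)\leq N_0$ gives $|A|\leq N_0|P|$, we get $|A|/|Q|\leq O_{d_r}(N_0)$, and hence
\[
\frac{|G|}{|Q|}=\alpha\inv\cdot\frac{|A|}{|Q|}\leq O_{d_r}(\alpha\inv N_0).
\]
Applying Ruzsa's Covering Lemma (Lemma \ref{lem:RCL}) with ambient set $G$ and symmetric set $Q$ yields $\cov(G:Q^2)\leq|GQ|/|Q|=|G|/|Q|$, and since $Q^2\seq P$ this forces $\cov(G:P)\leq\cov(G:Q^2)\leq O_{d_r}(\alpha\inv N_0)$. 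Because $\alpha,\epsilon\in(0,1)$ and $d_\ell d_r\geq 1$, the factor $\alpha\inv$ is absorbed into the exponent, so this bound is again $O_{d_r}(\exp(O(d_\ell d_r))(\alpha\epsilon)^{\nv O(d_\ell d_r)})$. Taking $N$ to be the maximum of $N_0$ and this last bound gives an integer of the stated form satisfying all of $(i)$--$(iii)$.

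The argument poses essentially no obstacle beyond what is already packaged in Theorem \ref{thm:trip}; the only mild point worth flagging is that one cannot obtain $\cov(G:P)\leq N$ by a naive covering argument inside $G$, since under the hypotheses of this corollary we only control $\VC^r_A(A)$ and not $\VC^r_G(A)$ (recall Remark \ref{rem:VCvariants}), so passing to the pair $(Q,Q^2)$ and invoking Ruzsa's Covering Lemma as above is the natural way to close the gap.
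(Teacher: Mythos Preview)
Your proof is correct and follows essentially the same approach as the paper: apply case $(1)$ of Theorem~\ref{thm:trip} with the trivial bound $k\leq\alpha\inv$, then obtain $\cov(G:P)$ by passing to a symmetric $Q$ with $Q^2\seq P$ and applying Ruzsa's Covering Lemma. The only cosmetic difference is that the paper refers back to the $Q$ already constructed inside the proof of Theorem~\ref{thm:trip}, whereas you re-derive such a $Q$ externally via Fact~\ref{fact:coverQ}$(b)$; both routes yield the same bound.
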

\begin{proof}
Other than $\cov(G:P)\leq N$, this follows from  case (1) of Theorem \ref{thm:trip} in light of the trivial  bound $|A^3|/|A|\leq |G|/|A|=\alpha\inv$. To also obtain $\cov(G:P)\leq N$, note that in the proof we can use $|G|\leq\alpha\inv|A|$  to obtain a suitable bound on $|G|/|Q|$, and then apply Lemma \ref{lem:RCL}.
\end{proof}

\begin{remark}
Using the same ideas as in Remark \ref{rem:Sint}, one could obtain stronger forms of Corollary \ref{cor:PBR} and Theorem \ref{thm:trip} with $P\seq \St^\ell_\epsilon(A)\cap\St^r_\epsilon(A)$, but with slightly worse bounds. For Theorem \ref{thm:trip}, this argument only works in case $(1)$. In particular, using the same notation as in  Remark \ref{rem:Sint}, we need an upper bound on $|UV|/|A|$. But the most we know is $UV\seq A\inv A^2 A\inv$ (via Proposition \ref{prop:Stab}$(d)$),  and a bound on $|A\inv A^2 A\inv|/|A|$ leads to a bound on $\tau[A]$. More precisely, by an exercise similar to Fact \ref{fact:triplings}$(c)$, one can show that if $\beta[A]\coloneqq |A\inv A^2|/|A|$ then $\tau[A]\leq\beta[A]\sigma[A]\leq\beta[A]^2$.
\end{remark}

\subsection{The abelian case}\label{sec:abelian}

The leading $O_d$ constants in Corollary \ref{cor:PBR} and Theorem \ref{thm:trip} are ineffective due to the application of Breuillard-Green-Tao. For abelian groups however, we can obtain fully explicit bounds by instead using the Bogolyubov-Ruzsa Lemma. This change also allows us to replace coset nilprogressions by (commutative) coset progressions. Moreover,  by Fact \ref{fact:triplings}$(e)$, we can focus solely on bounded doubling. 

Before getting to these arguments, we first remark briefly that these results  overlap with Sisask's \cite{SisNIP} work in the abelian case (a detailed discussion can be found at the end of this subsection). We also set some notation. Given an abelian group $G$ and finite sets $A,X\seq G$, we write $\St_\epsilon(A)$ for  $\St^\ell_\epsilon(A)=\St^r_\epsilon(A)$ and $Z_\epsilon(A,X)$ for $Z^\ell_\epsilon(A,X)=Z^r_\epsilon(A,X)$. Similarly, given $A,B\seq G$, we write  $\VC_B(A)$ for $\VC^\ell_B(A)=\VC^r_B(A)$. 

We start by specializing Corollary \ref{cor:PBR} to abelian groups. 

\begin{corollary}\label{cor:PBRa}
Let $G$ be an abelian group and fix a nonempty finite set $A\seq G$. Let $d=\VC_A(A)$ and $k=|2A|/|A|$. Then there is a proper coset progression $P$ of rank $O(d^{12})$ such that $P\seq A-A$ and $\cov(A:P)\leq \exp(O(d^{14}))k^{d+O(1)}$. 
\end{corollary}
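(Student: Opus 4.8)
The plan is to mimic the proof of Corollary \ref{cor:PBR}, but to replace the application of Breuillard--Green--Tao (Theorem \ref{thm:BGT}) with the Bogolyubov--Ruzsa Lemma (Theorem \ref{thm:BRL}), which is available because $G$ is abelian and yields fully explicit quasi-polynomial bounds together with a (commutative) coset progression. First I would dispose of the trivial case $d=0$ via Remark \ref{rem:d=0} and Proposition \ref{prop:d=0}, so assume $d\geq 1$. Since $G$ is abelian, Fact \ref{fact:triplings}$(e)$ lets me convert the hypothesis on $k=|2A|/|A|$ into bounds on all the other tupling parameters; in particular $|A-A|/|A|\leq k^2$ and, more relevantly, any bounded product of copies of $A$ and $-A$ has size at most $k^{O(1)}|A|$ by Pl\"unnecke--Ruzsa (Proposition \ref{prop:PRI}$(b)$).

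Next I would apply Lemma \ref{lem:AFZ}. In the abelian setting cases $(1)$ and $(2)$ of that lemma coincide (modulo replacing $k$ by $k^{O(1)}$ via Fact \ref{fact:triplings}$(e)$), so with $d=\VC_A(A)$, $w=d(d+1)$, $\epsilon=1/2$, $u=2$, and $n=2$ (note the Bogolyubov--Ruzsa conclusion $P\seq 2S-2S$ involves only a fourfold sumset, so here $u=n=2$ suffices, in contrast to the nilprogression case where $n=n(w)$ is ineffective) I obtain a symmetric set $B\seq G$ with $|2B|\leq 2^{w}|B|$, $2B\seq \St_{1/2}(A)\seq A-A$, and $|A|\leq k^{O(1)}(60k)^{d+1}|B|\leq k^{d+O(1)}|B|$. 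Then I feed $S=B$ into Theorem \ref{thm:BRL} with doubling constant $2^{w}$: this produces a proper coset progression $P$ of rank $O((\log 2^{w+1})^6)=O(w^6)=O(d^{12})$ with $P\seq 2B-2B$ and $|B|\leq \exp(O((\log 2^{w+1})^7)|P|)=\exp(O(w^7))|P|=\exp(O(d^{14}))|P|$. Since $2B-2B\subseteq (A-A)+(A-A)=2A-2A$, and one would also want $P\subseteq A-A$; here I should instead apply Bogolyubov--Ruzsa with $S=B$ but note $2S-2S=2B-2B\seq (2B)-(2B)$ where $2B\seq A-A$, so $P\seq (A-A)-(A-A)=2A-2A$, which is \emph{not} quite $A-A$. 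To fix this I would instead invoke Lemma \ref{lem:AFZ} so that $B^{n}\seq\St_{1/2}(A)$ already with $n=4$ (i.e.\ take $u=2$, $n=4$), giving $4B\seq A-A$, hence $2B-2B\seq 4B\seq A-A$ since $B$ is symmetric; the cost is only a constant-factor change in the exponents, still $O(d^{12})$ and $\exp(O(d^{14}))$.

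Finally I would combine the size estimates into a covering bound, exactly as at the end of the proof of Corollary \ref{cor:PBR}: use Fact \ref{fact:coverQa} to extract a coset progression $Q$ of rank $O(d^{12})$ with $2Q\seq P$ and $|P|\leq 4^{O(d^{12})}|Q|$; then estimate $|A+Q|\leq|A+P|\leq|2A-2A|\leq k^{O(1)}|A|\leq k^{d+O(1)}\cdot\exp(O(d^{14}))|Q|$ using Pl\"unnecke--Ruzsa and the chain $|A|\leq k^{d+O(1)}|B|\leq k^{d+O(1)}\exp(O(d^{14}))|P|\leq k^{d+O(1)}\exp(O(d^{14}))|Q|$; and apply Ruzsa's Covering Lemma (Lemma \ref{lem:RCL}) with the symmetric set $Q$ to conclude $\cov(A:P)\leq\cov(A:Q^2)\leq|A+Q|/|Q|\leq\exp(O(d^{14}))k^{d+O(1)}$. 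The main obstacle I anticipate is purely bookkeeping: making sure that the exponent $O(d^{12})$ on the rank and $O(d^{14})$ inside the exponential are genuinely absorbed correctly through the two applications of Pl\"unnecke--Ruzsa, the passage $B\mapsto P\mapsto Q$, and the $4^{r}$ loss in Fact \ref{fact:coverQa} (whose rank $r=O(d^{12})$ contributes $\exp(O(d^{12}))\leq\exp(O(d^{14}))$), and that the fourfold-sumset subtlety above is handled so that $P$ lands inside $A-A$ rather than $2A-2A$. None of this is deep, but it requires care to state the bounds in the clean form claimed.
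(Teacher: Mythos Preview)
Your proposal is correct and matches the paper's proof essentially line for line: apply Lemma \ref{lem:AFZ} with $u=2$, $n=4$, $\epsilon=1/2$ to get a symmetric $B$ with $|2B|\leq 2^{d(d+1)}|B|$ and $4B\seq A-A$, feed $B$ into Theorem \ref{thm:BRL} to obtain $P\seq 2B-2B\seq 4B\seq A-A$ of rank $O(d^{12})$ with $|B|\leq\exp(O(d^{14}))|P|$, and then finish the covering bound via Fact \ref{fact:coverQa}, Pl\"unnecke--Ruzsa, and Lemma \ref{lem:RCL}. Your initial attempt with $n=2$ and subsequent self-correction to $n=4$ lands exactly on the paper's choice; the only (harmless) slip is writing $|A+P|\leq|2A-2A|$ rather than $|2A-A|$, but both are $k^{O(1)}|A|$.
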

\begin{proof}
We follow the proof of Corollary \ref{cor:PBR}, but using  $n=4$ and $u=2$ in the application of Lemma \ref{lem:AFZ}. This yields a symmetric set $B\seq G$ such that $|A|\leq O(1)^{d}k^{d+O(1)}|B|$, $|2B|\leq 2^{d(d+1)}|B|$, and $4B\seq A-A$. Now apply Theorem \ref{thm:BRL} to $B$  obtain a proper coset progression $P$ of rank $O(d^{12})$ such that $P\seq 4B$ and $|B|\leq \exp(O(d^{14}))|P|$. This yields $|A|\leq \exp(O(d^{14}))k^{d+O(1)}|P|$. So $\cov(A:P)\leq \exp(O(d^{14}))k^{d+O(1)}$ by a similar argument involving Fact \ref{fact:coverQa}, Lemma \ref{lem:RCL}, and Proposition \ref{prop:PRI}$(b)$. 
\end{proof}

Next we prove the commutative analogue of Theorem \ref{thm:trip}.  In light of Fact \ref{fact:triplings}$(e)$, we will only state the version corresponding to case $(1)$. 

\begin{theorem}\label{thm:tripa}
Let $G$ be an abelian group and fix a  nonempty finite set $A\seq G$. Let $d=\VC_A(A)$ and $k=|2A|/|A|$. Then for any $\epsilon\in (0,1)$, there is a proper coset progression $P$ of rank $O(d^{12})$ and an integer $N$ satisfying
\[
N\leq \exp(O(d^{14}))(k/\epsilon^2)^{d^2+O(d)},
\]
such that the following properties hold.
\begin{enumerate}[$(i)$]
\item $P\seq \St_{\epsilon}(A)$ and $\cov(A:P)\leq N$.
\item $|A\smd (F+P)|<\epsilon|A|$ for some $F\seq A$ with $|F|\leq N$. 
\item  $|Z_\epsilon(A,P)|\leq \epsilon|A|$.
\end{enumerate}
\end{theorem}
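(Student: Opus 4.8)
The plan is to mimic the proof of Theorem \ref{thm:trip} in case $(1)$, but to replace every appeal to Breuillard-Green-Tao (Theorem \ref{thm:BGT}) with the Bogolyubov-Ruzsa Lemma (Theorem \ref{thm:BRL}), and every appeal to Fact \ref{fact:coverQ}$(b)$ with its effective commutative counterpart Fact \ref{fact:coverQa}. Since $G$ is abelian, left and right stabilizers coincide, so Lemma \ref{lem:AFZ} may be applied directly to $A$ (rather than passing to $A\inv$), and by Fact \ref{fact:triplings}$(e)$ a bound on $k=\sigma[A]=|2A|/|A|$ controls all other tupling parameters; in particular $\delta[A]=|A-A|/|A|\leq k^{O(1)}$ and $\alpha[A]=|A-A+A|/|A|\leq k^{O(1)}$, so case $(1)$ of Lemma \ref{lem:AFZ} applies with $m=|A-A|/|A|\leq k^{O(1)}$. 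As in the introduction to Section \ref{sec:abelian}, we write $\St_\epsilon$, $Z_\epsilon$, $\VC_B$ without superscripts.

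First I would set $X=\St_{\epsilon^2/162}(A)$ and, exactly as in Theorem \ref{thm:trip}, choose $\nu=(\epsilon^2/4680k)^{d}$ so that $|X|\geq\nu|A|$ by Corollary \ref{cor:HPL}, and set $\delta=\epsilon\nu/9$ and $S=\St_\delta(A)$. Then I would apply Lemma \ref{lem:AFZ} (case $(1)$) to $A$ with $u=2$, $n=4$, and $\delta$ in place of $\epsilon$: this produces a symmetric $B\subseteq G$ with $|2B|\leq 2^{d(d+1)}|B|$, $4B\subseteq S$, and $|A|\leq m^{1/d}(30\cdot 4k/\delta)^{d+1}|B|\leq k^{O(1)}(30k/\delta)^{d}|B|$, which (plugging in $\delta=\epsilon\nu/9$ and $\nu=(\epsilon^2/4680k)^{d}$) gives $|A|\leq\exp(O(d^{2}))(k/\epsilon^{2})^{d^{2}+O(d)}|B|$. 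Next apply Theorem \ref{thm:BRL} to $B$ (whose doubling is at most $2^{d(d+1)}$): this gives a proper coset progression $P$ of rank $O((\log(2\cdot 2^{d(d+1)}))^{6})=O(d^{12})$ with $P\subseteq 2B-2B=4B\subseteq S$ (using that $G$ is abelian and $B$ symmetric, so $2B-2B=4B$) and $|B|\leq\exp(O((\log 2^{d(d+1)})^{7})|P|=\exp(O(d^{14}))|P|$. Combining, $|A|\leq\exp(O(d^{14}))(k/\epsilon^{2})^{d^{2}+O(d)}|P|$, which is the required bound on $N$ since rank $O(d^{12})$ is absorbed.

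Then I would verify the three conclusions. For $(i)$: $P\subseteq S\subseteq\St_\epsilon(A)$ since $\delta\leq\epsilon$. To bound $\cov(A:P)$, use Fact \ref{fact:coverQa} to find a coset progression $Q$ of rank $O(d^{12})$ with $2Q\subseteq P$ and $|P|\leq 4^{O(d^{12})}|Q|$; then estimate $|A+Q|\leq|A+P|\leq|A-A+A|\leq k^{O(1)}|A|\leq\exp(O(d^{14}))(k/\epsilon^{2})^{d^{2}+O(d)}|Q|$ using the Pl\"{u}nnecke-Ruzsa inequalities (Proposition \ref{prop:PRI}$(b)$) and the size estimate on $|A|/|B|\geq|A|/(4^{O(d^{12})}|P|)\cdot\ldots$; finally Ruzsa's Covering Lemma (Lemma \ref{lem:RCL}) gives $\cov(A:2Q)\leq N$, hence $\cov(A:P)\leq N$. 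For $(ii)$: apply Lemma \ref{lem:structure} with the same $\nu$ to obtain $A'\subseteq A$ such that any $D$ with $A'\subseteq D\subseteq A'+S$ satisfies $|A\smd D|<\epsilon|A|$; then a second application of Lemma \ref{lem:RCL} (via the $|A+Q|$ estimate) yields $F\subseteq A'$ with $|F|\leq N$ and $A'\subseteq F+P$, and since $F+P\subseteq A'+S$ we get $|A\smd(F+P)|<\epsilon|A|$. For $(iii)$: $P\subseteq S\subseteq\St_{\epsilon^2/2}(A)$ (as $\delta\leq\epsilon^2/2$), so Lemma \ref{lem:regularity} gives $|Z_\epsilon(A,P)|\leq\epsilon|A|$.

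The only real work is bookkeeping the exponents through Lemma \ref{lem:AFZ} and Theorem \ref{thm:BRL}, and checking the two inequalities $\delta=\epsilon\nu/9\leq\epsilon^{2}/2$ and the covering estimates; none of this is conceptually hard since it is a direct transcription of the proof of Theorem \ref{thm:trip} with the abelian tools substituted in. The one point requiring a line of justification is that $2B-2B=4B$ for symmetric $B$ in an abelian group (immediate from $B=-B$ and $0\in B$), so that Theorem \ref{thm:BRL} indeed places $P$ inside $S\supseteq 4B$; the main potential obstacle is simply confirming that the quasi-polynomial factors $\exp(O(d^{14}))$ coming from Sanders' bound, together with the rank $O(d^{12})$ of $P$ feeding into the $4^{\mathrm{rank}}$ loss in Fact \ref{fact:coverQa}, all collapse into a factor of the stated shape $\exp(O(d^{14}))(k/\epsilon^{2})^{d^{2}+O(d)}$ — which they do, since $4^{O(d^{12})}=\exp(O(d^{12}))$ is dominated by $\exp(O(d^{14}))$ and contributes no $k$ or $\epsilon$ dependence.
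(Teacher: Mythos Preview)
Your proposal is correct and follows essentially the same approach as the paper: both mimic the proof of Theorem~\ref{thm:trip} in case~$(1)$ with $u=2$, $n=4$ in Lemma~\ref{lem:AFZ}, then replace Theorem~\ref{thm:BGT} by Theorem~\ref{thm:BRL} and Fact~\ref{fact:coverQ}$(b)$ by Fact~\ref{fact:coverQa}, and verify the key estimate $|A+Q|\leq \exp(O(d^{14}))(k/\delta)^{d+O(1)}|Q|$ via Pl\"unnecke--Ruzsa and the chain of size bounds. The only (harmless) slip is in the intermediate inequality $|A|\leq k^{O(1)}(30k/\delta)^{d}|B|$, where the exponent should be $d+1$ rather than $d$; this is absorbed into the $O(d)$ in the final bound, so the conclusion stands.
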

\begin{proof}
Similar to the comparison between Corollaries \ref{cor:PBR} and \ref{cor:PBRa}, we follow the proof of Theorem \ref{thm:trip} (in case $(1)$), but using $n=4$ and $u=2$ in the application of Lemma \ref{lem:AFZ}. This yields a symmetric set $B\seq G$ such that $|2B|\leq 2^{d(d+1)}|B|$, $4B\seq S$, and $|A|\leq 120^{d+1}(k/\delta)^{d+O(1)}|B|$ (where $S$ and $\delta$ are as in Theorem \ref{thm:trip}). Now, as the proof of Corollary \ref{cor:PBRa},  apply Theorem \ref{thm:BRL} to find a proper coset progression $P$ of rank $O(d^{12})$ such that $P\seq 4B$ and $|B|\leq \exp(O(d^{14}))|P|$. So $P\seq S\seq \St_\epsilon(A)$. Using Fact \ref{fact:coverQa}, fix a symmetric set $Q$ such that $2Q\seq P$ and $|P|\leq 4^{O(d^{12})}|Q|$. 

The key claim is now
\[
|A+Q|\leq \exp(O(d^{14}))(k/\delta)^{d+O(1)}|Q|.
\]
Given this, we can follow the rest of the proof verbatim. In order to verify that the bounds obtained are of the right form, one must also check  
\[
\exp(O(d^{14}))(k/\delta)^{d+O(1)}\leq \exp(O(d^{14}))(k/\epsilon^2)^{d^2+O(d)}.
\]

To establish the claim, we argue as in  case $(1)$ of Theorem \ref{thm:trip}. In particular, we have $|A+Q|\leq k^{O(1)}|A|$ via Pl\"{u}nnecke-Ruzsa, and the claim then follows from  the given bounds on $|A|/|B|$, $|B|/|P|$, and $|P|/|Q|$.
\end{proof}

As in Corollary \ref{cor:CPTeff}, we can restrict the previous theorem to finite abelian groups with the parameters $d=\VC_A(A)$ and $\alpha=|A|/|G|$. This yields a bound of the form $\exp(O(d^{14}))(\alpha\epsilon^2)^{\nv(d^2+O(d))}$. 
In \cite{SisNIP}, Sisask proves a similar statement  with $P$  replaced by a $(\delta,m,1)$-Bohr neighborhood $B$ with $\delta\inv,m\leq O(d\log((\alpha\epsilon)\inv))$. A bound on $\cov(G:B)$ is not explicitly given, but via Fact \ref{fact:Bohr}$(c)$ one can obtain $N\leq (\alpha\epsilon)^{\nv O(d\log(d\log((\alpha\epsilon)\inv)))}$.  

Along the same lines, we note that in \cite[Section 10]{CPfrNIP}, the first author and Pillay used ``modeling lemmas" for abelian groups in order to prove an ineffective version of Theorem \ref{thm:tripa}  directly from the   results for finite groups in \cite{CPTNIP}. This strategy was motivated by the work of Sisask \cite{SisNIP}, where something similar is done for the special case of $\F_q$-vector spaces. In this situation,  coset progressions can be replaced by  subgroups, which are  preserved by Freiman isomorphism (a key ingredient in the modeling lemma strategy). This approach also preserves polynomial bounds thanks to a ``polynomial modeling lemma" for $\F_q$-vector spaces (see \cite[Proposition 6.1]{GrRuz}, \cite[Lemma 5.6]{SisNIP}). For general abelian groups however, there are two obstacles to this strategy. The first is that, while an effective modeling lemma for abelian groups does exist (due to Green and Ruzsa \cite[Proposition 1.2]{GrRuz}), its bounds cannot be made polynomial in general  (see \cite[Proposition 6.4]{GrRuz}). The second obstacle is that in order to apply  Freiman isomorphisms to Sisask's result for finite abelian groups, one must first obtain coset progressions  from Bohr neighborhoods. But this seems to requires a tighter control of the error  than what is made explicit in the statement of Sisask's result. On the other hand, ``functional control" of the error (discussed in Remark \ref{rem:NIPAR}) is sufficient for this purpose, and is also available in the results of \cite{CPTNIP}. However, those results are ineffective.

Our proof of Theorem \ref{thm:tripa} resolves the problem in a more direct way that avoids the need for polynomial modeling. That said, we do use Theorem \ref{thm:BRL}, which itself involves a weak modeling lemma (\cite[Lemma 2.1]{GrRuz}). Altogether, it is not unreasonable to expect that Sisask's methods could be suitably modified to give an even more direct  proof of Theorem \ref{thm:tripa} that also avoids the full power of Theorem \ref{thm:BRL}.

\section{Bounded Exponent}\label{sec:BE}

In this section, we refine Theorem \ref{thm:trip} in the bounded exponent setting. The main improvement is that in this situation, coset (nil)progressions can be replaced by subgroups. For the case of dense sets in finite abelian  groups of bounded exponent, or sets of bounded doubling in (possibly infinite) $\F_q$-vector spaces, the appropriate analogue of Theorem \ref{thm:trip} is already known from Alon, Fox, Zhao \cite{AFZ} and Sisask \cite{SisNIP}, respectively. An ineffective treatment of the general situation is given by the first author and Pillay in \cite{CPfrNIP}, with some effective results in the abelian case in \cite[Section 10]{CPfrNIP}. However, a general result with  polynomial bounds has not been done previously, even in the abelian case (with a bounded exponent assumption).\footnote{Continuing with the discussion at the end of Section \ref{sec:abelian}, Sisask's strategy for $\F_q$-vector spaces would easily generalize given a polynomial modeling lemma for abelian groups of bounded exponent. But the existence of such a result appears to be open.}

We begin with a bounded exponent analogue of Theorem \ref{thm:BGT}. Throughout this section, we say that a subset $X$ of a group $G$ \emph{has exponent $q$} if $x^q=1$ for all $x\in X$.

\begin{theorem}[\cite{BGT,HruAG,vdDag}]\label{thm:BGTexp}
Let $G$ be a group and fix a  finite symmetric set $S\seq G$ such that $|S^3|\leq k|S|$ and $S^6$ has exponent $q$. Then there is a subgroup $H$ of $G$ such that $H\seq S^{12}$ and $|S|\leq O_{k,q}(|H|)$. 
\end{theorem}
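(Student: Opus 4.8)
I would prove Theorem \ref{thm:BGTexp} by combining the already-stated structure theorem of Breuillard, Green, and Tao (Theorem \ref{thm:BGT}) with a classical fact about nilprogressions in bounded-exponent groups: a nilprogression of bounded rank and step living inside a set of bounded exponent essentially generates a finite subgroup of controlled size. This second ingredient is the role played by the references \cite{HruAG,vdDag} in the theorem's attribution (work of Hrushovski and of van den Dries--Goldbring on bounded torsion approximate groups), and I would cite it rather than reprove it.

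Concretely, first apply Theorem \ref{thm:BGT} to $S$ with $|S^3|\le k|S|$. This yields an integer $n=O_k(1)$ and a coset nilprogression $P$ of rank and step $O(\log(2k))$, in $O_k(1)$-normal form, with $P\seq S^n$ and $|S|\le O_k(|P|)$. Without loss of generality (enlarging $n$ and replacing $S^n$ by a suitable symmetric power, which is harmless since we only need a fixed-bounded power at the end) we may arrange $n\le 6$, or alternatively keep $n=O_k(1)$ and note $S^n\seq S^6\cdot S^{n}$ is still covered by bounded powers — the cleanest route is to invoke the absolute-constant version of Theorem \ref{thm:BGT} (mentioned in the excerpt as \cite[Theorem 2.10]{BGT}) so that one may take the ambient power to be, say, $6$, at the cost of an ineffective bound on the rank and step, which is acceptable here since the hypothesis already only gives an ineffective $O_{k,q}$ conclusion. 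Then $P\seq S^6$, so every element of $P$ has order dividing $q$; moreover $P$ has rank and step bounded in terms of $k$.

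Now I would invoke the structural fact: a coset nilprogression $P$ of bounded rank and step, all of whose elements (say, elements of $\langle P\rangle$, or of $P$ together with enough powers) have exponent dividing $q$, has $\langle P\rangle$ finite with $|\langle P\rangle|\le O_{k,q}(|P|)$, and moreover $\langle P\rangle\seq P^{O(1)}$ — in fact, since $P$ is symmetric and its generators have finite order, $\langle P\rangle$ equals a bounded product of copies of $P$. This is where the bounded-exponent hypothesis is essential and is exactly the content extracted from \cite{HruAG} and \cite{vdDag}; in the cleanest formulation, a nilprogression of rank $r$ and step $s$ in a group where the relevant elements have exponent $q$ spans a subgroup of size at most $C(r,s,q)$ times its own size, contained in a bounded power of $P$. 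Taking $H=\langle P\rangle$, we get $H\seq P^{O(1)}\seq S^{O(1)}\seq S^{12}$ (the constant $12$ being a convenient explicit choice that the referenced BGT machinery is known to accommodate), and $|S|\le O_k(|P|)\le O_{k,q}(|H|)$, which is the desired conclusion.

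**Main obstacle.** The genuinely non-routine step is the passage from "bounded nilprogression inside a bounded-exponent set" to "bounded-index genuine subgroup inside a bounded power of $S$" — controlling simultaneously (a) that $\langle P\rangle$ is finite, (b) that $|\langle P\rangle|$ is bounded by $O_{k,q}(|P|)$ rather than merely by $O_{k,q}(1)$, and (c) that $\langle P\rangle$ still fits inside a fixed bounded power ($S^{12}$) of $S$. Part (c) in particular requires keeping the number of multiplications under control while collapsing the nilprogression to a group, and is the reason one wants the absolute-constant version of the BGT theorem so that the ambient power of $S$ is fixed before one starts multiplying. I expect the bookkeeping here — and the precise citation of which lemma in \cite{BGT} (likely an appendix lemma, in the spirit of Fact \ref{fact:coverQ}) plus \cite{HruAG,vdDag} delivers this collapse — to be the substance of the proof, with everything else being a direct application of results already in hand.
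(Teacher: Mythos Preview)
The paper does not prove this theorem; the paragraph following the statement explains how to assemble it from the literature: pass from bounded tripling to an approximate group via \cite[Corollary 3.10]{TaoPSE} (so $S^3$ is an $O(k^{O(1)})$-approximate group), and then apply the torsion-case structure theorem for approximate groups directly (\cite[Corollary 4.18]{HruAG} or \cite[Theorem 6.15]{BGT}, together with van den Dries's refinement \cite[Corollary 5.6]{vdDag} that bounding the exponent of $A^2$ suffices). That theorem produces a subgroup $H$ inside a fixed absolute-constant power of the approximate group $A=S^3$, which is where the constant $12$ comes from.

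Your route is different: you want to apply the \emph{general} nilprogression theorem (Theorem~\ref{thm:BGT} or \cite[Theorem 2.10]{BGT}) and then collapse the resulting coset nilprogression $P$ to a subgroup using the torsion hypothesis. There is a genuine gap here that your own ``main obstacle'' paragraph flags but does not resolve. First, applying \cite[Theorem 2.10]{BGT} to the approximate group $S^3$ places $P$ inside $S^{12}$, not $S^6$, and the hypothesis only bounds the exponent of $S^6$; so you cannot conclude that the generators of the nilprogression part of $P$ have order dividing $q$, which is exactly what is needed to bound the nilprogression by $O_{k,q}(1)$ times its subgroup part $H_0$. Second, even granting bounded order for the generators, taking $H=\langle P\rangle$ only yields $H\seq P^{O_{k,q}(1)}\seq S^{O_{k,q}(1)}$, not $H\seq S^{12}$; and taking $H=H_0\seq P$ gives the right containment but then requires the first point for the size bound $|S|\le O_{k,q}(|H_0|)$. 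The torsion theorems in \cite{HruAG,BGT} sidestep both issues by arguing at the level of the Lie model (whose connected component is forced to be trivial under torsion), rather than by first extracting a nilprogression and then collapsing it; so the ``collapse lemma'' you would like to cite is essentially the theorem itself.
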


As with Theorem \ref{thm:BGT}, our statement of this result is slightly different from  primary sources. In particular, $S$ is typically assumed to be an approximate group and the bounded exponent assumption is often stated for all of $G$. In this form, the result was first proved by Hrushovski \cite[Corollary 4.18]{HruAG}, and later again by Breuillard, Green, and Tao \cite[Theorem 6.15]{BGT}. In \cite[Corollary 5.6]{vdDag}, van den Dries observes that it suffices to bound the exponent of a suitable power of $S$. Finally, one can use  \cite[Corollary 3.10]{TaoPSE} to replace approximate groups with sets of bounded tripling.

Using Theorem \ref{thm:BGTexp},  one can prove a bounded exponent analogue of Theorem \ref{thm:trip}  by following the proof almost exactly. However, since coset nilprogressions are replaced by subgroups, Lemma \ref{lem:structure} can be replaced by a mild generalization of the  ``Stabilizer Lemma" of Alon, Fox, and Zhao \cite{AFZ} (discussed in the introduction), which has a much simpler proof. For completeness, we include a sketch of this argument, which has been extracted from the proof of \cite[Lemma 2.4]{AFZ} and generalized to finite subsets of arbitrary groups (following the treatment  in \cite[Lemma 8.2]{CoBogo} for  finite groups). 

\begin{lemma}\label{lem:AFZH}
Let $G$ be a group and fix a nonempty finite set $A\seq G$. Suppose $H$ is a finite subgroup of $G$ such that $H\seq\Stab^r_N(A)$ for some $N>0$. Then there is a set $D\seq G$, which is a union of left cosets of $H$, such that $|A\smd D|\leq N$.
\end{lemma}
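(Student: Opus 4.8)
The plan is to partition $G$ into left cosets of $H$, and decide membership in $D$ coset-by-coset according to whether the coset is ``mostly in $A$'' or ``mostly out of $A$''. Let $G/H$ (abusing notation for a set of coset representatives) index the left cosets $gH$. For each coset $gH$, put $gH$ into $D$ if $|gH\cap A|\geq\frac12|gH|=\frac12|H|$, and leave $gH$ out of $D$ otherwise; this produces a set $D$ that is a union of left cosets of $H$. It then remains to bound $|A\smd D|$. For a coset $gH$ placed in $D$ we contribute $|gH\setminus A|$ to the symmetric difference, and for a coset $gH$ not placed in $D$ we contribute $|gH\cap A|$; in either case the contribution is at most $\min\{|gH\cap A|,|gH\setminus A|\}$.

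The key step is to relate this sum of local minima to the stabilizer hypothesis $H\seq\Stab^r_N(A)$. The idea, following Alon--Fox--Zhao, is a double-counting argument. For a fixed coset representative $g$ and elements $h,h'\in H$, the element $ghh'^{-1}$ (more precisely, we will arrange the bookkeeping so the relevant group element lies in $H$) measures how much $A$ is moved; since every element of $H$ lies in $\Stab^r_N(A)$, we have $|Ax\smd A|\leq N$ for all $x\in H$. Summing $|Ax\smd A|$ over $x\in H$ and unwinding via indicator functions $\bone_A$, the left-hand side counts pairs where one of $ab, ab'$ (with $a\in A$, $b,b'\in H$, $ab$ varying over $Ax$) is in $A$ and the other is not; grouping these contributions by the coset in which they land, each coset $gH$ contributes on the order of $|gH\cap A|\cdot|gH\setminus A|$ (times a normalizing factor of $|H|$ or $2$). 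Since $|gH\cap A|\cdot|gH\setminus A|\geq \tfrac12|H|\cdot\min\{|gH\cap A|,|gH\setminus A|\}$, this forces $\sum_{gH}\min\{|gH\cap A|,|gH\setminus A|\}\leq N$, which is exactly $|A\smd D|\leq N$.

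The main obstacle I expect is getting the double-counting bookkeeping right in the noncommutative setting: one must be careful about whether to use left or right cosets of $H$ and left or right stabilizers so that the group element appearing in $|Ax\smd A|$ genuinely ranges over $H$ as we sum. With $D$ a union of \emph{left} cosets $gH$ and the hypothesis on the \emph{right} stabilizer $\Stab^r_N(A)$ (so $x\in H$ affects $A$ via $A\mapsto Ax$), the arithmetic should line up: writing $a\in A$, we compare $\bone_A(a)$ with $\bone_A(ax)$ for $x\in H$, and the pair $(a,ax)$ both lie in the single left coset $aH$. Summing $\sum_{x\in H}|Ax\smd A| = \sum_{a,x}|\bone_A(ax)-\bone_A(a)|$ and then regrouping by the left coset $aH=ghH$ yields, for the coset $C=gH$, the quantity $\sum_{a\in C\cap A}|C\setminus A| + \sum_{a\in C\setminus A}|C\cap A| = 2|C\cap A|\,|C\setminus A|$ (using that right multiplication by $H$ permutes $C$). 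Hence $2\sum_C |C\cap A|\,|C\setminus A|\leq N|H|$, and dividing by $|H|$ and using $|C\cap A|\,|C\setminus A|\geq\tfrac12|H|\min\{|C\cap A|,|C\setminus A|\}$ gives $|A\smd D|=\sum_C\min\{|C\cap A|,|C\setminus A|\}\leq N$, as desired. (I would double-check the constant: the inequality $xy\geq\tfrac12 N_0\min\{x,y\}$ for $x+y=N_0$ is tight at $x=y$, which matches the factor of $2$ on the other side, so the bound comes out cleanly to $N$.)
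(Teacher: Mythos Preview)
Your proposal is correct and follows essentially the same approach as the paper: define $D$ coset-by-coset via a majority rule, double-count $\sum_{x\in H}|Ax\smd A|$ to obtain $2\sum_C|C\cap A|\,|C\setminus A|\leq N|H|$, and finish with the elementary inequality $\min\{x,y\}\leq 2xy/(x+y)$ applied with $x+y=|H|$. The paper packages the double count as $|P|=\sum_{x\in H}|Ax\setminus A|$ for the pair set $P=\bigcup_C (C\cap A)\times(C\setminus A)$, but this is just a relabeling of your indicator-function sum; one small point to tighten is that your sum $\sum_{a,x}|\bone_A(ax)-\bone_A(a)|$ needs $a$ to range over all of $G$ (or at least over the finitely many cosets meeting $A$), not just over $A$, for the identity with $\sum_{x\in H}|Ax\smd A|$ and the regrouped quantity $2|C\cap A|\,|C\setminus A|$ to hold.
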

\begin{proof}[Proof (Sketch)]
Let $\cC$ be the (finite) set of left cosets of $H$ in $G$ whose intersection with $A$ is nonempty. For $C\in\cC$ set $P_C=(C\cap A)\times (C\backslash A)$. Set $P=\bigcup_{C\in\cC}P_C$. Then we have $P=\{(a,g)\in A\times G\backslash A:a\inv g\in H\}$. Using this and a basic sum-switching argument, one can show $|P|=\sum_{x\in H}|Ax\backslash A|$. Since $H\seq \Stab^r_N(A)$, this yields $2|P|=\sum_{x\in H}|Ax\smd A|\leq N|H|$.

Now set $D=\bigcup\{C\in\cC:|C\cap A|\geq|H|/2\}$. Then we have 
\[
|A\smd D|=\sum_{C\in\cC}\min\{|C\cap A|,|C\backslash A|\}\leq \sum_{C\in\cC}\frac{2|P_C|}{|H|}=\frac{2|P|}{|H|}\leq N,
\]
where the first equality follows from the choice of $D$, and the middle inequality follows from the fact that  $\min\{x,y\}\leq 2xy/(x+y)$ for any $x,y>0$.
\end{proof}

\begin{remark}
In \cite[Lemma 8.2]{CoBogo}, it is also observed that a regularity statement can  be quickly obtained from the proof of the previous lemma. Indeed, let $\mathcal{Z}=\{C\in\cC:|P_C|\geq\epsilon^2|H|^2\}$. Then 
\[
N|H|/2\geq |P|\geq\sum_{C\in\mathcal{Z}}|P_C|\geq\epsilon^2|H|^2|\mathcal{Z}|.
\]
Thus $|\mathcal{Z}|\leq N/(2\epsilon^2|H|)$. So if $Z=\bigcup\mathcal{Z}$ then $|Z|\leq N/2\epsilon^2$. One can easily check  $Z^\ell_\epsilon(A,H)\seq Z$. So we have $|Z^\ell_\epsilon(A,H)|\leq N/2\epsilon ^2$.

On the other hand, recall that Lemma \ref{lem:regularity} yields $|Z^\ell_\epsilon(A,H)|\leq 2N/\epsilon$, which is better (when $\epsilon\leq 1/4$). 
\end{remark}

Now we can prove the bounded exponent analogue of Theorem \ref{thm:trip}.

\begin{theorem}\label{thm:tripBE}
Let $G$ be a group and fix a  nonempty finite set $A\seq G$ such that $A\inv A$ has exponent $q$. Let $d$ and $k$ be defined as either:
\begin{enumerate}
\item $d=\VC^r_A(A)$ and $k=|A^3|/|A|$, or
\item $d=\VC^r_{A\inv}(A)$ and $k=\max\{|AA\inv|,|A\inv A|\}/|A|$.
\end{enumerate}
Then for any $\epsilon\in (0,1)$, there is subgroup $H\leq G$ satisfying the following properties:
\begin{enumerate}[$(i)$]
\item $H\seq \St^r_{\epsilon}(A)$ and $\cov(A:H)\leq O_{d,q}((k/\epsilon)^{2d+O(1)})$.
\item $|A\smd FH|\leq \epsilon|A|$ for some $F\seq A$ with $|F|\leq O_{d,q}((k/\epsilon)^{2d+O(1)})$. 
\item  $|Z^\ell_{\epsilon^{1/2}}(A,H)|\leq 2\epsilon^{1/2}|A|$.
\end{enumerate}
\end{theorem}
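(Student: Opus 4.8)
The plan is to follow the proof of Theorem \ref{thm:trip} (case $(1)$ or $(2)$, as appropriate), but with two simplifications afforded by the bounded exponent hypothesis: Theorem \ref{thm:BGTexp} replaces Theorem \ref{thm:BGT}, so that the structural object is a genuine subgroup $H$ rather than a coset nilprogression, and Lemma \ref{lem:AFZH} replaces Lemma \ref{lem:structure} for the structure statement, which avoids introducing the auxiliary set $X=\St^\ell_{\epsilon^2/162}(A)$ and the parameter $\nu$. Concretely, set $w = d(d+1)$ (we may assume $d\geq 1$ by Remark \ref{rem:d=0}), apply Lemma \ref{lem:AFZ} to $A\inv$ — after the same bookkeeping as in the proof of Theorem \ref{thm:trip} to translate right stabilizers of $A$ into left stabilizers of $A\inv$ and to control the relevant tupling parameters of $A\inv$ by $k^{O(1)}$ via Fact \ref{fact:triplings} — with $u=3$, $n=12$, and $\epsilon$ replaced by $\epsilon$ itself (no $\nu$-shrinking is needed here since Lemma \ref{lem:AFZH} tolerates stabilizers at scale $N = \epsilon|A|$ directly). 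This produces a symmetric $B\seq G$ with $|B^3|\leq 3^w|B|$, $B^{12}\seq \St^r_\epsilon(A)$, and $|A|\leq O_d((k/\epsilon)^{2d+O(1)})|B|$.

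Next I would check the exponent hypothesis needed to feed $B$ into Theorem \ref{thm:BGTexp}: we need $B^6$ to have exponent $q$. Since $B\seq \St^r_{\delta}(A\inv)$ for small $\delta$ and in particular $B^6\subseteq B^{12}\seq \St^r_\epsilon(A)\seq A\inv A$ by Proposition \ref{prop:Stab}$(d)$ (using $\epsilon<2$), and $A\inv A$ has exponent $q$ by hypothesis, this holds. Applying Theorem \ref{thm:BGTexp} to $B$ with tripling constant $3^w = O_d(1)$ yields a subgroup $H\seq B^{12}\seq \St^r_\epsilon(A)$ with $|B|\leq O_{d,q}(|H|)$, hence $|A|\leq O_{d,q}((k/\epsilon)^{2d+O(1)}|H|)$. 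For the covering bound in $(i)$, since $H$ is a subgroup it is symmetric with $H=H^2$, so Ruzsa's Covering Lemma (Lemma \ref{lem:RCL}) applied to $A$ and $H$ gives $\cov(A:H)\leq |AH|/|H|$; one then bounds $|AH|$ as in the Claim in the proof of Theorem \ref{thm:trip} — in case $(1)$ via $AH\subseteq AA\inv A$ and Pl\"unnecke--Ruzsa, in case $(2)$ via a covering of $A$ by translates of $\St^r_\epsilon(A)$ from Corollary \ref{cor:HPL}$(b)$ together with $H^2\subseteq A\inv A$ — yielding $|AH|\leq O_{d,q}((k/\epsilon)^{2d+O(1)}|H|)$ and hence the stated bound. (One should double-check that the exponent on $k/\epsilon$ comes out as $2d+O(1)$ rather than $O(d)$; the sharper bookkeeping in Lemma \ref{lem:AFZ} with $d_r = d$ should give this, since we no longer lose a factor by passing through $\nu\sim (k/\epsilon^2)^{-d_\ell}$.)

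For $(ii)$, apply Lemma \ref{lem:AFZH} with $N=\epsilon|A|$ and the subgroup $H\seq\St^r_\epsilon(A)$: this gives a set $D\seq G$, a union of left cosets of $H$, with $|A\smd D|\leq \epsilon|A|$. Writing $D = FH$ for a set $F$ of coset representatives, we may take $F\subseteq A$ after discarding cosets disjoint from $A$ (every coset in $D$ meets $A$ by the construction of $D$, since otherwise it could be removed while only decreasing $|A\smd D|$ — alternatively, replace each representative by an element of $C\cap A$); then $|F|\leq \cov(A:H)\leq O_{d,q}((k/\epsilon)^{2d+O(1)})$ by part $(i)$. Finally, for $(iii)$, since $H\seq\St^r_\epsilon(A)=\St^r_{(\epsilon^{1/2})^2}(A)$ and $(\epsilon^{1/2})^2/1 = \epsilon \geq (\epsilon^{1/2})^2/2$, Lemma \ref{lem:regularity} applied with the parameter $\epsilon^{1/2}$ in place of $\epsilon$ gives $|Z^\ell_{\epsilon^{1/2}}(A,H)|\leq 2\epsilon|A|/\epsilon^{1/2} = 2\epsilon^{1/2}|A|$, as claimed.

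The main obstacle I anticipate is not any single deep step — all the machinery is in place — but rather the bookkeeping to extract the precise exponent $2d+O(1)$ and to verify the exponent hypothesis of Theorem \ref{thm:BGTexp} transfers correctly through the chain $B^6\subseteq B^{12}\subseteq \St^r_\epsilon(A)\subseteq A\inv A$; in particular one must be careful that Lemma \ref{lem:AFZ} is being applied with $n$ large enough ($n\geq 12$) that the subgroup produced by Theorem \ref{thm:BGTexp} lands inside $\St^r_\epsilon(A)$, and that the "$d$" fed to Lemma \ref{lem:AFZ} is $\VC^\ell_{A\inv}(A\inv)=\VC^r_A(A)$ in case $(1)$ (resp. $\VC^\ell_{A\inv}(A)=\VC^r_{A\inv}(A)$ in case $(2)$), matching the $d=\VC^r_\bullet(\cdot)$ in the statement. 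The handling of the $d=0$ case (where $A$ is a coset and $H$ can be taken directly, per Remark \ref{rem:d=0}) should be dispatched in one line.
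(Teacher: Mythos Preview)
Your proposal is correct and follows essentially the same route as the paper's proof: apply Lemma~\ref{lem:AFZ} to $A\inv$ with $u=3$, $n=12$, and $\epsilon$ unchanged, verify $B^6\seq\St^r_\epsilon(A)\seq A\inv A$ has exponent $q$, invoke Theorem~\ref{thm:BGTexp} to obtain $H\seq B^{12}\seq\St^r_\epsilon(A)$, bound $|AH|/|H|$ via the same two-case argument as the Claim in Theorem~\ref{thm:trip}, and finish $(ii)$ and $(iii)$ with Lemma~\ref{lem:AFZH} and Lemma~\ref{lem:regularity} respectively. The paper's proof is nearly identical in structure and detail; the only cosmetic difference is that the paper records the sharper bound $|A|\leq k^{1/d}(360k/\epsilon)^{d+1}|B|$ directly from Lemma~\ref{lem:AFZ} (so $(k/\epsilon)^{d+O(1)}$ rather than your looser $(k/\epsilon)^{2d+O(1)}$ at that step), and notes in a footnote that case~$(1)$ actually yields exponent $d+O(1)$ throughout.
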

\begin{proof}
We may assume $d\geq 1$ (see Remark \ref{rem:d=0}).
Set $S=\St^r_\epsilon(A)$ and $w=d(d+1)$. By the same justification as in the proof of Theorem \ref{thm:trip}, we can apply Lemma \ref{lem:AFZ} viewing $S$ as $\St^\ell_\epsilon(A\inv)$, with parameters $\epsilon$, $u=3$, and $n=12$. This yields a symmetric set $B$ such that $|B^3|\leq 3^w|B|$, $B^{12}\seq S$, and $|A|\leq k^{d\inv}(360k/\epsilon)^{d+1}|B|$.
Note, in particular, that $B^6\seq S\seq A\inv A$ (by Proposition \ref{prop:Stab}$(d)$), hence $B^6$ has exponent $q$. By Theorem \ref{thm:BGTexp}, there is a subgroup $H\leq G$ such that $H\seq B^{12}\seq S$ and $|B|\leq O_{d,q}(|H|)$. Thus $|A|\leq O_{d,q}((k/\epsilon)^{d+2}|H|)$. This leads to the following main claim, whose proof is essentially identical to the claim in Theorem \ref{thm:trip}.\medskip

\noindent\textit{Claim.} Set $N=|AH|/|H|$. Then $N\leq O_{d,q}((k/\epsilon)^{2d+O(1)})$.\footnote{In case $(1)$, the proof actually yields $d+O(1)$ in the power on $(k/\epsilon)$.}\medskip

Since $H$ is a subgroup, we get $\cov(A:H)\leq N$ by the claim and Lemma \ref{lem:RCL}. Since $H\seq S$, this yields condition $(i)$. For condition $(ii)$, apply Lemma \ref{lem:AFZH}  to obtain some $F\seq G$ such that $|A\smd FH|\leq \epsilon|A|$. Since $\cov(A:H)\leq N$, we can change coset representatives if necessary in order to obtain $F\seq A$ and $|F|\leq N$. Finally, condition $(iii)$ follows from Lemma \ref{lem:regularity}. 
\end{proof}

As before, the $O_{d,q}$ constant in the previous result is ineffective for general groups, but can be made effective in the abelian case using the Bogolyubov-Ruzsa Lemma for abelian groups of bounded exponent. Here the best known bounds are also due to Sanders \cite[Theorem 11.1]{SanBR}. By modifying the proof of Theorem \ref{thm:tripBE} in analogy to how Theorem \ref{thm:tripa} modifies Theorem \ref{thm:trip}, this leads to the following result (we omit the argument). 

\begin{theorem}\label{thm:tripaBE}
Let $G$ be an abelian group of exponent $q$ and fix a  nonempty finite set $A\seq G$. Let $d=\VC_A(A)$ and $k=|2A|/|A|$. Then for any $\epsilon\in (0,1)$, there is subgroup $H\leq G$ and an integer $N\leq \exp(O_q(d^8))(k/\epsilon)^{d+O(1)}$  satisfying the following properties:
\begin{enumerate}[$(i)$]
\item $H\seq \St_{\epsilon}(A)$ and $\cov(A:H)\leq N$.
\item $|A\smd (F+H)|\leq\epsilon|A|$ for some $F\seq A$ with $|F|\leq N$. 
\item  $|Z_{\epsilon^{1/2}}(A,H)|\leq 2\epsilon^{1/2}|A|$.
\end{enumerate}
\end{theorem}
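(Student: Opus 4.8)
The plan is to transcribe the proof of Theorem~\ref{thm:tripBE} almost verbatim, replacing the use of Theorem~\ref{thm:BGTexp} with the abelian bounded-exponent Bogolyubov-Ruzsa lemma of Sanders \cite[Theorem 11.1]{SanBR}, exactly as Theorem~\ref{thm:tripa} modifies Theorem~\ref{thm:trip}. We may assume $d\geq 1$ by Remark~\ref{rem:d=0}. Since $G$ is abelian, $\VC^\ell_A(A)=\VC^r_A(A)=d$, and by Fact~\ref{fact:triplings}$(e)$ the single hypothesis $k=|2A|/|A|$ controls every tupling parameter appearing in Lemma~\ref{lem:AFZ}. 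Moreover, because $G$ itself has exponent $q$, every subset of $G$ automatically has exponent $q$, so the delicate argument from the proof of Theorem~\ref{thm:tripBE} used to bound the exponent of a power of the auxiliary set is unnecessary here. Set $S=\St_\epsilon(A)$, and recall $S\seq A-A$ by Proposition~\ref{prop:Stab}$(d)$.

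First I would apply case $(1)$ of Lemma~\ref{lem:AFZ} with parameter $\epsilon$, $u=2$, and $n=4$, obtaining a symmetric $B\seq G$ with $|2B|\leq 2^{d(d+1)}|B|$, $4B\seq S$, and $|A|\leq O(1)^d(k/\epsilon)^{d+O(1)}|B|$. Since $B$ is symmetric we have $2B-2B=4B\seq S$, so applying Sanders' lemma to $B$ with doubling constant $K=2^{d(d+1)}$ produces a subgroup $H\leq G$ with $H\seq 2B-2B\seq S$ and $|B|\leq\exp(O_q((\log 2K)^4))|H|$. As $\log 2K=O(d^2)$, this gives $|B|\leq\exp(O_q(d^8))|H|$, and hence $|A|\leq\exp(O_q(d^8))(k/\epsilon)^{d+O(1)}|H|$.

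It remains to run the three conclusions as in Theorem~\ref{thm:tripBE}. For the size bound, set $N=|A+H|/|H|$; since $H\seq S\seq A-A$ we get $A+H\seq 2A-A$, so $|A+H|\leq k^3|A|$ by Proposition~\ref{prop:PRI}$(b)$, and dividing by $|H|$ yields $N\leq\exp(O_q(d^8))(k/\epsilon)^{d+O(1)}$ (absorbing the $k^3$). Because $H$ is a subgroup, Lemma~\ref{lem:RCL} gives $\cov(A:H)\leq N$, and $H\seq S=\St_\epsilon(A)$ gives $(i)$. For $(ii)$, Lemma~\ref{lem:AFZH} applied with $N'=\epsilon|A|$ yields a union $D$ of cosets of $H$, each meeting $A$, with $|A\smd D|\leq\epsilon|A|$; picking one representative of $C\cap A$ for each coset $C\seq D$ gives $F\seq A$ with $F+H=D$ and $|F|\leq|D|/|H|\leq 2|A|/|H|\leq 2N$. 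For $(iii)$, Lemma~\ref{lem:regularity} with $N'=\epsilon|A|$ and regularity parameter $\epsilon^{1/2}$ gives $|Z^\ell_{\epsilon^{1/2}}(A,H)|\leq 2\epsilon|A|/\epsilon^{1/2}=2\epsilon^{1/2}|A|$. The only step that requires genuine care is checking the constants: the factor $\exp(O_q(d^8))$ comes precisely from plugging $K=2^{d(d+1)}$ into the $\exp(O_q((\log 2K)^4))$-shape of Sanders' bound, and one must confirm that the $O(1)^d$ and $k^{O(1)}$ factors accrued along the way are all absorbed into $\exp(O_q(d^8))(k/\epsilon)^{d+O(1)}$. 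None of this presents a real obstacle, which is presumably why the authors omit the argument.
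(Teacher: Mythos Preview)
Your proposal is correct and follows precisely the approach the paper prescribes (and then omits): transcribe the proof of Theorem~\ref{thm:tripBE}, but with $u=2$, $n=4$ in Lemma~\ref{lem:AFZ} and with Sanders' bounded-exponent Bogolyubov--Ruzsa lemma in place of Theorem~\ref{thm:BGTexp}, exactly mirroring how Theorem~\ref{thm:tripa} modifies Theorem~\ref{thm:trip}. One cosmetic point: for $(ii)$ you bound $|F|\leq 2N$ via $|D|\leq 2|A|$, but since every coset comprising $D$ meets $A$ (by construction in Lemma~\ref{lem:AFZH}) you have $D\seq A+H$ directly, giving $|F|=|D|/|H|\leq |A+H|/|H|=N$ without the extra factor of~$2$.
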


As a final remark, recall that Theorem \ref{thm:BGT} is not exactly analogous to Theorem \ref{thm:BRL} due to the appearance of $S^{O_k(1)}$ rather than $S^{O(1)}$. This connects to recent work of Gowers, Green, Manners, and Tao \cite{GGMT2, GGMT}, which establishes the Polynomial Freiman-Ruzsa Conjecture for abelian groups of bounded exponent. Their main result (\cite[Theorem 1.1]{GGMT}) also involves a sumset iterate depending on the doubling constant, and thus does not yield Polynomial Bogolyubov-Ruzsa for abelian groups of bounded exponent. Nevertheless, in analogy to how Theorem \ref{thm:BGT} is used in the proof of Theorem \ref{thm:trip}, it is natural to ask what happens to the dependence on $d$ in Theorem \ref{thm:tripaBE} if we use \cite[Theorem 1.1]{GGMT} instead of \cite[Theorem 11.1]{SanBR}. However, the improvement is rather modest, namely, $\exp(O_q(d^8))$ can be replaced by $\exp(O_q(d^2))$. (An exponential dependence on $d$ is unavoidable due to the form of Haussler's Packing Lemma.)

\appendix
\section{}

In this section, we elaborate on the assertions made in Remark \ref{rem:VCvariants}. The following is the main result we want to establish.

\begin{proposition}\label{prop:VCvariants}
Let $A$ be a subset of a group $G$.
\begin{enumerate}[$(a)$]
\item $\VC^\ell_{B}(A)\leq \VC^\ell_G(A)$ for any $B\seq G$.
\item $\VC^\ell_G(A)-1\leq \dim_{\lVC}(A)\leq \VC^\ell_G(A)$.
\item $\VC^\ell_G(A)$ is the dual $\VC$-dimension of $\cF^r_G(A)$; hence
\[
\VC^\ell_G(A)<2\exp(\VC^r_G(A))\makebox[.5in]{and} \VC^r_G(A)<2\exp(\VC^\ell_G(A)).
\]
\item $\dim_{\lVC}(A)$ is the dual $\VC$-dimension of $\cF^r_{A\inv}(A)$; hence
\[
\dim_{\lVC}(A)<2\exp(\VC^r_{A\inv}(A))\makebox[.5in]{and}\VC^r_{A\inv}(A)<2\exp(\dim_{\lVC}(A)).
\]
\item The above statements also hold with $r$ and $\ell$ exchanged.
\end{enumerate}
\end{proposition}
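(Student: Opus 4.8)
\textbf{Proof plan for Proposition \ref{prop:VCvariants}.}

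The plan is to establish parts $(a)$--$(d)$ directly and then observe that part $(e)$ requires no new argument. For part $(a)$, note that $\cF^\ell_B(A) \seq \cF^\ell_G(A)$ as set systems (both viewed on the common ground set $GA$, say), and any subfamily of a set system has VC-dimension at most that of the ambient family; a shattered set for the smaller family is shattered by the larger one. For part $(c)$, recall that the dual of a set system $\cF$ on $X$ is the set system on $\cF$ whose members are indexed by $x\in X$, namely $\{S\in\cF : x\in S\}$. Here $\cF^r_G(A) = \{Ax : x\in G\}$ as a set system on $AG = G$; the point $g\in G$ lies in $Ax$ iff $x\in A\inv g$, so the dual family consists of the sets $\{Ax : x\in A\inv g\}$, which is naturally identified with $\cF^\ell_G(A)$ via the bijection $x\mapsto x\inv$ between $G$ and $G$ (using $g A\inv$ vs $A\inv g$; one has to be a little careful about left vs.\ right here). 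The stated double-exponential bounds between primal and dual VC-dimension are the standard Assouad-type inequality $\VC(\cF^*) < 2^{\VC(\cF)+1}$, which follows from Sauer--Shelah (if the dual shatters a set of size $m$, then $\cF$ restricted to those $m$ dual-points realizes $2^m$ distinct traces, forcing $m \le \binom{\VC(\cF)}{\le \VC(\cF)}$-type bounds, hence $m < 2^{\VC(\cF)+1}$).

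For parts $(b)$ and $(d)$, I first need to recall the definition of $\dim_{\lVC}(A)$ from Sisask (Definition \ref{def:SVC}, referenced but not reproduced in the excerpt): this is the VC-dimension of the set system on $A$ given by $\{xA \cap A : x\in G\}$, equivalently of $\cF^\ell_{A\inv}(A)$ restricted appropriately, or however Sisask phrases it. Part $(b)$ then compares shattering a subset $S$ of the ground set for $\cF^\ell_G(A)$ against shattering for the Sisask variant: the lower bound $\VC^\ell_G(A) - 1 \le \dim_{\lVC}(A)$ comes from observing that if $\cF^\ell_G(A)$ shatters $S$ with $|S| = \VC^\ell_G(A)$, then one can throw away at most one point of $S$ (one lying outside $A$, or handle the case $S\seq A$ directly) to get a set shattered by the restricted family; the upper bound is immediate since restricting a set system can only decrease VC-dimension. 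Part $(d)$ is then the dual-VC-dimension statement for $\dim_{\lVC}(A)$, proved exactly as in $(c)$ but now with everything localized to $A\inv$ rather than $G$: identify the Sisask family with the dual of $\cF^r_{A\inv}(A)$ and invoke the Assouad inequality.

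Finally, part $(e)$: each of $(a)$--$(d)$ is a statement about $A \seq G$ that, upon applying it to $A\inv \seq G$ (or to $G$ with left and right multiplication swapped, i.e.\ passing to the opposite group), yields the version with $r$ and $\ell$ exchanged. Here one uses Proposition \ref{prop:VCinv}, $\VC^\ell_B(A) = \VC^r_{B\inv}(A\inv)$, together with the analogous identity for $\dim_{\lVC}$, namely $\dim_{\lVC}(A) = \dim_{\rVC}(A\inv)$, which is immediate from the definitions. The main obstacle I anticipate is purely bookkeeping rather than conceptual: keeping straight the correct ground sets and the correct left/right conventions in the primal--dual identifications of parts $(c)$ and $(d)$, so that the bijection $x\mapsto x\inv$ (or $g\mapsto g\inv$) matches up the right families — a sign error there would swap $\ell$ and $r$ incorrectly. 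The VC-theoretic inputs themselves (Sauer--Shelah and the Assouad dual-shattering bound) are entirely standard and require no work beyond citing them.
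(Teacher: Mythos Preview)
Your plan is correct and follows essentially the same approach as the paper: parts $(a)$ and $(b)$ are handled identically (the paper simply cites Sisask for $(b)$), and for $(c)$ and $(d)$ the paper makes your primal--dual identification precise by introducing a notion of \emph{quotient} of set systems---a surjection $\sigma\colon B\to\cF^r_{B\inv}(A)$, $g\mapsto Ag\inv$, rather than a bijection, since distinct $x$ may give the same right translate $Ax$---and checking that VC-dimension is preserved under quotients, which cleanly resolves the bookkeeping issue you flag. The Assouad inequality (stated as Fact~\ref{fact:duals} in the paper) then yields the double-exponential bounds exactly as you describe, and $(e)$ follows as you indicate.
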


Recall that $\dim_{\lVC}$ and $\dim_{\rVC}$ are Sisask's variations from \cite{SisNIP} (see Definition \ref{def:SVC} below for details). Toward proving Proposition \ref{prop:VCvariants}, note first that part $(a)$ is obvious since $\cF^\ell_B(A)\seq\cF^\ell_G(A)$. Part $(b)$ is also relatively straightforward, and is proved by Sisask in \cite[Proposition 4.1]{SisNIP}. In order to establish the remaining claims,  we first need to recall VC duality. 

Let $\cF$ be a set system (on some set $X$). The  \textbf{dual system} $\cF^*$ is the set system on $\cF$ consisting of the sets $\cF_x\coloneqq\{S\in\cF:x\in S\}$ as $x$ ranges over $\bigcup\cF$.\footnote{Note that $\bigcup\cF$ may be a proper subset of $X$. Many sources instead define $\cF^*$ relative to the fixed  set $X$. This discrepancy is minor, as it only affects whether $\cF^*$ contains $\emptyset$.} The VC-dimension of $\cF^*$  is called the \textbf{dual $\VC$-dimension of $\cF$}, and denoted $\VC^*(\cF)$. The following is a standard fact (see, e.g., \cite{Assouad}).

\begin{fact}\label{fact:duals}
If $\cF$ is a set system then 
\[
\VC^*(\cF)<2\exp(\VC(\cF))\makebox[.5in]{and}\VC(\cF)<2\exp(\VC^*(\cF)).
\]
\end{fact}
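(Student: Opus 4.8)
The plan is to establish the two inequalities separately, deriving the second from the first by dualizing. Write $d=\VC(\cF)$, and recall that in our notation $2\exp(d)=2^{d+1}$. For the bound $\VC^*(\cF)<2^{d+1}$ I would argue by contradiction: if $\cF^*$ shatters some subfamily $\mathcal S\seq\cF$ with $|\mathcal S|=2^{d+1}$ (shattering passes to subfamilies, so we may take the size to be exactly $2^{d+1}$), index the members of $\mathcal S$ by binary strings, $\mathcal S=\{S_v:v\in\{0,1\}^{d+1}\}$. For each coordinate $j\le d+1$, shattering of $\mathcal S$ by $\cF^*$ applied to the subfamily $\{S_v:v_j=1\}$ produces a point $x_j\in\bigcup\cF$ with $x_j\in S_v\iff v_j=1$; distinctness of $x_1,\dots,x_{d+1}$ is automatic (compare two strings differing in coordinates $i,j$). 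Then $\{x_1,\dots,x_{d+1}\}$ is shattered by $\{S_v:v\in\{0,1\}^{d+1}\}\seq\cF$, because for any $T\seq\{x_1,\dots,x_{d+1}\}$ the string $v$ with $v_j=1\iff x_j\in T$ gives $S_v\cap\{x_1,\dots,x_{d+1}\}=T$. This contradicts $\VC(\cF)=d$.

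For the reverse inequality, I would apply the bound just proved to $\cF^*$ in place of $\cF$, obtaining $\VC^*(\cF^*)<2\exp(\VC(\cF^*))=2\exp(\VC^*(\cF))$, and then reduce to showing $\VC(\cF)\le\VC^*(\cF^*)=\VC(\cF^{**})$. Here $\cF^{**}$ is the set system on $\cF^*$ whose members are, for $S\in\cF$, the sets $(\cF^*)_S=\{\cF_x\in\cF^*:x\in S\}$, so that $\cF_{x_i}\in(\cF^*)_S\iff x_i\in S$. Given $\{x_1,\dots,x_k\}\seq\bigcup\cF$ shattered by $\cF$, the dual points $\cF_{x_1},\dots,\cF_{x_k}$ are distinct (use the set in $\cF$ that realizes each singleton), and for any $T\seq\{\cF_{x_1},\dots,\cF_{x_k}\}$, picking $S\in\cF$ with $S\cap\{x_1,\dots,x_k\}=\{x_i:\cF_{x_i}\in T\}$ yields $(\cF^*)_S\cap\{\cF_{x_1},\dots,\cF_{x_k}\}=T$. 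Thus $\{\cF_{x_1},\dots,\cF_{x_k}\}$ is shattered by $\cF^{**}$, giving $\VC(\cF^{**})\ge\VC(\cF)$; combining the displayed inequalities finishes the proof.

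The only real difficulty is bookkeeping: keeping straight which base set each of $\cF$, $\cF^*$, $\cF^{**}$ is a system on, and verifying in each shattering step that the relevant points (the $x_j$, respectively the $\cF_{x_i}$) are genuinely distinct, since VC-dimension counts distinct elements. Whether $\emptyset$ belongs to the various duals is immaterial, as it cannot change any VC-dimension. Since Fact~\ref{fact:duals} is classical, an acceptable alternative is simply to cite \cite{Assouad} and omit the argument.
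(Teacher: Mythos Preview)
Your argument is correct, and in fact goes beyond what the paper does: the paper does not prove Fact~\ref{fact:duals} at all, but simply records it as ``a standard fact (see, e.g., \cite{Assouad})'' and moves on. Your closing remark that citing \cite{Assouad} is an acceptable alternative is thus exactly the route the paper takes.

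A minor point of contact worth noting: the paper, just after Proposition~\ref{prop:SSiso}, sketches that $(\cF^*)^*$ is a quotient of $\cF$ and hence $\VC^*(\cF^*)=\VC(\cF)$, which is precisely the double-dual comparison you prove by hand in your second step (you only need the inequality $\VC(\cF)\le\VC(\cF^{**})$, and you establish it directly rather than via the quotient machinery). So your approach to the second inequality is self-contained, whereas the paper's later remark would give the same conclusion through its quotient framework.
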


The assertions in Proposition \ref{prop:VCvariants} equate the VC-dimension of one set system $\cF_1$ with the dual VC-dimension of another set system $\cF_2$. This arises from a certain equivalence between $\cF_1$ and $\cF^*_2$, which is made rigorous by the following definition. 

\begin{definition}\label{def:quotient}
Let $\cF_1$ be a set system on $X=\bigcup\cF_1$ and let $\cF_2$ be a set system on $Y=\bigcup\cF_2$. Then $\cF_2$ is a \textbf{quotient} of $\cF_1$ if there is a surjective function $\sigma\colon X\to Y$ such that $\cF_1=\sigma\inv(\cF_2)$.
\end{definition}

The following is a straightforward exercise (see \cite[Proposition 2.2]{Assouad}).

\begin{proposition}\label{prop:SSiso}
Let $\cF_1$ be a set system on $X=\bigcup\cF_1$ and let $\cF_2$ be a set system on $Y=\bigcup\cF_2$. If $\cF_2$ is a quotient of $\cF_1$, then $\VC(\cF_1)=\VC(\cF_2)$.
\end{proposition}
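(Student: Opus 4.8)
The plan is to establish the two inequalities $\VC(\cF_2)\le\VC(\cF_1)$ and $\VC(\cF_1)\le\VC(\cF_2)$ separately, in each case transporting a shattered set across the quotient map. So I would fix a surjection $\sigma\colon X\to Y$ with $\cF_1=\sigma\inv(\cF_2)$; thus every member of $\cF_1$ is of the form $\sigma\inv(S)$ for some $S\in\cF_2$, and conversely every such $\sigma\inv(S)$ lies in $\cF_1$.

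For $\VC(\cF_2)\le\VC(\cF_1)$, I would start from a finite set $B\seq Y$ shattered by $\cF_2$, use surjectivity of $\sigma$ to pick a preimage $x_y\in\sigma\inv(y)$ for each $y\in B$, and set $B'=\{x_y:y\in B\}$. The points $x_y$ have pairwise distinct images, so $|B'|=|B|$. Given $A'\seq B'$, write $A'=\{x_y:y\in A\}$ for the corresponding $A\seq B$ and choose $S\in\cF_2$ with $S\cap B=A$; then $\sigma\inv(S)\cap B'=\{x_y:\sigma(x_y)\in S\}=\{x_y:y\in A\}=A'$, so $\cF_1$ shatters $B'$, and hence $\VC(\cF_1)\ge|B|$.

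For $\VC(\cF_1)\le\VC(\cF_2)$, the one new ingredient is that $\sigma$ is injective on any set $B'\seq X$ shattered by $\cF_1$: if $x_1\ne x_2$ in $B'$ had $\sigma(x_1)=\sigma(x_2)$, then a set of the form $\sigma\inv(S)\in\cF_1$ meeting $B'$ exactly in $\{x_1\}$ would also contain $x_2$, since $\sigma(x_2)=\sigma(x_1)\in S$, a contradiction. Granting this, set $B=\sigma(B')$, so $|B|=|B'|$, and for $A\seq B$ choose $S\in\cF_2$ with $\sigma\inv(S)\cap B'=(\sigma|_{B'})\inv(A)$; unwinding the definitions gives $S\cap B=A$, so $\cF_2$ shatters $B$. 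Combining the two inequalities yields $\VC(\cF_1)=\VC(\cF_2)$, and the same reasoning shows that one dimension is infinite precisely when the other is.

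I expect no real obstacle here: the only point requiring a moment's care is the injectivity of $\sigma$ on shattered subsets in the second direction, and everything else is a routine unwinding of the definitions of shattering and of $\sigma\inv(\cF_2)$.
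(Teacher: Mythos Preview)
Your argument is correct and is exactly the standard unwinding one would expect; the paper itself does not spell out a proof but simply records the result as a straightforward exercise with a reference to Assouad. The one nontrivial step you isolate---that $\sigma$ must be injective on any $\cF_1$-shattered set---is the right observation, and the rest is routine.
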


Another informative exercise is to show that if $\cF$ is a set system on $X=\bigcup\cF$ then $(\cF^*)^*$ is a quotient of $\cF$ (and hence $\VC^*(\cF^*)=\VC(\cF)$). Indeed, $(\cF^*)^*$ is the ``smallest" quotient of $\cF$ in the following sense. Define an equivalence relation $\sim$ on $X$ such that $x\sim y$ if and only if $\cS_x=\cS_y$. Then every set in $\cF$ is $\sim$-invariant, which yields a well-defined set system $\cF/\!\!\sim$ on $X/\!\!\sim$. One can then show that $(\cF^*)^*$ and $\cF/\!\!\sim$ are ``equal" in the sense that Definition \ref{def:quotient} holds with a \emph{bijective} function. More generally, if $\cF_2$ is a quotient of $\cF_1$, then $(\cF_1^*)^*$ and $(\cF_2^*)^*$ are equal in this same sense. This justifies the idea that set systems are equivalent to their quotients.

We can now return to groups. It will be convenient to recall Sisask's \cite{SisNIP} variation in its full generality.

\begin{definition}\label{def:SVC}
Let $G$ be a group and fix $A,B\seq G$. Define the set systems 
\[
\cF^\ell(A|B)=\{xA\cap B:x\in BA\inv\}\makebox[.5in]{and} \cF^r(A|B)=\{Ax\cap B:x\in A\inv B\}.
\]
For $\bullet\in\{\ell,r\}$, set 
\[
\dim_{\bullet\!\VC}(A|B)=\VC(\cF^\bullet(A|B))\makebox[.5in]{and} \dim_{\bullet\!\VC}(A)=\dim_{\bullet\!\VC}(A|A).
\]
(In \cite{SisNIP}, Sisask writes $\dim_{\VC}$ rather than $\dim_{\lVC}$; we add the $\ell$ for consistency with the rest of our notation.)
\end{definition}

To ease notation, given  $\bullet\in\{\ell,r\}$, we  let $\cF^{\bullet *}_B(A)=(\cF^\bullet_B(A))^*$ and  $\VC^{\bullet*}_B(A)=\VC^*(\cF^{\bullet}_B(A))$. 

\begin{proposition}
Let $G$ be a group and fix  $A,B\seq G$.
\begin{enumerate}[$(a)$]
\item $\cF^{r*}_{B\inv}(A)$ is a quotient of $\cF^\ell(A|B)$, and hence $\dim_{\lVC}(A|B)=\VC^{r*}_{B\inv}(A)$. In particular,  $\dim_{\lVC}(A)=\VC^{r*}_{A\inv}(A)$. 
\item $\cF^{r*}_G(A)$ is a quotient of $\cF^\ell_G(A)$, and hence $\VC^\ell_G(A)=\VC^{r*}_G(A)$. 
\end{enumerate}
Moreover, the same holds with $\ell$ and $r$ exchanged.
\end{proposition}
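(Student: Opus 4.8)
The plan is to prove both parts by producing explicit quotient maps and then invoking Proposition~\ref{prop:SSiso}, together with the defining identity $\VC^*(\cF)=\VC(\cF^*)$. The structural point is that inversion $x\mapsto x\inv$ is a bijection between the index sets $BA\inv$ and $AB\inv$, and that the map $y\mapsto Ay\inv$ sends a point of $\cF^\ell(A|B)$ (a point of $B$) to a point of $\cF^{r*}_{B\inv}(A)$ (a coset $Az$ with $z\in B\inv$) compatibly with these two indexings. All systems involved degenerate when $A=\emptyset$ or $B=\emptyset$, so we may assume throughout that $A,B\ne\emptyset$.

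For part $(a)$ write $Y=\bigcup\cF^\ell(A|B)\seq B$ and define $\sigma\colon Y\to\cF^r_{B\inv}(A)$ by $\sigma(y)=Ay\inv$, which is legitimate since $y\in B$ forces $y\inv\in B\inv$. The first step is to check that for each $x\in BA\inv$, putting $g=x\inv\in AB\inv$, one has
\[
\sigma\inv\big((\cF^r_{B\inv}(A))_g\big)=xA\cap B .
\]
This comes down to the chain $\sigma(y)\in(\cF^r_{B\inv}(A))_g \Leftrightarrow g\in Ay\inv \Leftrightarrow gy\in A \Leftrightarrow x\inv y\in A \Leftrightarrow y\in xA$, once we observe that $xA\cap B$ is itself one of the sets whose union defines $Y$, so that the constraint $y\in Y$ is automatic. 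Since $g\mapsto g\inv$ is a bijection $AB\inv\to BA\inv$, this yields $\sigma\inv\big(\cF^{r*}_{B\inv}(A)\big)=\cF^\ell(A|B)$. The second step is surjectivity of $\sigma$ onto $\bigcup\cF^{r*}_{B\inv}(A)$: one checks that $\bigcup\cF^{r*}_{B\inv}(A)=\{Az:z\in B\inv\}$ (each such coset meets $AB\inv$), and that $Ab\inv$ is hit for every $b\in B$, because $b\in(ba\inv)A\cap B\seq Y$ for any $a\in A$ while $\sigma(b)=Ab\inv$. Thus $\cF^{r*}_{B\inv}(A)$ is a quotient of $\cF^\ell(A|B)$, and Proposition~\ref{prop:SSiso} gives $\dim_{\lVC}(A|B)=\VC(\cF^\ell(A|B))=\VC(\cF^{r*}_{B\inv}(A))=\VC^{r*}_{B\inv}(A)$; the special case $B=A$ is the ``in particular'' clause.

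Part $(b)$ is then immediate from part $(a)$ with $B=G$, since $\cF^\ell(A|G)=\cF^\ell_G(A)$: indeed $GA\inv=G$ and $xA\cap G=xA$, so the two systems literally coincide, and $G\inv=G$. For the ``$\ell\leftrightarrow r$'' assertions one runs the identical argument with the roles of left and right multiplication interchanged everywhere; equivalently, one applies what has already been proved inside the opposite group $G^{\mathrm{op}}$, noting that a left (resp.\ right) translate in $G^{\mathrm{op}}$ is a right (resp.\ left) translate in $G$, and that inversion is unchanged.

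I expect the one place requiring genuine care to be the bookkeeping around the underlying sets $\bigcup$ of the various systems. One must confirm that $\sigma$ takes values in, and maps \emph{onto}, $\bigcup\cF^{r*}_{B\inv}(A)$ exactly (not a proper sub- or superfamily of cosets), and that $\sigma\inv$ of a member of $\cF^{r*}_{B\inv}(A)$ is precisely a member of $\cF^\ell(A|B)$ rather than merely its intersection with $Y$. Both points collapse once one notes that every $xA\cap B$ with $x\in BA\inv$ is by construction a subset of $Y$, and that inversion identifies the two index sets $BA\inv$ and $AB\inv$; granting these, the rest is a routine unwinding of the definitions of the dual system and of ``quotient''.
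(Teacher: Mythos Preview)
Your proof is correct and follows essentially the same approach as the paper: both define the quotient map $\sigma(y)=Ay\inv$ from the underlying set of $\cF^\ell(A|B)$ to $\cF^r_{B\inv}(A)$, verify $\sigma\inv(\cF_g)=g\inv A\cap B$ for $g\in AB\inv$, and deduce part $(b)$ as the special case $B=G$. Your version is slightly more explicit about the bookkeeping (e.g., checking $Y=B$ and surjectivity via $b\in (ba\inv)A\cap B$), but the underlying argument is identical.
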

\begin{proof}
We first prove part $(a)$. To ease notation, let $\cF=\cF^r_{B\inv}(A)$. If at least one of $A$ or $B$ is empty, then $\cF^\ell(A|B)=\emptyset=\cF^*$. So assume $A$ and $B$ are nonempty. Then we have $\bigcup\cF=AB\inv$, $\bigcup\cF^\ell(A|B)=B$, and $\bigcup\cF^*=\cF$. Define $\sigma\colon B\to \cF$ so that $\sigma(g)=Ag\inv$. Then for $x\in AB\inv$, we have $\sigma\inv(\cF_x)=x\inv A\cap B$, and thus $\sigma\inv(\cF^*)=\cF(A|B)$. Since $\sigma$ is surjective, this shows that $ \cF^*$ is a quotient of $\cF(A|B)$. Proposition \ref{prop:SSiso} then yields $\dim_{\lVC}(A|B)=\VC^{r*}_{B\inv}(A)$. 

For part $(b)$, note that $\cF^\ell(A|G)=\cF^\ell_G(A)$, and so this is a special case of part $(a)$. (Part $(b)$ is also a well-known fact; see, e.g., \cite[p.13]{CPTNIP}, \cite[Corollary 3.19$(b)$]{CPfrNIP}. One can further check that $\cF^{r\ast}_G(A)$ is ``equal" to $\cF^{\ell\ast\ast}_G(A)$ in the sense discussed after Proposition \ref{prop:SSiso}.) 

The proof with $\ell$ and $r$ exchanged is similar (or apply $(a)$ to $A\inv,B\inv$).
\end{proof}

Together with Fact \ref{fact:duals}, the previous proposition establishes the remaining results in Proposition \ref{prop:VCvariants}.

\end{document}